\numberwithin{equation}{theorem}
\newcommand{\D}{\displaystyle}
\renewcommand{\m}{\mathfrak{m}}
\renewcommand{\n}{\mathfrak{n}}
\DeclareMathOperator{\sspan}{span}
\theoremstyle{theorem}
\begin{document}
\title{Finiteness properties of local cohomology for $F$-pure local rings}
\author{Linquan Ma}
\address{Department of Mathematics\\ University of Michigan\\ Ann Arbor\\ Michigan 48109}
\email{lquanma@umich.edu}
\maketitle
\begin{abstract}
In this paper, we show that for an $F$-pure local ring $(R,\m)$, all
local cohomology modules $H_{\m}^i(R)$ have finitely many Frobenius
compatible submodules. This answers positively the open question raised by F. Enescu and M. Hochster in
\cite{MelandFlorianFrobenius} (see also \cite{Florianafinitenessconditon}, where it was stated as a
conjecture when $R$ is Cohen-Macaulay). We also prove that when $(R,\m)$ is excellent and $F$-pure on the punctured spectrum, all local cohomology modules $H_{\m}^i(R)$ have finite length in the category of $R$-modules with Frobenius action. Finally, we show that the property that all $H_\m^i(R)$ have finitely many Frobenius compatible submodules passes to localizations.
\end{abstract}

\section{Introduction}
Let $(R,\m)$ be a local ring of equal characteristic $p>0$ of
dimension $d$. There is a natural action of the Frobenius
endomorphism of $R$ on each of its local cohomology modules
$H_{\m}^i(R)$. We call an $R$-submodule $N$ of $H_{\m}^i(R)$
{\it $F$-compatible} if $F$ maps $N$ into itself. Our main interest is to
understand when a local ring $(R,\m)$ has the property that there
are only finitely many $F$-compatible submodules for each
$H_{\m}^i(R)$, $0\leq i\leq d$. Rings with this property are called {\it FH-finite} and have been studied in detail in \cite{MelandFlorianFrobenius}. It was proved in \cite{MelandFlorianFrobenius} that an $F$-pure Gorenstein ring is FH-finite. This also follows from results in \cite{SharpGradedAnnihilatorsOfModulesOverTheFrobeniusSkewPolynomialRing}. Recently, it was
proved in \cite{Florianafinitenessconditon} that if an $F$-injective
Cohen-Macaulay ring $R$ admits a canonical ideal $I\cong \omega_R$ such that $R/I$ is $F$-pure, then $R$ is
FH-finite. We notice that all these hypothesis imply $R$ is $F$-pure.
In fact, it was asked in \cite{MelandFlorianFrobenius} (and this was later
conjectured in \cite{Florianafinitenessconditon} for Cohen-Macaulay rings) whether the
$F$-pure property itself is enough for FH-finiteness. We provide a
positive answer to this question. We actually prove a stronger
result, which says that $F$-pure will imply {\it stably FH-finite}, this means that $R$ and all power series rings over $R$ are FH-finite.
Our first main result is the following, proved in Section 3:
\begin{theorem}
\label{main theorem 1} Let $(R,\m)$ be an $F$-pure local ring. Then
$R$ and all power series rings over $R$ are FH-finite, that is, $R$ is stably FH-finite.
\end{theorem}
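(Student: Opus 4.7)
The plan is to reduce to the complete $F$-finite case, use Matlis duality to convert $F$-compatible submodules of the local cohomology modules into Cartier submodules of a finitely generated dual module, and then use $F$-purity to produce a surjective (equivalently, nil-free) Cartier structure for which a finiteness theorem applies.

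First, I would reduce to the case that $R$ is complete. The $\m$-adic completion $\widehat R$ is faithfully flat, preserves $F$-purity, and the canonical isomorphism $H_\m^i(R) \cong H_{\m\widehat R}^i(\widehat R)$ is Frobenius-equivariant, so $F$-compatible submodules match up. If necessary I would apply the Hochster--Huneke gamma construction to reduce further to the $F$-finite setting, where $F$-purity is equivalent to $F$-splitting; this faithfully flat purely inseparable base change does not disturb the counting of $F$-compatible submodules. By Cohen's structure theorem write $R = S/I$ for a complete regular local ring $(S,\n)$ of dimension $n$.

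Matlis duality identifies the dual of $H_\m^i(R)$ with the finitely generated $R$-module $M_i := \Ext_S^{n-i}(R,S)$, and the Frobenius on $H_\m^i(R)$ dualizes to a Cartier-like map $\kappa_i \colon F_*M_i \to M_i$. Under this duality, $F$-compatible submodules of $H_\m^i(R)$ correspond bijectively to $\kappa_i$-stable submodules of $M_i$, so it suffices to show that the finitely generated Cartier module $(M_i, \kappa_i)$ has only finitely many Cartier submodules. The key input from $F$-purity is that the splitting $F_*R \twoheadrightarrow R$ (in the $F$-finite setting) dualizes to surjectivity of $\kappa_i$, making $(M_i,\kappa_i)$ a coherent nil-free Cartier module.

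For nil-free coherent Cartier modules, a finiteness theorem in the spirit of Sharp's work on graded annihilators (and its extensions to Cartier modules in the sense of Blickle--B\"ockle--Schwede) gives that the $\kappa_i$-stable submodules of $M_i$ are indexed by a finite collection of ``$F$-compatible'' primes in $\Supp(M_i)$. This yields FH-finiteness of $R$. Because $F$-purity is preserved under adjoining power series variables, the same argument applied to every $R[[t_1,\ldots,t_m]]$ promotes FH-finiteness to stable FH-finiteness.

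The main obstacle is the Cartier-side finiteness step. For the top local cohomology of a Cohen--Macaulay $F$-pure ring, $M_d$ is the canonical module and the statement reduces essentially to Sharp's theorem; but doing it uniformly for every $H_\m^i$ of a non-Cohen--Macaulay $F$-pure ring requires two things that are delicate: first, verifying that $F$-purity of $R$ really forces $\kappa_i$ to be surjective (not just non-degenerate) on the sometimes pathological module $\Ext_S^{n-i}(R,S)$, and second, establishing the finite-lattice statement for nil-free coherent Cartier modules without Cohen--Macaulay hypotheses. The heart of the argument should be the passage from the splitting of $F$ on $R$ to a genuine surjectivity of $\kappa_i$ on each ext-module.
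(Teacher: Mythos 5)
There is a genuine gap, and it sits exactly where you located ``the heart of the argument.'' Surjectivity of the structural map $\kappa_i$ on $M_i=\Ext_S^{n-i}(R,S)$ is precisely the Matlis dual of injectivity of the Frobenius action on $H_\m^i(R)$, i.e.\ of $F$-injectivity of $R$ --- and $F$-injectivity is strictly weaker than what FH-finiteness requires. There is no finiteness theorem asserting that a coherent Cartier module with surjective (nil-free) structural map has only finitely many Cartier submodules: the Blickle--B\"ockle result you would invoke (their Corollary 4.20, used in the proof of Theorem \ref{dual form of main result} here) only bounds the Cartier submodules $N$ satisfying $C(N)=N$, and Sharp's graded-annihilator results are of the same nature. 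The paper's Example \ref{stably FH-finite is stronger than F-injective} (Enescu--Hochster's Example 2.16) is a complete Cohen--Macaulay $F$-injective domain of dimension $1$ that is \emph{not} FH-finite; dualizing, its canonical module is a coherent Cartier module with surjective $\kappa$ possessing infinitely many Cartier submodules. So the key step of your plan, as stated, is false, and deducing ``$\kappa_i$ surjective'' from the splitting $F_*R\twoheadrightarrow R$ cannot be enough, since that conclusion already follows from the weaker hypothesis of $F$-injectivity.

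What you actually need on the dual side is that \emph{every} Cartier submodule $N\subseteq M_i$ satisfies $\kappa_i(N)=N$, equivalently that $F$ acts injectively on every quotient of $H_\m^i(R)$ by an $F$-compatible submodule (anti-nilpotency); extracting this from $F$-purity, rather than from $F$-injectivity, is the real content of the theorem, and your proposal never engages it. The paper does this directly and much more cheaply: after completing (so $F$-pure $=$ $F$-split), it applies a splitting of $R\hookrightarrow S$ with $S=R$ via Frobenius to show that if the image of $y$ lies in the $S$-span of the image of a submodule $N$ of $H_\m^i(R)$ then $y\in N$ (Proposition \ref{key prop}); the Artinian descending-chain argument of Theorem \ref{F-split implies anti-nilpotency} then yields $y\in\sspan_R\langle F(y),F^2(y),\dots\rangle$ for every $y$, which is anti-nilpotency by Lemma \ref{key lemma}, and Theorem \ref{anti-nilpotency is equiv to stably FH finite} converts this into stable FH-finiteness. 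Note that this route needs no $F$-finiteness, no $\Gamma$-construction, and no Matlis duality (your $\Gamma$-reduction is repairable --- finiteness descends by Lemma \ref{gamma construction preserves F-injective}, not because the count is undisturbed --- but it is unnecessary for this theorem). If you insist on the Cartier-module picture, you must still prove the statement ``every Cartier submodule of $M_i$ is $\kappa_i$-surjective'' from $F$-splitting, which is exactly the dual of the anti-nilpotency argument you skipped.
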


We shall also study the problem of determining conditions under which the local cohomology modules $H_\m^i(R)$ have finite length in the category of $R$-modules with Frobenius action. In fact, rings with this property are said to have {\it FH-finite length} and have been studied in \cite{MelandFlorianFrobenius}. Inspired by the theory of {\it Cartier modules} introduced by M. Blickle and G. B\"ocklein in \cite{BlickleBoeckleCartierModulesFiniteness} and the $\Gamma$-construction introduced by M. Hochster and C. Huneke in  \cite{HochsterHunekeFRegularityTestElementsBaseChange}, we obtain our second and third main result, proved in Section 5:
\begin{theorem}
\label{main theorem 2} Let $(R,\m)$ be an excellent local ring such that $R_P$ is $F$-pure for every $P\in \Spec R-\{\m\}$. Then $R$ has FH-finite length.
\end{theorem}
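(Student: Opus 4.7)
The plan is to combine the $\Gamma$-construction of Hochster--Huneke with the Cartier module finiteness theorem of Blickle--B\"ockle, using the preceding main theorem as the crucial input at the $F$-pure localizations $R_P$. First I would apply the $\Gamma$-construction to produce a faithfully flat local extension $R \to R^\Gamma$ with $R^\Gamma$ an $F$-finite excellent local ring, and verify that the hypothesis ``$F$-pure on the punctured spectrum'' ascends to $R^\Gamma$. Since local cohomology commutes with this base change and the lattice of $F$-compatible submodules of $H_{\m}^i(R)$ injects into that of $H_{\m R^\Gamma}^i(R^\Gamma)$ via faithfully flat extension, proving FH-finite length for $R$ reduces to proving FH-finite length for $R^\Gamma$; so I assume henceforth that $R$ is $F$-finite.

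With $R$ now $F$-finite, Matlis duality converts the Artinian module $H_\m^i(R)$ with its Frobenius endomorphism into a finitely generated $\hat R$-module $D_i := \Hom_R(H_\m^i(R), E_R(R/\m))$ equipped with a $p^{-1}$-linear Cartier structure $\kappa_i \: F_* D_i \to D_i$. This sets up an order-reversing bijection between $F$-compatible submodules of $H_\m^i(R)$ and Cartier submodules of $D_i$, so FH-finite length for $H_\m^i(R)$ is equivalent to finite length of $D_i$ in the category of coherent Cartier modules. The key input is then the Blickle--B\"ockle finiteness theorem, which asserts that such a $D_i$ has finite length modulo its maximal nilpotent Cartier submodule $N_i \subseteq D_i$ (equivalently, in the category of Cartier crystals). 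It therefore suffices to prove that $N_i$ itself has finite length as a Cartier module.

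This is where the hypothesis on the punctured spectrum enters. For any prime $P \neq \m$, the $F$-purity of $R_P$ translates, via local duality in the $F$-finite setting, to surjectivity of the localized Cartier map $(\kappa_i)_P$, which precludes a nonzero nilpotent piece of $D_i$ at $P$. Consequently $N_i$ is supported on $\{\m\}$; being finitely generated over $\hat R$, it has finite length as an ordinary module, hence trivially as a Cartier module. Combining this with Blickle--B\"ockle applied to $D_i/N_i$ yields finite length for $D_i$, and therefore FH-finite length for each $H_\m^i(R)$.

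The main obstacle I anticipate is the translation in the last step: showing rigorously that $F$-purity of $R_P$ implies surjectivity of $(\kappa_i)_P$ for \emph{every} $i$, not just for the top local cohomology, without any Cohen--Macaulay hypothesis. In the Gorenstein case the Matlis dual of $H_\m^d(R)$ is $R$ itself with its canonical Cartier structure and the translation is classical; in the general case one must work with the full normalized dualizing complex of $R_P$ and track Frobenius equivariance through its cohomology in each degree. A secondary technical point is verifying that the initial $\Gamma$-construction descent preserves FH-finite \emph{length}, not only FH-finiteness, which calls for a careful analysis of how $F$-compatible chains behave under this specific faithfully flat extension.
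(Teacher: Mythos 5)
Your overall architecture---reduce via the $\Gamma$-construction to the $F$-finite complete case, pass to Matlis duals viewed as coherent Cartier modules, and exploit the Blickle--B\"ockle filtration---is essentially the paper's (which runs $R\to\widehat R\to\widehat R^\Gamma\to\widehat{\widehat R^\Gamma}$ and then invokes Theorem \ref{FH-finite length is equiv to stably FH-finite on punctured spec}), but your key step has a genuine gap. From $F$-purity of $R_P$ you extract only the surjectivity of the localized structural map $(\kappa_i)_P$. Under the duality of Theorem \ref{matlis dual induce equivalence}, surjectivity of $(\kappa_i)_P$ corresponds to injectivity of the Frobenius action on $H^{i-\dim R/P}_{PR_P}(R_P)$, i.e.\ to $F$-injectivity of $R_P$, and that is strictly weaker than what is needed: a surjective Cartier structure rules out nilpotent \emph{quotients}, but not nilpotent submodules or nilpotent factors sandwiched inside the filtration (dually, $F$ can act injectively on $W$ while some quotient $W/V$ is $F$-nilpotent---this is exactly failure of anti-nilpotency, and the paper's Example \ref{stably FH-finite is stronger than F-injective} exhibits an $F$-injective ring that is not FH-finite, whose dual is a coherent Cartier module with surjective structural map but nonzero nilpotent factor). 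As written, your argument would prove the theorem assuming only $F$-injectivity on the punctured spectrum, which is not claimed and which the hypothesis does not reduce to. What is actually needed at each $P\neq\m$ is anti-nilpotency of the local cohomology of $R_P$, equivalently $C_P(N)=N$ for every Cartier submodule $N$ of $(D_i)_P$, and this is precisely what Theorem \ref{main theorem 1} (via Theorem \ref{F-split implies anti-nilpotency}) supplies; you name it as the crucial input in your plan but never use more than $F$-injectivity in the actual argument.

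There is a second, related gap: the reduction ``it suffices that the maximal nilpotent Cartier submodule $N_i$ have finite length'' is not valid. Blickle--B\"ockle give finite length of $D_i$ in the category of Cartier \emph{crystals}; this is not the same as finite length of $D_i/N_i$ as a Cartier \emph{module}. Finite Cartier-module length of $D_i$ is equivalent (Proposition \ref{characterization of anti-nilpotency interms of Lyubeznik's filtration}, dualized as in Remark \ref{correspondence between Cartier filtration and Lyubeznik's filtration}) to all nilpotent factors $M_j/\underline{M_j}$ in the filtration of Theorem \ref{Cartier module filtration} having finite length over $R$; these are subquotients, and only the bottom one is a submodule of $D_i$, so controlling $N_i$ together with crystal-finiteness of $D_i/N_i$ does not control nilpotent factors lying between simple ones (a non-split extension of a nilpotent coherent Cartier module by a simple one can have zero maximal nilpotent submodule). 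The repair for both gaps is the same and is the paper's argument: anti-nilpotency at every $P\neq\m$ forces $(M_j/\underline{M_j})_P=0$ for all $j$, so each nilpotent factor is supported at $\{\m\}$ and has finite length, which is exactly Theorem \ref{FH-finite length is equiv to stably FH-finite on punctured spec}. Finally, two technical points you flagged do need care: the $\Gamma$-construction requires a complete ring and $\widehat R^\Gamma$ is again not complete, so one completes both before and after it; $F$-purity on the punctured spectrum must be shown to ascend along these faithfully flat maps (Propositions \ref{F-pure preserved under flat map with regular fibres} and \ref{use gamma construction to get F-pure on punctured spec}); and descent of FH-finite length back to $R$ uses Lemma \ref{gamma construction preserves F-injective}, not merely an injection of lattices of $F$-compatible submodules.
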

\begin{theorem}
\label{main theorem 3}
Let $(R,\m)$ be a local ring that has FH-finite length (resp. is FH-finite or stably FH-finite). Then the same holds for $R_P$ for every $P\in \Spec{R}$.
\end{theorem}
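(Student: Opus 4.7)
The plan is, first, to reduce to the case where both $R$ and $R_P$ are complete. The $\m$-adic completion $R\to\widehat R$ is faithfully flat, commutes with $H_\m^i$, and preserves the Frobenius action, so it induces a bijection between the lattices of $F$-compatible submodules of $H_\m^i(R)$ and $H_{\m\widehat R}^i(\widehat R)$ and preserves length in the $R[F]$-category. The same applies to $R_P\to\widehat{R_P}$. Hence it suffices to establish the three finiteness properties for $\widehat{R_P}$ under the assumption that $R$ is complete.

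Next, I would transfer finiteness from $R$ (or from a suitable power series ring over $R$) down to $\widehat{R_P}$ via a change-of-ideal spectral sequence. Set $t=\dim(R/P)$ and pick $y_1,\ldots,y_t\in\m$ whose images in $R/P$ form a system of parameters, so $\m=\sqrt{(P,y_1,\ldots,y_t)}$. The Grothendieck spectral sequence
\[
E_2^{p,q}=H_{(y_1,\ldots,y_t)}^p\bigl(H_P^q(R)\bigr)\Longrightarrow H_\m^{p+q}(R)
\]
is compatible with the Frobenius action, and $\bigl(H_P^q(R)\bigr)_P=H_{PR_P}^q(R_P)$. Thus $F$-compatible submodules of $H_{PR_P}^q(R_P)$ come from $F$-compatible submodules of $H_P^q(R)$ via localization, which in turn are controlled by the spectral sequence in terms of $F$-compatible subquotients of the abutment $H_\m^{p+q}(R)$. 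For the stably FH-finite assertion, one runs the same argument with $R[[x_1,\ldots,x_n]]$ replacing $R$, using that this ring is FH-finite by the stable hypothesis and that $R_P[[x_1,\ldots,x_n]]$ arises as a localization (then completion) of it.

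The main obstacle is ensuring that the transfer through the spectral sequence is injective (or length-preserving) on $F$-compatible submodules, so that finiteness on the $R$-side genuinely implies finiteness on the $R_P$-side. The most promising strategy is to use the Enescu--Hochster equivalence between FH-finiteness and anti-nilpotency of every $H_\m^i(R)$, and then prove directly that anti-nilpotency is inherited by $H_{PR_P}^i(R_P)$: anti-nilpotency is a condition on subquotients, which should descend through the spectral sequence and the flat map $R\to R_P$. For FH-finite length one additionally needs to count composition factors through the spectral sequence, which should be possible because each page carries a compatible Frobenius and the differentials are $F$-linear; an actual length bound then follows from bounding the lengths of $H_\m^{p+q}(R)$ in the $R[F]$-category.
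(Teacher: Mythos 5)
There is a genuine gap at exactly the point you flag as ``the main obstacle,'' and it is not a technical detail but the entire content of the theorem. In the composite-functor spectral sequence $E_2^{p,q}=H^p_{(y_1,\dots,y_t)}\bigl(H^q_P(R)\bigr)\Rightarrow H^{p+q}_\m(R)$, subquotient information flows from the $E_2$-page to the abutment: the abutment acquires a filtration whose graded pieces are subquotients of $E_2$-terms, not the reverse. So knowing that $H^{p+q}_\m(R)$ has finitely many $F$-compatible submodules, or is anti-nilpotent, or has finite $R\{F\}$-length, gives no control whatsoever over the $F$-compatible submodules of $H^q_P(R)$, hence none over those of $H^q_{PR_P}(R_P)=(H^q_P(R))_P$. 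Worse, $H^q_P(R)$ is in general neither Artinian nor Noetherian over $R$, so none of the structural tools that make the finiteness statements tractable (Lyubeznik's filtration, Theorem \ref{Lyubeznik's result}, or its Cartier-module dual via Theorem \ref{matlis dual induce equivalence}) applies to it; your assertion that anti-nilpotency ``should descend through the spectral sequence'' is precisely what would need to be proved, and there is no mechanism in the proposal for doing so. A further missing ingredient: reducing to $R$ complete does not make $R$ $F$-finite, and the duality between Artinian $R\{F\}$-modules and Noetherian Cartier modules that one needs to localize requires $F$-finiteness; your outline has no counterpart to this step.

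The paper avoids $H^q_P(R)$ entirely. It first reduces (via completion and the $\Gamma$-construction, using Proposition \ref{FH-finite length preserved under gamma construction} and Lemma \ref{gamma construction preserves simple}) to a complete $F$-finite ring $B$ with FH-finite length. The key point, Theorem \ref{FH-finite length is equiv to stably FH-finite on punctured spec}, is that for such $B$ one Matlis-dualizes Lyubeznik's filtration of $H^i_{\m}(B)$ into a filtration by Noetherian Cartier modules, localizes these at $Q$ (Noetherian Cartier modules localize well, Lemma \ref{simple cartier module localize}), and dualizes back over $B_Q$; the correct object on the localized side is $H^{\,i-\dim B/Q}_{QB_Q}(B_Q)\cong(H^i_\m(B)^{\vee})_Q^{\vee_Q}$, obtained by this double dual rather than by any direct localization of local cohomology. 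This shows $B_Q$ is stably FH-finite for all $Q\neq\m B$, and finally the split-descent result, Corollary \ref{FH-finite for split couple}, applied to the pure (hence, after completing, split) map $\widehat{R_P}\rightarrow\widehat{B_Q}$ with $PB_Q$ primary to $QB_Q$, transfers stable FH-finiteness down to $\widehat{R_P}$ and hence to $R_P$. Your proposal contains no analogue of the duality/localization step or of the split descent, so as it stands it does not yield the theorem.
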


Throughout this paper we always assume $(R,\m)$ is a Noetherian local
ring of equal characteristic $p>0$, except at the beginning of
Section 3 and the end of Section 4, where the hypothesis will be stated clearly. In Section 2 we
start with some definitions and properties of $F$-pure and $F$-split
rings, and we recall some theorems on the FH-finiteness and
anti-nilpotency of $H_\m^i(R)$ proved in \cite{MelandFlorianFrobenius}. In Section 3 we prove Theorem
\ref{main theorem 1}. In Section 4 we prove a key theorem, Theorem \ref{FH-finite length is equiv to stably FH-finite on punctured spec} which implies both Theorem \ref{main theorem 2} and Theorem \ref{main theorem 3} when $R$ is complete and $F$-finite and we also prove the dual form of Theorem \ref{main theorem 1} and Theorem \ref{main theorem 2} in terms of Cartier modules. In Section 5, we make use of the $\Gamma$-construction in \cite{HochsterHunekeFRegularityTestElementsBaseChange} to prove Theorem \ref{main theorem 2} and Theorem \ref{main theorem 3} in full generality. Finally, in Section 6, we study some examples. Some of our techniques are inspired by the work in \cite{SinghandUlilocalcohomologyandpure}.

\section{FH-finiteness and anti-nilpotency}
Recall that a map of $R$-modules $N\rightarrow N'$ is {\it pure} if
for every $R$-module $M$ the map $N\otimes_RM\rightarrow
N'\otimes_RM$ is injective. This implies that $N\rightarrow N'$ is
injective, and is weaker than the condition that $0\rightarrow
N\rightarrow N'$ be split. A local ring $(R,\m)$ is called {\it $F$-pure}
(respectively, {\it $F$-split}) if the Frobenius endomorphism $F$:
$R\rightarrow R$ is pure (respectively, split). Evidently, an
$F$-split ring is $F$-pure and an $F$-pure ring is reduced. When $R$
is either $F$-finite or complete, $F$-pure and $F$-split are
equivalent ({\it cf}. Discussion 2.6 in
\cite{MelandFlorianFrobenius}).

We will use some notations introduced in
\cite{MelandFlorianFrobenius}. We say an $R$-module $M$ is an
{\it $R\{F\}$-module} if there is a Frobenius action $F$: $M\rightarrow
M$ such that for all $u\in M$, $F(ru)=r^pu$. This is same as saying
that $M$ is a left module over the ring $R\{F\}$, which may be viewed as
a noncommutative ring generated over $R$ by the symbols
$1,F,F^2,\dots$ by requiring that $Fr=r^pF$ for $r\in R$. We say
$N$ is an {\it $F$-compatible} submodule of $M$ if $F(N)\subseteq N$. Note
that this is equivalent to requiring that $N$ be an
$R\{F\}$-submodule of $M$. We say an $R\{F\}$-module $M$ is {\it $F$-nilpotent} if $F^e(M)=0$ for some $e$.

The Frobenius endomorphism on $R$ induces a natural Frobenius action
on each local cohomology module $H_\m^i(R)$ (see Discussion 2.2 and
2.4 in \cite{MelandFlorianFrobenius} for a detailed explanation of
this). We say a local ring is {\it $F$-injective} if $F$ acts injectively
on all of the local cohomology modules of $R$ with support in $\m$.
This holds if $R$ is $F$-pure.
\begin{definition}[{\it cf.} Definition 2.5 in \cite{MelandFlorianFrobenius}]
A local ring $(R,\m)$ of dimension $d$ is called {\it FH-finite} if for
all $0\leq i\leq d$, there are only finitely many $F$-compatible
submodules of $H_\m^i(R)$. We say $(R,\m)$ has {\it FH-finite length} if for each $0\leq i\leq d$, $H_\m^i(R)$ has finite length in the category of $R\{F\}$-modules.
\end{definition}

In \cite{MelandFlorianFrobenius}, F. Enescu and M. Hochster also
introduced the anti-nilpotency condition for $R\{F\}$-modules, which
turns out to be very useful and has close connections with the
finiteness properties of local cohomology modules. In fact, it is
proved in \cite{MelandFlorianFrobenius} that the anti-nilpotency of
$H_\m^i(R)$ for all $i$ is equivalent to the condition that all power series rings
over $R$ be FH-finite.
\begin{definition}
Let $(R,\m)$ be a local ring and let $W$ be an $R\{F\}$-module. We
say $W$ is {\it anti-nilpotent} if for every $F$-compatible submodule
$V\subseteq W$, $F$ acts injectively on $W/V$.
\end{definition}
\begin{theorem}[{\it cf.} Theorem 4.15 in \cite{MelandFlorianFrobenius}]
\label{anti-nilpotency is equiv to stably FH finite} Let $(R,\m)$ be
a local ring and let $x_1,\dots, x_n$ be formal power series
indeterminates over $R$. Let $R_0=R$ and $R_n=R[[x_1,\dots, x_n]]$.
Then the following conditions on $R$ are equivalent:
\begin{enumerate}
\item All local cohomology modules $H_\m^i(R)$ are anti-nilpotent.
\item The ring $R_n$ is FH-finite for every $n$.
\item $R_1\cong R[[x]]$ has FH-finite length.
\end{enumerate}
When $R$ satisfies these equivalent conditions, we call it {\it
stably} FH-finite.
\end{theorem}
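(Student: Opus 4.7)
The strategy is to prove the cycle $(2) \Rightarrow (3) \Rightarrow (1) \Rightarrow (2)$. Throughout I would rely on the following explicit Frobenius-equivariant description: combining flat base change $H_{\m R_n}^{\bullet}(R_n) \cong H_\m^{\bullet}(R)\otimes_R R_n$ with the spectral sequence attached to $\m_n = \m R_n + (x_1,\dots,x_n)$ (only the $q=n$ column survives, since $x_1,\dots,x_n$ is a regular sequence on $R_n$),
$$H_{\m_n}^{i+n}(R_n) \;\cong\; H_\m^i(R)\otimes_R H_{(x_1,\dots,x_n)}^n(R_n),$$
where the right factor has $R$-basis $\{x_1^{-a_1}\cdots x_n^{-a_n} : a_j\ge 1\}$ and the Frobenius acts diagonally via $F(u\otimes x_1^{-a_1}\cdots x_n^{-a_n}) = F(u)\otimes x_1^{-pa_1}\cdots x_n^{-pa_n}$.

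The implication $(2) \Rightarrow (3)$ is immediate, since finitely many $F$-compatible submodules a fortiori gives finite length in the $R\{F\}$-module category. For $(3) \Rightarrow (1)$ I would argue by contrapositive: suppose $H_\m^i(R)$ is not anti-nilpotent, so some $F$-compatible $V\subseteq H_\m^i(R)$ admits $u\in H_\m^i(R)\setminus V$ with $F(u)\in V$. Define
$$W_k \;:=\; V\otimes_R H_{(x)}^1(R[[x]]) \;+\; \sum_{j=1}^{k} R\cdot(u\otimes x^{-j}) \;\subseteq\; H_{\m_1}^{i+1}(R[[x]]).$$
Each $W_k$ is $x$-stable (as $x\cdot(u\otimes x^{-j}) = u\otimes x^{-j+1}$) and $F$-stable (as $F(u\otimes x^{-j}) = F(u)\otimes x^{-pj} \in V\otimes H_{(x)}^1(R[[x]])$), hence an $R[[x]]\{F\}$-submodule. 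Since $u\notin V$, the element $u\otimes x^{-(k+1)}$ lies outside $W_k$, producing an infinite strictly ascending chain $W_1\subsetneq W_2\subsetneq\cdots$ and contradicting finite length.

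The hard implication is $(1) \Rightarrow (2)$. My plan is in two steps: first lift anti-nilpotency from $H_\m^i(R)$ to $H_{\m_n}^{i+n}(R_n)$ using the explicit description above; then show that an Artinian anti-nilpotent $R\{F\}$-module has only finitely many $F$-compatible submodules via the Hartshorne--Speiser--Lyubeznik structural decomposition and Sharp's analysis of graded annihilators over the Frobenius skew polynomial ring. The main obstacle lies in the first step: since the Frobenius scales each $x_j$-exponent by $p$ and thus moves nonlinearly between pole strata, reconstructing an arbitrary $F$-compatible submodule $W\subseteq H_{\m_n}^{i+n}(R_n)$ from its ``leading'' data requires using anti-nilpotent $F$-injectivity repeatedly to propagate membership between strata; carrying out this combinatorial bookkeeping cleanly, and verifying that the resulting invariants live in the finite lattice of $F$-compatible submodules of $H_\m^i(R)$, is the technical heart of the argument.
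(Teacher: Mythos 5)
First, note that the paper you are comparing against does not prove this statement at all: it is quoted as background (Theorem 4.15 of Enescu--Hochster \cite{MelandFlorianFrobenius}), and the present paper uses it as a black box. So your attempt has to be judged as a self-contained proof, and as such it has a genuine gap. Your implications $(2)\Rightarrow(3)$ and $(3)\Rightarrow(1)$ are fine: the identification $H_{\m_n}^{i+n}(R_n)\cong H_\m^i(R)\otimes_R H_{(x_1,\dots,x_n)}^n(R_n)$ with diagonal Frobenius is correct, and your chain $W_1\subsetneq W_2\subsetneq\cdots$ (with $u\otimes x^{-(k+1)}\notin W_k$ because $u\notin V$, read off from the $x$-graded pieces) is a complete and correct way to contradict FH-finite length of $R[[x]]$; this is essentially the same construction used in the cited source.

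The gap is the implication $(1)\Rightarrow(2)$, which you only outline. Both halves of your plan are missing. (a) The ascent of anti-nilpotency from $H_\m^i(R)$ to $H_{\m_n}^{i+n}(R_n)$ is exactly the step you concede you cannot yet carry out (``the technical heart''); as written there is no argument, and your proposed route of reconstructing an \emph{arbitrary} $F$-compatible submodule of $H_{\m_n}^{i+n}(R_n)$ from leading data is harder than necessary. A cleaner route is the elementwise criterion used in this paper (Lemma \ref{key lemma}): $W$ is anti-nilpotent iff every $y$ lies in $\sspan_R\langle F(y),F^2(y),\dots\rangle$; for $y=\sum_a u_a\otimes x^{-a}$ one can exploit that multiplication by monomials $x^b$ isolates and shifts pole strata (e.g.\ $x^{(p^e-1)a}\cdot(F^e(u_a)\otimes x^{-p^ea})=F^e(u_a)\otimes x^{-a}$), so no analysis of arbitrary submodules is needed. (b) Your second step, ``an Artinian anti-nilpotent $R\{F\}$-module has only finitely many $F$-compatible submodules,'' is itself the substantive finiteness theorem underlying Enescu--Hochster's Theorem 4.15; invoking ``the Hartshorne--Speiser--Lyubeznik decomposition and Sharp's graded annihilators'' is not a proof, since Sharp's finiteness statement concerns special annihilator submodules of $x$-torsion-free modules, and one still must argue that anti-nilpotency forces every $F$-compatible submodule into that finite family (or give the Enescu--Hochster argument via the filtration of Theorem \ref{Lyubeznik's result}). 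Until (a) and (b) are actually proved, the hard direction of the equivalence is not established.
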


We will also need some results in \cite{BlickleBoeckleCartierModulesFiniteness} about Cartier modules. We recall some definitions in \cite{BlickleBoeckleCartierModulesFiniteness}. The definitions and results in \cite{BlickleBoeckleCartierModulesFiniteness} work for schemes and sheaves, but we will only give the corresponding definitions for local rings for simplicity (we will not use the results on schemes and sheaves).
\begin{definition}
A {\it Cartier module} over $R$ is an $R$-module equipped with a $p^{-1}$ linear map $C_M$: $M\rightarrow M$, that is an additive map satisfying $C(r^px)=rC(x)$ for every $r\in R$ and $x\in M$. A Cartier module $(M,C)$ is called {\it nilpotent} if $C^{e}(M)=0$ for some $e$.
\end{definition}
\begin{remark}
\begin{enumerate}
\item A Cartier module is nothing but a right module over the ring $R\{F\}$ (see \cite{SharpandYoshinoRightandleftmodulesoverFrobeniuspolynomialring} for corresponding properties of right $R\{F\}$-modules).
\item If $(M,C)$ is a Cartier module, then $C_P$: $M_P\rightarrow M_P$ defined by \[C_P(\frac{x}{r})=\frac{C(r^{p-1}x)}{r}\] for every $x\in M$ and $r\in R-P$ gives $M_P$ a Cartier module structure over $R_P$.
\end{enumerate}
\end{remark}
We end this section with a simple lemma which will reduce most problems to the $F$-split case (recall that for complete local rings, $F$-pure is equivalent to $F$-split).
\begin{lemma}
\label{reduce to F-split} Let $(R,\m)$ be a local ring. Then $R$ has FH-finite length (resp. is FH-finite or stably FH-finite) if and only if $\widehat{R}$ has FH-finite length (resp. is FH-finite or stably FH-finite).
\end{lemma}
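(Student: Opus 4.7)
The plan is to reduce all three statements to the single observation that local cohomology is insensitive to $\m$-adic completion, and to verify this reduction preserves $F$-compatibility of submodules. The key input is the natural isomorphism $H_\m^i(R) \cong H_\m^i(\widehat R)$ (which holds because $H_\m^i$ commutes with the flat base change $R \to \widehat R$ and because $\m \widehat R$ is the maximal ideal of $\widehat R$). This isomorphism is compatible with the Frobenius actions on both sides, since the Frobenius on local cohomology is functorial in the ring and the map $R \to \widehat R$ is a ring homomorphism intertwining Frobenius.

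The central step is the lattice identification. Each $H_\m^i(R)$ is $\m$-torsion, so for any element $x$ annihilated by $\m^n$ and any $\widehat r \in \widehat R$, one may write $\widehat r = r + \delta$ with $r \in R$ and $\delta \in \m^n \widehat R$; this forces $\widehat r \cdot x = r \cdot x$. Consequently, any $R$-submodule $N \subseteq H_\m^i(R)$ is automatically closed under the $\widehat R$-action that $H_\m^i(\widehat R)$ carries, and conversely. Hence the lattice of $R$-submodules of $H_\m^i(R)$ agrees with the lattice of $\widehat R$-submodules of $H_\m^i(\widehat R)$ under the above isomorphism, and since the Frobenius action is the same on both sides, the lattices of $F$-compatible submodules agree. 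In particular $H_\m^i(R)$ is $R\{F\}$-isomorphic to $H_\m^i(\widehat R)$ as an abstract left module over a ring acting through $R\{F\} \to \widehat R\{F\}$, giving equality of the collections of $F$-compatible submodules. This immediately proves the FH-finite and FH-finite length equivalences (for the latter, the lattice of $R\{F\}$-submodules is the same, so having a finite composition series is the same condition).

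For the stably FH-finite assertion, I would observe that the completion of $S := R[[x_1,\ldots,x_n]]$ at its maximal ideal $(\m, x_1,\ldots,x_n)$ is precisely $\widehat R[[x_1,\ldots,x_n]]$. Thus by the equivalence already proved (applied to $S$ in place of $R$), the power series ring $R[[x_1,\ldots,x_n]]$ is FH-finite if and only if $\widehat R[[x_1,\ldots,x_n]]$ is FH-finite. Running this over all $n \geq 0$ gives the stably FH-finite equivalence.

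The only mild subtlety is the verification that the Frobenius-equivariant identification $H_\m^i(R) \cong H_\m^i(\widehat R)$ really does carry the $R$-module structure into the restriction of the $\widehat R$-module structure; but this is immediate from functoriality of local cohomology along $R \to \widehat R$, and once in place the argument is essentially a formal lattice comparison. I do not anticipate any serious obstacle here.
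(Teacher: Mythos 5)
Your proposal is correct and follows essentially the same route as the paper, whose proof is the one-line observation that completion changes neither the local cohomology modules nor the Frobenius action; your lattice identification (via $\m$-torsion) and the identity $\widehat{R[[x_1,\dots,x_n]]}\cong \widehat R[[x_1,\dots,x_n]]$ simply make that observation explicit. No issues.
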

\begin{proof}
Since completion does not affect either what the local cohomology modules are nor what the action of Frobenius is, the lemma follows immediately.
\end{proof}

\section{$F$-pure implies stably FH-finite}
In order to prove the main result, we begin with some simple Lemmas
\ref{basic lemma}, \ref{lemma 1}, \ref{lemma 2} and a Proposition
\ref{key prop} which are characteristic free. In fact, in all these
lemmas we only need to assume $I$ is a finitely generated ideal in a (possibly non-Noetherian) ring $R$ so
that the \v{C}ech complex characterization of local cohomology can
be applied (the proof will be the same). However, we only state these results when $R$ is Noetherian.
\begin{lemma}
\label{basic lemma} Let $R$ be a Noetherian ring, $I$ be an ideal of
$R$ and $M$ be any $R$-module. We have a natural map:
\begin{equation*}
M\otimes_RH_I^i(R)\xrightarrow{\phi} H_I^i(M)
\end{equation*}
Moreover, when $M=S$ is an $R$-algebra, $\phi$ is $S$-linear.
\end{lemma}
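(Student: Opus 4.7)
The plan is to realize both local cohomology modules via the \v{C}ech complex and then exploit the obvious natural transformation from ``tensor, then take cohomology'' to ``take cohomology, then tensor.''  Choose generators $x_1,\dots,x_n$ of $I$ and let $C^\bullet=C^\bullet(\underline x;R)$ denote the \v{C}ech complex of $R$ on these generators, whose $i$-th term is a direct sum of localizations $R_{x_{j_1}\cdots x_{j_i}}$.  Since localization commutes with tensor product, for any $R$-module $N$ we have a canonical isomorphism of complexes $N\otimes_R C^\bullet\cong C^\bullet(\underline x; N)$, and hence $H_I^i(N)\cong H^i(N\otimes_R C^\bullet)$.

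The first main step is a purely homological observation: for any cochain complex $D^\bullet$ of $R$-modules with differential $\delta$, and any $R$-module $M$, there is a natural $R$-linear map
\[
M\otimes_R H^i(D^\bullet)\longrightarrow H^i(M\otimes_R D^\bullet),\qquad m\otimes [d]\mapsto [m\otimes d].
\]
This is well defined since the differential on $M\otimes_R D^\bullet$ is $\id_M\otimes\delta$, so cocycles map to cocycles, and if $d=\delta(d')$ is a coboundary then $m\otimes d=(\id_M\otimes\delta)(m\otimes d')$ is also a coboundary.  (Flatness of $M$ is not needed for existence, only for this map to be an isomorphism.)

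Applying this construction with $D^\bullet=C^\bullet$ and composing with the identification above gives the desired natural map
\[
\phi\colon M\otimes_R H_I^i(R)\longrightarrow H^i(M\otimes_R C^\bullet)\cong H_I^i(M).
\]
For the second assertion, suppose $M=S$ is an $R$-algebra.  Then each term $S\otimes_R R_{x_{j_1}\cdots x_{j_i}}\cong S_{x_{j_1}\cdots x_{j_i}}$ carries its canonical $S$-module structure, the differentials of $S\otimes_R C^\bullet$ are $S$-linear, and the induced action on $H^i(S\otimes_R C^\bullet)=H_I^i(S)$ is the standard $S$-module structure on local cohomology.  Chasing the formula $\phi(s\otimes[z])=[s\otimes z]$ shows that multiplication by any $s'\in S$ commutes with $\phi$ on both sides, proving $S$-linearity.

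There is no serious obstacle here: the content is just the unpacking of the \v{C}ech description of local cohomology together with the standard natural transformation $M\otimes_R H^i(-)\Rightarrow H^i(M\otimes_R -)$; the only point demanding a touch of care is verifying that the $S$-module structure coming from the tensor product on the left matches the intrinsic $S$-module structure on $H_I^i(S)$ on the right, which is immediate from the termwise description of $\phi$.
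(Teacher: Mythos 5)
Your argument is correct and is essentially the paper's own proof: both realize $H_I^i(-)$ via the \v{C}ech complex and use the canonical map $m\otimes[z]\mapsto[m\otimes z]$ from the tensor product with cohomology to the cohomology of the tensored complex (the paper states this for a three-term fragment $L_1\to L_2\to L_3$, you for the whole complex, which is the same observation). Your extra check that the termwise formula gives the $S$-linearity is a fine spelling-out of what the paper leaves implicit.
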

\begin{proof}
Given maps of $R$-modules $L_1\xrightarrow{\alpha}
L_2\xrightarrow{\beta} L_3$ and $M\otimes_R
L_1\xrightarrow{id\otimes\alpha}
M\otimes_RL_2\xrightarrow{id\otimes\beta} M\otimes_RL_3$ such that
$\beta\circ\alpha=0$, there is a natural map:
\begin{equation*}
M\otimes_R \D\frac{\ker\beta}{\im\alpha}\rightarrow
\frac{\ker(id\otimes\beta)}{\im(id\otimes\alpha)}
\end{equation*}
sending $m\otimes \overline{z}$ to $\overline{m\otimes z}$. Now the
result follows immediately by the \v{C}ech complex characterization
of local cohomology.
\end{proof}
\begin{lemma}
\label{lemma 1} Let $R$ be a Noetherian ring, $S$ be an $R$-algebra,
and $I$ be an ideal of $R$. We have a commutative diagram:
\[  \xymatrix{
    &     S\otimes_RH_{I}^i(R)  \ar[d]^{\phi}   \\
H_{I}^i(R)    \ar[ur]^{j_2}  \ar[r]^-{j_1} & H_{I S}^i(S) 
} \] where $j_1, j_2$ are the natural maps induced by $R\rightarrow
S$, in particular $j_2$ sends $z$ to $1\otimes z$.
\end{lemma}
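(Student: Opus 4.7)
The plan is to trace everything through the \v{C}ech complex description of local cohomology, where all three maps have transparent formulas. Choose generators $f_1,\dots, f_n$ of $I$ and write $\check{C}^\bullet(\underline{f}; R)$ for the corresponding \v{C}ech complex of $R$, so that $H^i_I(R)\cong H^i(\check{C}^\bullet(\underline{f}; R))$. Because $S\otimes_R R_{f_{i_1}\cdots f_{i_k}} = S_{f_{i_1}\cdots f_{i_k}}$ and tensor product commutes with finite direct sums, there is a natural isomorphism of complexes $S\otimes_R \check{C}^\bullet(\underline{f}; R)\cong \check{C}^\bullet(\underline{f}; S)$, whose cohomology is $H_{IS}^i(S)$.

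Under these identifications the three maps become very concrete. The map $j_1$ is induced on cohomology by the map of complexes $\check{C}^\bullet(\underline{f}; R)\to \check{C}^\bullet(\underline{f}; S)$ sending $r$ to $1\otimes r$ (i.e., the image of $r$ under $R\to S$). The map $j_2$ is just $\overline{z}\mapsto 1\otimes\overline{z}$. Finally, by the proof of Lemma \ref{basic lemma} applied to the \v{C}ech complex, $\phi$ is the map $s\otimes \overline{z}\mapsto \overline{s\otimes z}$, where on the right we use the identification $S\otimes_R\check{C}^\bullet(\underline{f}; R)\cong \check{C}^\bullet(\underline{f}; S)$ to view $s\otimes z$ as a cocycle in $\check{C}^i(\underline{f}; S)$.

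Given a class $\overline{z}\in H_I^i(R)$ represented by a cocycle $z\in \check{C}^i(\underline{f}; R)$, we then compute
\[ \phi(j_2(\overline{z})) = \phi(1\otimes \overline{z}) = \overline{1\otimes z}, \]
which is precisely $j_1(\overline{z})$ once $1\otimes z\in S\otimes_R \check{C}^i(\underline{f}; R)$ is identified with its image in $\check{C}^i(\underline{f}; S)$. This proves commutativity of the triangle, and the $S$-linearity of $\phi$ asserted in Lemma \ref{basic lemma} makes the entire diagram a diagram of $S$-modules.

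There is essentially no obstacle here; the only thing to check is that the identification $S\otimes_R \check{C}^\bullet(\underline{f}; R)\cong \check{C}^\bullet(\underline{f}; S)$ is compatible with the differentials, which is immediate since each \v{C}ech differential is an $R$-linear combination of the canonical localization maps $R_{f_{i_1}\cdots f_{i_k}}\to R_{f_{i_1}\cdots f_{i_{k+1}}}$. The content of the lemma is simply that both routes around the triangle are induced, at the cocycle level, by the single formula $z\mapsto 1\otimes z$.
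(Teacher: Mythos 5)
Your proof is correct, and it is exactly the verification the paper leaves implicit: the paper's own proof of this lemma is just ``straightforward to check,'' with the intended route being the \v{C}ech complex description already used in Lemma \ref{basic lemma}. Your explicit identification $S\otimes_R\check{C}^\bullet(\underline{f};R)\cong\check{C}^\bullet(\underline{f};S)$ and the cocycle-level computation $\phi(1\otimes\overline{z})=\overline{1\otimes z}=j_1(\overline{z})$ fill in those details faithfully.
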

\begin{proof}
This is straightforward to check.
\end{proof}
\begin{lemma}
\label{lemma 2} Let $R$ be a Noetherian ring and $S$ be an
$R$-algebra such that the inclusion $\iota$: $R\hookrightarrow S$
splits. Let $\gamma$ be the splitting $S\rightarrow
R$. Then we have a commutative diagram:
\[  \xymatrix{
    &     S\otimes_RH_{I}^i(R)  \ar[dl]_{q_2}   \ar[d]^{\phi}   \\
H_{I}^i(R)     & H_{I S}^i(S) \ar[l]_{q_1} 
} \] where $q_1, q_2$ are induced by $\gamma$, in particular $q_2$
sends $s\otimes z$ to $\gamma(s)z$.
\end{lemma}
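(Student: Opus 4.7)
The plan is to verify the equality $q_1\circ\phi = q_2$ by chasing an element through the \v{C}ech complex description of local cohomology, essentially in the same spirit as the proofs of Lemma \ref{basic lemma} and Lemma \ref{lemma 1}. Choose generators $f_1,\dots,f_n$ of $I$ and let $\check C^\bullet(R;\underline{f})$ denote the associated \v{C}ech complex, so that $H_I^i(R)=H^i(\check C^\bullet(R;\underline{f}))$. Since the \v{C}ech complex is built from localizations and localization commutes with $\otimes_R$, we may identify $\check C^\bullet(S;\underline{f})$ with $S\otimes_R \check C^\bullet(R;\underline{f})$, and under this identification $\phi$ is the map on cohomology induced by the level-wise tensor, sending a representative $s\otimes \bar z$ (with $\bar z$ a cocycle) to the class of $s\bar z$ in $H_{IS}^i(S)$.

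Next, because $\gamma\: S\to R$ is $R$-linear, it induces $R$-linear maps on each iterated localization $S_{f_{i_1}\cdots f_{i_k}}\to R_{f_{i_1}\cdots f_{i_k}}$ and hence a map of \v{C}ech complexes $\check C^\bullet(S;\underline{f})\to\check C^\bullet(R;\underline{f})$ whose induced map on cohomology is exactly $q_1$. Given $s\otimes z\in S\otimes_R H_I^i(R)$ with $z=[\bar z]$, we then compute
\[
q_1\bigl(\phi(s\otimes z)\bigr)=\bigl[\gamma(s\bar z)\bigr]=\bigl[\gamma(s)\,\bar z\bigr]=\gamma(s)\cdot z = q_2(s\otimes z),
\]
where the middle equality uses that each entry of $\bar z$ already lies in (the image of) $R$, so $\gamma(s\bar z)=\gamma(s)\bar z$ by $R$-linearity of $\gamma$.

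There is no substantive obstacle in this argument; it is a direct diagram chase on the \v{C}ech complex. The only point requiring mild care is the identification of $\bar z$ as a cocycle of $\check C^\bullet(S;\underline f)$ via $\iota$ when applying $\gamma$, which is exactly where the splitting hypothesis $\gamma\circ\iota = \id_R$ intervenes to ensure that $\gamma$ acts as the identity on entries coming from $R$.
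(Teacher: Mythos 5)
Your proof is correct, but it takes a genuinely different route from the paper's. You verify $q_1\circ\phi=q_2$ by an explicit cocycle chase: identifying $\check C^\bullet(S;\underline{f})$ with $S\otimes_R\check C^\bullet(R;\underline{f})$, observing that the $R$-linear map $\gamma$ induces a map of \v{C}ech complexes whose cohomology map is $q_1$, and computing $(\gamma\otimes\id)(s\otimes w)=\gamma(s)w$ on each localization, which is exactly the middle equality in your display. The paper argues at the level of modules instead: it uses the splitting to write $S\cong R\oplus W$, so that $S\otimes_RH_I^i(R)\cong H_I^i(R)\oplus W\otimes_RH_I^i(R)$ and $H_{IS}^i(S)\cong H_I^i(R)\oplus H_I^i(W)$, notes that $q_1,q_2$ become the projections onto the first summands, and concludes from the naturality statement of Lemma \ref{basic lemma} (namely that $\phi$ is the identity on the summand $H_I^i(R)$ and carries $W\otimes_RH_I^i(R)$ into $H_I^i(W)$), with no element computation. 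One small correction to your closing remark: your argument never actually invokes $\gamma\circ\iota=\id_R$; the equality $\gamma(s\bar z)=\gamma(s)\bar z$ needs only the $R$-linearity of $\gamma$, since in $S_{f_{i_1}\cdots f_{i_k}}\cong S\otimes_RR_{f_{i_1}\cdots f_{i_k}}$ the element $s\cdot\iota(\bar z_\sigma)$ is the $R$-multiple $s\otimes\bar z_\sigma$. So your computation in fact proves the commutativity for an arbitrary $R$-linear map $\gamma\colon S\to R$, which is slightly more general than the statement; the splitting hypothesis is what the paper's proof uses to obtain the decomposition $S=R\oplus W$, and it is genuinely needed only afterwards, in Proposition \ref{key prop}, where $\gamma\circ\iota=\id_R$ makes the composite $q_1\circ j_1$ the identity.
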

\begin{proof}
We may identify $S$ with $R\oplus W$ and $R\hookrightarrow S$ with
$R\hookrightarrow R\oplus W$ which sends $r$ to $(r,0)$, and
$S\rightarrow R$ with $R\oplus W\rightarrow R$ which sends $(r,w)$
to $r$ (we may take $W$ to be the $R$-submodule of $S$ generated by
$s-\iota\circ\gamma(s)$). Under this identification, we have:
\[
S\otimes_RH_I^i(R)=H_I^i(R)\oplus W\otimes_RH_I^i(R)
\]
\[
H_{IS}^i(S)=H_I^i(R)\oplus H_I^i(W)
\]
and $q_1, q_2$ are just the projections onto the first factors. Now
the conclusion is clear because by Lemma \ref{basic lemma}, $\phi$:
$S\otimes_RH_I^i(R)\rightarrow H_{IS}^i(S)$ is the identity on
$H_I^i(R)$ and sends $W\otimes_RH_I^i(R)$ to $H_I^i(W)$.
\end{proof}

\begin{proposition}
\label{key prop} Let $R$ be a Noetherian ring and $S$ be an
$R$-algebra such that $R\hookrightarrow S$ splits.
Let $y$ be an element in $H_I^i(R)$ and $N$ be a submodule of
$H_I^i(R)$. If the image of $y$ is in the $S$-span of the image of
$N$ in $H_{I S}^i(S)$, then $y\in N$.
\end{proposition}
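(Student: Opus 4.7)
The plan is to translate the hypothesis about the $S$-span in $H_{IS}^i(S)$ back into something happening in $S\otimes_R H_I^i(R)$ via the map $\phi$, and then apply the splitting-induced maps from Lemma \ref{lemma 2} to push everything down into $H_I^i(R)$ inside the submodule $N$.

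First I would unpack the hypothesis. By assumption, $j_1(y)=\sum_k s_k\cdot j_1(n_k)$ for some $s_k\in S$ and $n_k\in N$, where the dot is the $S$-action on $H_{IS}^i(S)$. Using Lemma \ref{lemma 1} to write $j_1=\phi\circ j_2$ and the fact that $\phi$ is $S$-linear (the second half of Lemma \ref{basic lemma}), each summand rewrites as $s_k\cdot\phi(1\otimes n_k)=\phi(s_k\otimes n_k)$, so
\[
j_1(y)=\phi\Bigl(\sum_k s_k\otimes n_k\Bigr)\quad\text{in }H_{IS}^i(S),
\]
where $\sum_k s_k\otimes n_k$ lies in $S\otimes_R N\subseteq S\otimes_R H_I^i(R)$.

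Next I would apply the splitting-side map $q_1$ from Lemma \ref{lemma 2}. Under the decomposition $S=R\oplus W$ used in its proof, $j_1$ is the inclusion of the $H_I^i(R)$-summand into $H_I^i(R)\oplus H_I^i(W)=H_{IS}^i(S)$ and $q_1$ is the projection onto that summand; hence $q_1\circ j_1=\mathrm{id}_{H_I^i(R)}$. Combining this with the commutativity $q_1\circ\phi=q_2$ from Lemma \ref{lemma 2} gives
\[
y=q_1(j_1(y))=q_1\Bigl(\phi\bigl(\textstyle\sum_k s_k\otimes n_k\bigr)\Bigr)=q_2\Bigl(\sum_k s_k\otimes n_k\Bigr)=\sum_k\gamma(s_k)\,n_k.
\]
Since each $\gamma(s_k)\in R$ and each $n_k\in N$, and $N$ is an $R$-submodule of $H_I^i(R)$, this displays $y$ as an element of $N$, which is exactly the conclusion.

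There is essentially no obstacle here beyond bookkeeping: the content of the proposition is already encoded in Lemmas \ref{basic lemma}, \ref{lemma 1}, and \ref{lemma 2}, and the argument is just the composition $q_1\circ j_1=\mathrm{id}$ applied to the relation $j_1(y)=\phi(\sum s_k\otimes n_k)$, together with $S$-linearity of $\phi$ to absorb the $S$-coefficients into the tensor factor. The only place to be careful is to invoke $S$-linearity of $\phi$ before applying $q_1$, since $q_1$ itself is only $R$-linear (as $\gamma$ is only an $R$-module splitting, not a ring map).
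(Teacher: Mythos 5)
Your argument is correct and follows essentially the same route as the paper: rewrite $j_1(y)=\sum s_k\cdot j_1(n_k)$ using $j_1=\phi\circ j_2$ and the $S$-linearity of $\phi$, then apply $q_1$ together with $q_1\circ\phi=q_2$ and $q_1\circ j_1=\mathrm{id}$ to land on $\sum\gamma(s_k)n_k\in N$. The only difference is the bookkeeping order (absorbing the $S$-coefficients into the tensor before applying $q_1$ rather than term by term), which is immaterial.
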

\begin{proof}
We know there are two commutative diagrams as in Lemma \ref{lemma 1}
and \ref{lemma 2} (note that here $j_1$ and $j_2$ are inclusions
since $R\hookrightarrow S$ splits). We use $\gamma$ to denote the
splitting $S\rightarrow R$. The condition says that $j_1(y)=\sum
s_k\cdot j_1(n_k)$ for some $s_k\in S$ and $n_k\in N$. Applying
$q_1$ we get:
\begin{eqnarray*}
y&=&q_1\circ j_1(y)\\
&=&\sum q_1(s_k\cdot j_1(n_k))\\
&=&\sum q_1(s_k\cdot\phi\circ j_2(n_k))\\
&=&\sum q_1 \circ \phi (s_k \cdot j_2( n_k))\\
&=&\sum q_2(s_k \otimes n_k)\\
&=&\sum \gamma(s_k)\cdot n_k\in N
\end{eqnarray*}
where the first equality is by definition of $q_1$, the third
equality is by Lemma \ref{lemma 1}, the fourth equality is because
$\phi$ is $S$-linear, the fifth equality is by Lemma \ref{lemma 2}
and the definition of $j_2$ and the last equality is by the
definition of $q_2$. This finishes the proof.
\end{proof}

Now we return to the situation in which we are interested. We assume
$(R,\m)$ is a Noetherian local ring of equal characteristic $p>0$.
We first prove an immediate corollary of Proposition \ref{key prop},
which explains how FH-finite and stably FH-finite properties behave
under split maps.
\begin{corollary}
\label{FH-finite for split couple}
Suppose $(R,\m)\hookrightarrow(S,\n)$ is split and ${\m S}$ is
primary to $\n$. Then if $S$ is FH-finite (respectively, stably
FH-finite), so is $R$.
\end{corollary}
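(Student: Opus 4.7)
The plan is to apply Proposition \ref{key prop} directly. First I would observe that since $\m S$ is $\n$-primary, the local cohomology $H_{\m S}^i(S)$ equals $H_{\n}^i(S)$, so that the natural map $j_1\: H_{\m}^i(R) \to H_{\n}^i(S)$ (which is injective because $R \hookrightarrow S$ splits) is the map supplied by Lemma \ref{lemma 1}. Moreover $j_1$ commutes with the Frobenius actions on source and target.

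For the FH-finite case, I would define a map
\[ \Phi\: \{F\text{-compatible submodules of } H_{\m}^i(R)\} \longrightarrow \{F\text{-compatible } S\text{-submodules of } H_{\n}^i(S)\} \]
by $\Phi(N) = S \cdot j_1(N)$, the $S$-span of $j_1(N)$ inside $H_{\n}^i(S)$. I would check $F$-compatibility of $\Phi(N)$ from $F(s \cdot j_1(n)) = s^p \cdot j_1(F(n)) \in S \cdot j_1(N)$, using that $F(N) \subseteq N$. The key step is injectivity of $\Phi$: if $\Phi(N) = \Phi(N')$, then for any $y \in N'$ the element $j_1(y)$ lies in the $S$-span of $j_1(N)$, so Proposition \ref{key prop} forces $y \in N$; hence $N' \subseteq N$, and by symmetry $N = N'$. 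Since $S$ is FH-finite, the target of $\Phi$ is finite, so $H_{\m}^i(R)$ has only finitely many $F$-compatible submodules for each $i$. Thus $R$ is FH-finite.

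For the stably FH-finite case, I would reduce to the FH-finite case applied level by level. Given the splitting $\gamma\: S \to R$, it extends coefficient-wise to a splitting $\gamma_n\: S[[x_1,\dots,x_n]] \to R[[x_1,\dots,x_n]]$ of the flat inclusion $R[[x_1,\dots,x_n]] \hookrightarrow S[[x_1,\dots,x_n]]$. Writing $\m_n = (\m, x_1, \dots, x_n)$ and $\n_n = (\n, x_1, \dots, x_n)$ for the respective maximal ideals, the expansion of $\m_n$ is $(\m S, x_1, \dots, x_n)$, which is $\n_n$-primary because $\m S$ is $\n$-primary. Applying the FH-finite case of the corollary (already proved) to each pair $R[[x_1,\dots,x_n]] \hookrightarrow S[[x_1,\dots,x_n]]$, and using that $S$ stably FH-finite means each $S[[x_1,\dots,x_n]]$ is FH-finite, I conclude each $R[[x_1,\dots,x_n]]$ is FH-finite, \ie $R$ is stably FH-finite.

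The only subtle point is the injectivity of $\Phi$, but this is exactly what Proposition \ref{key prop} is designed to deliver; everything else (that $j_1$ is Frobenius equivariant, that splittings extend to power series rings, that primaryness is preserved) is routine.
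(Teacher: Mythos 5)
Your proposal is correct and follows essentially the same route as the paper: reduce the stably FH-finite case to the FH-finite case by extending the splitting (and the primaryness of the expanded maximal ideal) to power series rings, then show that the assignment sending an $F$-compatible submodule $N\subseteq H_\m^i(R)$ to its $S$-span in $H_\n^i(S)$ lands in $F$-compatible submodules and is injective by Proposition \ref{key prop}, so FH-finiteness of $S$ forces FH-finiteness of $R$.
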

\begin{proof}
First notice that, when $R\hookrightarrow S$ is split, so is
$R[[x_1,\dots,x_n]]\hookrightarrow S[[x_1,\dots,x_n]]$. So it
suffices to prove the statement for FH-finite. Since $\m S$ is
primary to $\n$, for every $i$, we have a natural commutative
diagram:
\[  \xymatrix{
   H_\m^i(R) \ar[d]^{F}  \ar[r] &    H_{\n}^i(S)  \ar[d]^{F}   \\
H_\m^i(R)    \ar[r]   & H_{\n}^i(S) 
} \] where the horizontal maps are induced by the inclusion
$R\hookrightarrow S$, and the vertical maps are the Frobenius
action. It is straightforward to check that if $N$ is an
$F$-compatible submodule of $H_\m^i(R)$, then the $S$-span of $N$ is
also an $F$-compatible submodule of $H_\n^i(S)$.

If $N_1$ and $N_2$ are two different $F$-compatible submodules of
$H_\m^i(R)$, then their $S$-span in $H_\n^i(S)$ must be different by
Proposition \ref{key prop}. But since $S$ is FH-finite, each
$H_\n^i(S)$ only has finitely many $F$-compatible submodules. Hence
so is $H_\m^i(R)$. This finishes the proof.
\end{proof}

Now we start proving our main result. First we prove a lemma:
\begin{lemma}
\label{key lemma} Let $W$ be an $R\{F\}$-module. Then $W$ is
anti-nilpotent if and only if for every $y\in W$, $y\in
\sspan_R\langle F(y), F^2(y), F^3(y),\dots\dots\rangle$ .
\end{lemma}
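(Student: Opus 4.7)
The plan is to prove both directions by a direct argument, using the fact that the $R$-span of the Frobenius orbit of any element is automatically an $F$-compatible submodule.

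For the forward direction, I would assume $W$ is anti-nilpotent and fix an arbitrary $y \in W$. The key observation is that $V := \sspan_R\langle F(y), F^2(y), F^3(y), \dots\rangle$ is an $F$-compatible submodule of $W$, since $F$ sends each generator $F^i(y)$ to the next generator $F^{i+1}(y)$ and $F$ is additive and $p$-semilinear over $R$. Now consider the class $\bar{y} \in W/V$. By construction $F(y) \in V$, so $F(\bar{y}) = 0$ in $W/V$. Since anti-nilpotency forces $F$ to act injectively on $W/V$, this yields $\bar{y} = 0$, i.e., $y \in V$, which is exactly the desired conclusion.

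For the reverse direction, I would assume the span condition holds and take any $F$-compatible submodule $V \subseteq W$. To show $F$ acts injectively on $W/V$, suppose $F(\bar{y}) = 0$ in $W/V$ for some $y \in W$; equivalently $F(y) \in V$. Because $V$ is $F$-compatible, $F^i(y) \in V$ for all $i \geq 1$, so $\sspan_R\langle F(y), F^2(y), \dots\rangle \subseteq V$. Applying the hypothesis to $y$ then gives $y \in V$, so $\bar{y} = 0$, establishing injectivity of $F$ on $W/V$.

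There is no real obstacle here; both implications follow from unwinding the definitions, and the only slightly subtle point is to check that the span in question is closed under $F$, which is immediate from the $p$-semilinear Leibniz-type rule $F(ru) = r^p F(u)$. I would present the proof as two short paragraphs, one for each implication.
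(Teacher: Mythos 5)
Your proof is correct and follows essentially the same route as the paper: the forward direction is identical, and your reverse direction is just the paper's contradiction argument rephrased as a direct verification of injectivity on $W/V$. The point you flag, that $\sspan_R\langle F(y), F^2(y),\dots\rangle$ is $F$-compatible by $p$-semilinearity, is exactly the observation the paper uses.
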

\begin{proof}
Suppose $W$ is anti-nilpotent. For each $y\in W$,
$V:=\sspan_R\langle F(y), F^2(y), F^3(y),\dots\dots\rangle$ is an
$F$-compatible submodule of $W$. Hence, $F$ acts injectively on $W/V$
by anti-nilpotency of $W$. But clearly $F(\overline{y})=0$ in $W/V$,
so $\overline{y}=0$, so $y\in V$.

For the other direction, suppose there exists some $F$-compatible
submodule $V\subseteq W$ such that $F$ does not act injectively on
$W/V$. We can pick some $y \notin V$ such that $F(y)\in V$. Since
$V$ is an $F$-compatible submodule and $F(y)\in V$, $\sspan_R\langle
F(y), F^2(y), F^3(y),\dots\dots\rangle\subseteq V$. So $y\in
\sspan_R\langle F(y), F^2(y), F^3(y),\dots\dots\rangle \subseteq V$
which is a contradiction.
\end{proof}
\begin{theorem}
\label{F-split implies anti-nilpotency} Let $(R,\m)$ be a local ring
which is $F$-split. Then $H_{\m}^i(R)$ is anti-nilpotent for every
$i$.
\end{theorem}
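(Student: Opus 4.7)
By Lemma~\ref{key lemma}, it suffices to show that for every $y\in H_\m^i(R)$ we have $y\in N:=\sspan_R\langle F(y),F^2(y),\dots\rangle$. Fix such a $y$, pick an integer $e\geq 1$ (to be chosen below), and set $S:=R^{1/p^e}$. Since $R$ is $F$-split, the iterated Frobenius $F^e\colon R\to R$ also splits, which is equivalent to saying that the inclusion $R\hookrightarrow S$ is a split $R$-module map. We are therefore in position to apply Proposition~\ref{key prop}.

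By Proposition~\ref{key prop}, the containment $y\in N$ will follow once we show that the image $j_1(y)\in H_{\m S}^i(S)$ lies in the $S$-span of $j_1(N)$. Because $j_1$ is $R$-linear and $N$ is generated as an $R$-module by $F(y),F^2(y),\dots$, this $S$-span coincides with $\sspan_S\langle F(y),F^2(y),\dots\rangle\subseteq H_{\m S}^i(S)$. Thus the task becomes producing coefficients $s_k\in S$ such that
\begin{equation*}
j_1(y)=\sum_{k\ge 1}s_k\,F^k(y)\quad\text{in }H_{\m S}^i(S).
\end{equation*}

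I would construct such coefficients by a \v{C}ech-theoretic manipulation. Represent $y$ as a \v{C}ech cohomology class with respect to a system of parameters $\vec x$ for $\m$, so that $F^k(y)$ is represented by the $p^k$-th power on coefficients. The essential new ingredient available in $S$ is the existence of $p^e$-th roots for every element of $R$: since $R$ is $F$-pure and hence reduced, each $a\in R$ has a unique root $a^{1/p^e}\in S$, and similarly for each parameter. These additional fractional-power elements furnish the extra denominators and numerators needed to rewrite $j_1(y)$ as an $S$-linear combination of its Frobenius iterates inside $H_{\m S}^i(S)$.

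The principal obstacle is the explicit production of the $s_k$. One must solve a \v{C}ech-level congruence comparing $a\vec x^{(N-1)\vec t}$ with a sum of the form $\sum_k s_k\,a^{p^k}\vec x^{(N-p^k)\vec t}$ modulo $(\vec x^{N\vec t})S$, for some common denominator level $N=p^{e_0}$. The analogous congruence in $R$ alone is generally not solvable (as one sees from a degree comparison: LHS contains factors of $a$ of degree $1$, while every summand on the right has $a$-degree at least $p^k$), and the point of passing to $S$ is that the $p^e$-th roots together with the freedom to use several Frobenius iterates simultaneously provide the flexibility needed to realize the combination, for $e$ sufficiently large. Once the combination is in hand, Proposition~\ref{key prop} yields $y\in N$, and combining this with Lemma~\ref{key lemma} gives the anti-nilpotency of each $H_\m^i(R)$.
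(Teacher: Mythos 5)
Your reduction via Lemma~\ref{key lemma} and your idea of applying Proposition~\ref{key prop} to a high iterate of Frobenius (i.e.\ to $R\hookrightarrow S=R^{1/p^e}$, which does split when $R$ is $F$-split) are sound, but the proof has a genuine gap at exactly the point you flag: you never establish that $j_1(y)$ lies in the $S$-span of $j_1(N)$. You reduce this to solving a \v{C}ech-level congruence and then assert that the $p^e$-th roots available in $S$ ``provide the flexibility needed'' for $e\gg 0$, but no construction of the coefficients $s_k$ is given, and the mechanism you invoke is not the right one: adjoining $p^e$-th roots does not remove the degree obstruction you yourself observe, because the relation you need is between $j_1(y)$ and the elements $j_1(F^k(y))$, and under the natural identification of $H^i_{\m S}(S)$ with $H^i_\m(R)$ the map $j_1$ is $F^e$, so $j_1(F^k(y))$ is $F^{e+k}(y)$, not $F^k(y)$ (your claim that the $S$-span of $j_1(N)$ equals $\sspan_S\langle F(y),F^2(y),\dots\rangle$ conflates these). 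Thus what you must prove is precisely the membership $F^e(y)\in \sspan\langle F^{e+1}(y),F^{e+2}(y),\dots\rangle$ for some $e$, which is essentially the statement being proved and cannot be obtained by a formal \v{C}ech manipulation with roots.

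The missing ingredient is Artinianness of $H^i_\m(R)$, and with it the argument closes without any explicit coefficients. Set $N_j=\sspan_R\langle F^j(y),F^{j+1}(y),\dots\rangle$; the descending chain $N_0\supseteq N_1\supseteq\cdots$ stabilizes, so there is an $e$ with $N_e=N_{e+1}$, and then $F^e(y)\in N_e=N_{e+1}$ is exactly the membership your strategy requires, after which Proposition~\ref{key prop} (applied to the split map given by the $e$-th Frobenius, or equivalently, as in the paper, its contrapositive applied to the single Frobenius at the smallest such $e$ to force $e=0$) yields $y\in N_1$ and hence anti-nilpotency via Lemma~\ref{key lemma}. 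Note also that the stabilization step uses only Artinianness, while $F$-splitness enters solely through Proposition~\ref{key prop}; in your sketch the role of the splitting versus the role of finiteness is blurred, which is why the \v{C}ech computation appears (misleadingly) to be the crux.
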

\begin{proof}
By Lemma \ref{key lemma}, it suffices to show for every $y\in
H_{\m}^i(R)$, we have \[y\in \sspan_R\langle F(y), F^2(y),
F^3(y),\dots\dots\rangle.\] Let $N_j=\sspan_R\langle
F^j(y),F^{j+1}(y)\dots\dots\rangle$, consider the descending chain:
\begin{equation*}
N_0 \supseteq N_1\supseteq N_2\supseteq\cdots\cdots \supseteq
N_j\supseteq\cdots\cdots
\end{equation*}
Since $H_m^{i}(R)$ is Artinian, this chain stabilizes, so there
exists a smallest $e$ such that $N_e=N_{e+1}$. If $e=0$ we are done.
Otherwise we have $F^{e-1}(y)\notin N_e$. Since $R$ is $F$-split, we
apply Proposition \ref{key prop} to the Frobenius map
$R\xrightarrow{r\rightarrow r^p}R=S$ (and $I=\m$). In order to make
things clear we use $S$ to denote the target $R$, but we keep in
mind that $S=R$.

From Proposition \ref{key prop} we know that the image of
$F^{e-1}(y)$ is not contained in the $S$-span of the image of $N_e$
under the map $H_\m^i(R)\rightarrow H_{\m S}^i(S) \cong H_\m^i(R)$.
But this map is exactly the Frobenius map on $H_\m^i(R)$, so the
image of $F^{e-1}(y)$ is $F^{e}(y)$, and after identifying $S$ with
$R$, the $S$-span of the image of $N_e$ is the $R$-span of
$F^{e+1}(y), F^{e+2}(y), F^{e+3}(y),\dots\dots$ which is $N_{e+1}$.
So $F^{e}(y)\notin N_{e+1}$, which contradicts our choice of $e$.
\end{proof}
\begin{theorem}
\label{F-pure implies FH-finite} Let $(R,\m)$ be an $F$-pure local
ring. Then $R$ and all power series rings over $R$ are FH-finite
(i.e. $R$ is stably FH-finite).
\end{theorem}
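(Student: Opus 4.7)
The plan is to reduce to the $F$-split case by completion, then invoke the anti-nilpotency theorem just proved, and finally apply the characterization of stable FH-finiteness in terms of anti-nilpotency.

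First I would pass from $R$ to its completion $\widehat{R}$ using Lemma \ref{reduce to F-split}: since $R$ is stably FH-finite if and only if $\widehat{R}$ is, and since $F$-purity is preserved under completion, it suffices to prove the theorem when $R$ is complete. In that case (as noted at the start of Section 2), $F$-pure is equivalent to $F$-split, so we may assume $R \hookrightarrow R$ (Frobenius) splits.

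With $R$ now $F$-split, Theorem \ref{F-split implies anti-nilpotency} immediately yields that every local cohomology module $H_\m^i(R)$ is anti-nilpotent. Then Theorem \ref{anti-nilpotency is equiv to stably FH finite} (the equivalence of anti-nilpotency of all $H_\m^i(R)$ with $R_n$ being FH-finite for every $n$) gives that $R$, together with all power series rings $R[[x_1,\dots,x_n]]$ over it, is FH-finite. Transporting this back across the completion via Lemma \ref{reduce to F-split} for each $R[[x_1,\dots,x_n]]$ yields stable FH-finiteness of the original $R$.

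In short, the real content has already been packaged into the preceding results: Proposition \ref{key prop} and Theorem \ref{F-split implies anti-nilpotency} handle the geometric/module-theoretic heart, and Theorem \ref{anti-nilpotency is equiv to stably FH finite} converts anti-nilpotency into stable FH-finiteness. There is no substantive obstacle in this final step; the only mild subtlety is to make sure that the reduction to $\widehat{R}$ is compatible with taking power series rings, but this follows since $\widehat{R[[x_1,\dots,x_n]]} = \widehat{R}[[x_1,\dots,x_n]]$, so Lemma \ref{reduce to F-split} can be applied at each level.
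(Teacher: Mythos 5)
Your proposal is correct and follows essentially the same route as the paper: reduce to the complete (hence $F$-split) case via Lemma \ref{reduce to F-split}, apply Theorem \ref{F-split implies anti-nilpotency} to get anti-nilpotency of all $H_\m^i(R)$, and conclude stable FH-finiteness from Theorem \ref{anti-nilpotency is equiv to stably FH finite}. Your extra remark that $\widehat{R[[x_1,\dots,x_n]]}=\widehat{R}[[x_1,\dots,x_n]]$ is harmless but not needed, since Lemma \ref{reduce to F-split} already transfers the ``stably FH-finite'' property itself between $R$ and $\widehat{R}$.
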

\begin{proof}
By Lemma \ref{reduce to F-split}, we may assume $R$ is $F$-split.
Now the result is clear from Theorem \ref{F-split implies
anti-nilpotency} and Theorem \ref{anti-nilpotency is equiv to stably
FH finite}.
\end{proof}

\section{FH-finite length and stable FH-finiteness in the complete $F$-finite case}

In this section, we show for a complete and $F$-finite local ring $(R,\m)$, the condition that $R_P$ be stably FH-finite for all $P\in \Spec{R}-\{\m\}$ is equivalent to the condition that $R$ have FH-finite length. First we recall the following important theorem of Lyubeznik:
\begin{theorem}[{\it cf.} Theorem 4.7 in \cite{LyubeznikFModulesApplicationsToLocalCohomology} or Theorem 4.7 in \cite{MelandFlorianFrobenius}]
\label{Lyubeznik's result}
Let $W$ be an $R\{F\}$-module which is Artinian as an $R$-module. Then $W$ has a finite filtration
\begin{equation}
\label{Lyubeznik's filtration}
0= L_0\subseteq N_0\subseteq L_1\subseteq N_1\subseteq \cdots\subseteq L_s\subseteq N_s=W
\end{equation}
by Frobenius compatible submodules of $W$ such that every $N_j/L_j$ is $F$-nilpotent, while every $L_j/N_{j-1}$ is simple in the category of $R\{F\}$-modules, with a nonzero Frobenius action. The integer $s$ and the isomorphism classes of the modules $L_j/N_{j-1}$ are invariants of $W$.
\end{theorem}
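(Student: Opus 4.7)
The plan is to follow Lyubeznik by constructing the filtration inductively using the Hartshorne--Speiser--Lyubeznik (HSL) stabilization lemma, then establishing its finiteness via Matlis duality and the finite-length theory of Cartier crystals. By Lemma \ref{reduce to F-split} I may first pass to the completion, and by Cohen's structure theorem write $R = A/I$ for a complete regular local ring $A$ carrying a Frobenius; then $W$ becomes an $A\{F\}$-module that is still Artinian over $A$ (since $R$-submodules and $A$-submodules of $W$ coincide, because $I\cdot W = 0$). The HSL lemma says the descending chain of $A$-submodules $W \supseteq A\cdot F(W) \supseteq A\cdot F^2(W) \supseteq \cdots$ stabilizes at some $W_s$ with $A\cdot F(W_s) = W_s$ and $F^e$ annihilating $W/W_s$ for $e \gg 0$. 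A parallel Fitting-type argument, applied on the Matlis-dual side where the dual is Noetherian over $A$, produces a unique maximum $F$-nilpotent submodule $N_{\mathrm{nil}}(W) \subseteq W$.

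With these tools in hand, I would construct the filtration from the bottom. Set $L_0 = 0$ and $N_0 = N_{\mathrm{nil}}(W)$. Given $N_{j-1}$, the quotient $W/N_{j-1}$ has no nonzero $F$-nilpotent submodule (else its preimage would strictly enlarge $N_{j-1}$), so by DCC on $A$-submodules there exists a minimum nonzero $F$-compatible submodule $L_j/N_{j-1} \subseteq W/N_{j-1}$. Such a minimum is automatically simple as an $R\{F\}$-module, and its Frobenius action is nonzero (else it would itself be $F$-nilpotent). Set $N_j$ to be the preimage in $W$ of $N_{\mathrm{nil}}(W/L_j)$, so that $N_j/L_j$ is $F$-nilpotent. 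Continue the process.

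The hardest step will be showing that this construction terminates. The chain $L_0 \subsetneq L_1 \subsetneq L_2 \subsetneq \cdots$ is strictly ascending in $W$, and ACC on $F$-compatible submodules does \emph{not} follow formally from $A$-Artinianness (it would be equivalent, under Matlis duality, to DCC on Cartier-compatible submodules of the Noetherian dual, which is genuinely stronger). To handle this, I would Matlis-dualize to a finitely generated $A$-module $M := W^\vee$ equipped with a natural Cartier structure $C \colon F_* M \to M$, and invoke the finite length of $M$ in the category of Cartier crystals (Cartier modules modulo nilpotence) developed in \cite{BlickleBoeckleCartierModulesFiniteness}. Each simple subquotient $L_j/N_{j-1}$ of $W$ dualizes to a simple Cartier-crystal subquotient of $M$, so the number of nontrivial steps is bounded by the crystal length of $M$. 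Finally, the uniqueness of the integer $s$ and the isomorphism classes $\{L_j/N_{j-1}\}$ follows from a Jordan--Hölder argument carried out in the abelian category of Cartier crystals (equivalently, in the category of $R\{F\}$-modules localized at the $F$-nilpotent morphisms).
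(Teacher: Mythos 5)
The paper itself does not prove this theorem; it is quoted from Lyubeznik (Theorem 4.7 of \cite{LyubeznikFModulesApplicationsToLocalCohomology}) and Enescu--Hochster, so your proposal has to be measured against that argument. Your construction of the filtration (reduce to the complete case, take the maximal $F$-nilpotent submodule, then a minimal nonzero $F$-compatible submodule of the quotient, and iterate) is sound, and you correctly identify termination as the crux. But the tool you use for termination and for the Jordan--H\"older uniqueness has a genuine gap: the Matlis-duality exchange between Artinian $R\{F\}$-modules and Noetherian Cartier modules, and the finite length of Cartier crystals from \cite{BlickleBoeckleCartierModulesFiniteness}, are only available for $F$-finite rings --- exactly as this paper records them (Theorem \ref{matlis dual induce equivalence} assumes complete, local \emph{and} $F$-finite; Theorem \ref{Cartier module filtration} assumes $R$ is $F$-finite; Sharp--Yoshino's duality is likewise ``in the $F$-finite case''). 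Your reduction lands you over $A=K[[x_1,\dots,x_d]]$, which is $F$-finite only when the residue field $K$ is, whereas the theorem carries no $F$-finiteness hypothesis and is applied in this paper to complete rings that need not be $F$-finite (e.g.\ $\widehat{R}$ in Section 5, before the $\Gamma$-construction). Without $F$-finiteness there is not even an obvious natural Cartier structure on $W^{\vee}$ (one needs something like $F^{!}E\cong E$), so neither the crystal-length bound nor the Jordan--H\"older argument in the crystal category gets off the ground. This is precisely where Lyubeznik's proof substitutes \emph{regularity} of $A$ for $F$-finiteness: over a complete regular local ring the Frobenius functor is exact and preserves the injective hull $E$, which lets him attach to $W$ an $F$-finite $F_A$-module (his functor $H_{R,A}$) and invoke his theorem that such $F$-modules have finite length in the category of $F$-modules. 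As written, your argument proves the theorem only when $\widehat{R}$ is $F$-finite; to cover the general case you would either have to reproduce Lyubeznik's $F$-module mechanism or carry out a nontrivial $\Gamma$-construction descent that you have not addressed (and which in this paper is carried out \emph{using} Theorem \ref{Lyubeznik's result}).

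Two smaller points. First, ``minimum nonzero $F$-compatible submodule'' should be ``minimal'': there is in general no least one, but a minimal one exists by Artinianness and suffices for simplicity, and its $F$-action is nonzero exactly as you argue. Second, the existence of a maximum $F$-nilpotent submodule that is killed by a single power of $F$ is the Hartshorne--Speiser--Lyubeznik theorem; in the stated generality it is due to Lyubeznik, and its Cartier-side analogue (stabilization of the chain $C^{e}(M)$) is again a theorem proved under $F$-finiteness, not a formal ``Fitting-type'' consequence of Noetherianity --- it is citable, but your dual-side justification presupposes the same $F$-finite duality as above. Also, Lemma \ref{reduce to F-split} is about local cohomology of $R$ versus $\widehat{R}$; the correct (standard) justification for your first reduction is simply that an Artinian $R$-module is canonically a module over $\widehat{R}$ with the same submodules and the same Frobenius action.
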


The following proposition in \cite{MelandFlorianFrobenius} characterizes being anti-nilpotent and having finite length in the category of $R\{F\}$-modules in terms of Lyubeznik's filtration:

\begin{proposition}[{\it cf.} Proposition 4.8 in \cite{MelandFlorianFrobenius}]
\label{characterization of anti-nilpotency interms of Lyubeznik's filtration}
Let the notations and hypothesis be as in Theorem \ref{Lyubeznik's result}. Then:
\begin{enumerate}
\item $W$ has finite length as an $R\{F\}$-module if and only if each of the factors $N_j/L_j$ has finite length in the category of $R$-modules.
\item $W$ is anti-nilpotent if and only if in some (equivalently, every) filtration, the nilpotent factors $N_j/L_j=0$ for every $j$.
\end{enumerate}
\end{proposition}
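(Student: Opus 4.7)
The plan is to exploit the filtration from Theorem \ref{Lyubeznik's result} and reduce each claim to two clean facts: (i) anti-nilpotency is stable under extensions in the category of $R\{F\}$-modules, and (ii) an $F$-nilpotent module on which Frobenius acts injectively must vanish. To verify (i), given $0 \to A \to B \to C \to 0$ in $R\{F\}$-mod with $A,C$ anti-nilpotent and $V \subseteq B$ an $F$-compatible submodule, one writes the induced sequence $0 \to A/(A \cap V) \to B/V \to C/\pi(V) \to 0$, where $\pi\colon B \to C$ is the projection; the ends are $F$-injective by hypothesis, and a short diagram chase propagates $F$-injectivity to $B/V$. Fact (ii) is immediate by iterating $F^{e}M = 0$ against the injectivity of $F$.

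For part (2), the ($\Rightarrow$) direction falls out at once: for every $j$, since $L_j$ is $F$-compatible in the anti-nilpotent $W$, Frobenius acts injectively on $W/L_j$, hence on its submodule $N_j/L_j$, which is also $F$-nilpotent; by (ii) this forces $N_j/L_j = 0$. For ($\Leftarrow$), vanishing of the nilpotent factors collapses the filtration into one whose consecutive quotients $L_{j+1}/L_j$ are $R\{F\}$-simple with nonzero Frobenius, and any such simple module is trivially anti-nilpotent (the kernel of $F$ is an $F$-compatible proper submodule, so zero by simplicity, and $0$ is the only other $F$-compatible submodule); then $W$ is anti-nilpotent by induction on $s$ using (i).

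For part (1), ($\Leftarrow$) is a length count: each $L_{j+1}/N_j$ has $R\{F\}$-length one (being simple), and each $N_j/L_j$ has $R\{F\}$-length bounded by its $R$-length, so the total $R\{F\}$-length of $W$ is finite. For ($\Rightarrow$), each $N_j/L_j$ inherits finite $R\{F\}$-length as an $R\{F\}$-subquotient of $W$, so it suffices to show that an $F$-nilpotent module $M$ of finite $R\{F\}$-length has finite $R$-length; if $F^eM = 0$, the filtration $0 = F^eM \subseteq F^{e-1}M \subseteq \cdots \subseteq M$ has successive quotients on which $F$ acts as zero, so $R$-submodules and $R\{F\}$-submodules coincide there, and finite $R\{F\}$-length therefore yields finite $R$-length for each quotient, hence for $M$. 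The only non-automatic step is the diagram chase establishing (i); everything else is mechanical from the shape of the filtration.
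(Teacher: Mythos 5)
Your argument is correct in substance. Note that the paper itself offers no proof of this proposition: it is imported verbatim from Enescu--Hochster (Proposition 4.8 of \cite{MelandFlorianFrobenius}), so your write-up is a self-contained reconstruction, and it follows the same standard route one would expect there: closure of anti-nilpotency under extensions in the category of $R\{F\}$-modules, plus the triviality that an $F$-nilpotent module with injective Frobenius is zero. Your diagram chase for extension-closure does go through: if $F(b)\in V$, then the image of $b$ in $C$ lands in $\pi(V)$ by injectivity on $C/\pi(V)$, so $b=a+v$ with $a\in A$, $v\in V$, whence $F(a)=F(b)-F(v)\in A\cap V$, so $a\in A\cap V$ and $b\in V$. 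The forward implication in (2) applies to an arbitrary filtration while the converse uses only one, which is exactly what the ``some, equivalently every'' clause requires. The one imprecision is in the forward direction of (1): the sets $F^iM$ are only closed under addition and multiplication by $p^i$-th powers of scalars, so $0=F^eM\subseteq\cdots\subseteq M$ is not literally a filtration by $R\{F\}$-submodules. Replace it by the $R$-spans of the $F^iM$, or more cleanly by the kernel filtration $0\subseteq\ker F\subseteq\ker F^2\subseteq\cdots\subseteq\ker F^e=M$; these are $R\{F\}$-submodules, Frobenius acts as zero on each successive quotient, and your argument (finite $R\{F\}$-length forces finite $R$-length on a module with zero Frobenius action, hence on $M$) then runs verbatim. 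With that small repair the proof is complete.
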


\begin{remark}
It is worth pointing out that an Artinian $R$-module $W$ is Noetherian over $R\{F\}$ if and only if in some (equivalently, every) filtration as in Theorem \ref{Lyubeznik's result}, each of the factors $N_j/L_j$ is Noetherian as an $R$-module (same proof as Proposition 4.8 in \cite{MelandFlorianFrobenius}). So $W$ is Noetherian over $R\{F\}$ if and only if it has finite length as an $R\{F\}$-module. Hence $R$ has FH-finite length if and only if all local cohomology modules $H_\m^i(R)$ are Noetherian $R\{F\}$-modules.
\end{remark}

We also need the following important theorem in \cite{BlickleBoeckleCartierModulesFiniteness} which relates $R\{F\}$-modules and Cartier modules. This result was also proved independently by R. Y. Sharp and Y. Yoshino in \cite{SharpandYoshinoRightandleftmodulesoverFrobeniuspolynomialring} in the language of left and right $R\{F\}$-modules.

\begin{theorem}[{\it cf.} Proposition 5.2 in \cite{BlickleBoeckleCartierModulesFiniteness} or Corollary 1.21 in \cite{SharpandYoshinoRightandleftmodulesoverFrobeniuspolynomialring}]
\label{matlis dual induce equivalence}
Let $(R,\m)$ be complete, local and $F$-finite. Then Matlis duality induces an equivalence of categories between $R\{F\}$-modules which are Artinian as $R$-modules and Cartier modules which are Noetherian as $R$-modules. The equivalence preserves nilpotence.
\end{theorem}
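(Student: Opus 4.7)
The plan is to reinterpret both Frobenius actions and Cartier structures as ordinary $R$-linear maps involving the restriction-of-scalars functor $F_*$, and then transport them through Matlis duality using a compatibility $(F_* M)^\vee \cong F_*(M^\vee)$ that is available because $R$ is $F$-finite. Throughout write $(-)^\vee = \Hom_R(-, E)$, where $E$ is the injective hull of $R/\m$; since $R$ is complete, this is an exact contravariant equivalence between Artinian and Noetherian $R$-modules, with quasi-inverse given by the same functor together with biduality $M \cong M^{\vee\vee}$.

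First I would reformulate the two structures categorically: a Frobenius action $F_M : M \to M$ satisfying $F_M(rm) = r^p F_M(m)$ is the same datum as an $R$-linear map $M \to F_* M$, where $F_* M$ denotes $M$ with $R$-action restricted along $F : R \to R$; symmetrically, a Cartier structure $C_N : N \to N$ with $C_N(r^p n) = r C_N(n)$ is exactly an $R$-linear map $F_* N \to N$. Applying Matlis duality to a Frobenius action $M \to F_* M$ thus produces an $R$-linear map $(F_* M)^\vee \to M^\vee$, and it suffices to identify $(F_* M)^\vee$ naturally with $F_*(M^\vee)$ so that the dual map becomes a Cartier structure on $M^\vee$; running the same argument starting from a Cartier structure on a Noetherian module $N$ yields a Frobenius action on $N^\vee$, and functoriality on morphisms is automatic.

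The compatibility $(F_* M)^\vee \cong F_*(M^\vee)$ is precisely where $F$-finiteness enters in an essential way. View the Frobenius as a module-finite ring map $F : A \to B$ with $A = B = R$; the standard Matlis duality for module-finite extensions of complete local rings gives $\Hom_A(B, E_A) \cong E_B$ as $B$-modules, and the adjunction $F_* \dashv F^!$ with $F^! = \Hom_A(B, -)$ yields $\Hom_A(F_* M, E_A) \cong \Hom_B(M, \Hom_A(B, E_A)) \cong \Hom_B(M, E_B) = M^\vee$, which restricted back along $F$ to an $A$-module is precisely $F_*(M^\vee)$. Preservation of nilpotence is then immediate: the $e$-fold iterate $F_M^e : M \to F^e_* M$ corresponds under the duality to $C_{M^\vee}^e : F^e_*(M^\vee) \to M^\vee$, and since Matlis duality is a faithful contravariant equivalence on the relevant categories, $F_M^e = 0$ if and only if $C_{M^\vee}^e = 0$. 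The main technical obstacle is obtaining the compatibility $(F_* M)^\vee \cong F_*(M^\vee)$, which fails in general without $F$-finiteness because $F_* R$ must be a finitely generated $R$-module in order for the Matlis dual to intertwine with restriction of scalars along the Frobenius; once that isomorphism is in hand, everything else is formal manipulation with adjunctions and biduality.
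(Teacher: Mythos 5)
Your argument is correct, and in fact the paper does not prove this statement at all: it is quoted verbatim from the literature (Proposition 5.2 of Blickle--B\"ockle, Corollary 1.21 of Sharp--Yoshino), so there is no internal proof to compare against. Your route --- encoding a Frobenius action as an $R$-linear map $M\to F_*M$ and a Cartier structure as an $R$-linear map $F_*N\to N$, then transporting one to the other through the natural isomorphism $(F_*M)^\vee\cong F_*(M^\vee)$ obtained from the adjunction $\Hom_R(F_*M,E)\cong\Hom_R(M,\Hom_R(F_*R,E))$ together with $\Hom_R(F_*R,E)\cong E$ for the finite local map $F$ --- is essentially the standard proof given in those references, and you correctly locate the two essential hypotheses: completeness (so that Matlis duality is a biduality-equipped equivalence between Artinian and Noetherian modules) and $F$-finiteness (so that $F_*R$ is module-finite, making $\Hom_R(F_*R,E)$ Artinian injective with one-dimensional socle, hence isomorphic to $E$). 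The only point you pass over quickly is that nilpotence preservation needs the identification $(F_*M)^\vee\cong F_*(M^\vee)$ to be natural in $M$, so that the dual of the $e$-fold composite $M\to F_*M\to\cdots\to F_*^eM$ is identified with the $e$-fold iterate of the dual Cartier map; since your isomorphism is assembled from natural adjunction isomorphisms, this is indeed automatic, but it is worth stating explicitly.
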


Now we start proving the main theorem of this section. We will use ${}^{\vee}$ to denote the Matlis dual with respect to $\m$ and ${}^{\vee_P}$ to denote the Matlis dual with respect to $PR_P$. It follows directly from local duality that for $(R,\m)$ a complete local ring, $(H_\m^i(R)^{\vee})_P^{\vee_P}\cong H_{PR_P}^{i-\dim R/P}(R_P)$. We begin by proving a lemma on Cartier modules.

\begin{lemma}
\label{simple cartier module localize}
We have the following:
\begin{enumerate}
\item If $M$ is a nilpotent Cartier module over $R$, then $M_P$ is a nilpotent Cartier module over $R_P$
\item If $(M, C)$ is a simple Cartier module over $R$ with a nontrivial $C$-action, then $(M_P, C_P)$ is a simple Cartier module over $R_P$, and if $M_P\neq 0$, then the $C_P$-action is also nontrivial.
\end{enumerate}
\end{lemma}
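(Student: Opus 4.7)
The plan is to handle the two parts separately; the main delicate point will be the nontriviality claim in (2).

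For part (1), the key step is to derive by induction on $e$ the iterated formula $C_P^e(x/r) = C^e(r^{p^e-1}x)/r$, which follows from the definition of $C_P$ together with the identity $s\cdot C^e(y) = C^e(s^{p^e}y)$ (an iterate of $sC(y) = C(s^py)$) used to absorb the extra factor of $r^{p-1}$ at each stage. Once this formula is in place, nilpotence of $C_P$ on $M_P$ is immediate: if $C^e(M)=0$ then the numerator vanishes for every $x\in M$ and $r\in R-P$, so $C_P^e(M_P)=0$.

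For the simplicity part of (2), I would pull back any Cartier submodule $N'\subseteq M_P$ to $N := \{x\in M : x/1\in N'\}\subseteq M$. Routine checks show $N$ is a Cartier submodule of $M$: it is an $R$-submodule because $N'$ is $R_P$-stable, and it is $C$-stable because $C_P(x/1) = C(x)/1$. Simplicity of $M$ then splits into two cases: if $N=M$, then $x/1\in N'$ for every $x\in M$, and multiplying by $1/r\in R_P$ gives $N'=M_P$; if $N=0$, then any $y=x/r\in N'$ yields $x/1 = ry\in N'$, forcing $x=0$ in $M$ and hence $y=0$, so $N'=0$.

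The nontriviality of $C_P$ under the assumption $M_P\neq 0$ is the most delicate point. I would argue by contradiction: if $C_P$ acts as zero on $M_P$, then for every $x\in M$ the element $C(x)/1$ vanishes in $M_P$, so there exists $s\in R-P$ with $sC(x)=0$. The critical observation is that the $P$-torsion submodule
\[T := \{y\in M : sy=0 \text{ for some } s\in R-P\}\]
is itself a Cartier submodule of $M$: from $sy=0$ one gets $s^py=0$ and hence $sC(y) = C(s^py) = 0$, so $C(y)\in T$. Since $M_P\neq 0$ forces $T\neq M$, simplicity of $M$ yields $T=0$, whence $C=0$ on $M$, contradicting the hypothesis. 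The main obstacle is precisely recognizing that $T$ is $C$-stable; this is where the $p^{-1}$-linearity of $C$ is used in an essential way, rather than mere $R$-linearity, and it is what ensures that the simplicity of $M$ faithfully controls the Cartier structure on $M_P$.
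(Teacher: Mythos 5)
Your proof is correct. Parts (1) and the simplicity half of (2) follow essentially the same route as the paper: the paper also dismisses (1) as immediate (your iterated formula $C_P^e(x/r)=C^e(r^{p^e-1}x)/r$ just makes explicit what "if $C^e$ kills $M$ then $C_P^e$ kills $M_P$" means), and the paper likewise contracts a Cartier $R_P$-submodule of $M_P$ back to $M$ and runs the same two-case analysis. Where you genuinely diverge is the nontriviality of $C_P$: the paper observes that a simple Cartier module with nontrivial action has \emph{surjective} $C$ (the image being a nonzero Cartier submodule), so $C_P$ is surjective on $M_P$ and hence nontrivial when $M_P\neq 0$; you instead argue by contradiction, showing that the $(R-P)$-torsion submodule $T=\ker(M\to M_P)$ is $C$-stable (using $sC(y)=C(s^py)$), so simplicity and $M_P\neq 0$ force $T=0$, and then $C_P=0$ would pull back to $C=0$. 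Both arguments are equally elementary and correct; the paper's surjectivity observation has the minor added benefit that it is reused verbatim later (in the proof of Theorem \ref{dual form of main result}, where the simple factors are noted to have "non-trivial, in particular surjective, $C$-action"), whereas your torsion-submodule argument isolates the cleaner structural fact that a simple Cartier module with $M_P\neq 0$ injects into its localization.
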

\begin{proof}
$(1)$ is obvious, because if $C^e$ kills $M$, then $C^e_P$ kills $M_P$. Now we prove $(2)$. Let $N$ be a Cartier $R_P$ submodule of $M_P$. Consider the contraction of $N$ in $M$, call it $N'$. Then it is easy to check that $N'$ is a Cartier $R$-submodule of $M$. So it is either $0$ or $M$ because $M$ is simple. But if $N'=0$ then $N=0$ and if $N'=M$ then $N=M_P$ because $N$ is an $R_P$-submodule of $M_P$. This proves $M_P$ is simple as a Cartier module over $R_P$. To see the last assertion, notice that if $M$ is a simple Cartier module with a nontrivial $C$-action, then $C$: $M\rightarrow M$ must be surjective: otherwise the image would be a proper Cartier submodule. Hence $C_P$: $M_P\rightarrow M_P$ is also surjective. But we assume $M_P\neq 0$, so $C_P$ is a nontrivial action.
\end{proof}

\begin{theorem}
\label{FH-finite length is equiv to stably FH-finite on punctured spec}
Let $(R,\m)$ be a complete and $F$-finite local ring. Then the following conditions are equivalent:
\begin{enumerate}
\item $R_P$ is stably FH-finite for every $P\in \Spec{R}-\{\m\}$.
\item $R$ has FH-finite length.
\end{enumerate}
\end{theorem}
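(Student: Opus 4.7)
The plan is to transfer the problem to the Cartier-module side via the equivalence of Theorem \ref{matlis dual induce equivalence}, where localization of the relevant modules is transparent, and then combine this with Proposition \ref{characterization of anti-nilpotency interms of Lyubeznik's filtration}.

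First I set $W_i=H_\m^i(R)$ and $M_i=W_i^\vee$. By Theorem \ref{matlis dual induce equivalence}, $M_i$ is a Noetherian Cartier module over $R$, and applying Theorem \ref{Lyubeznik's result} to $W_i$ and dualizing produces a finite filtration of $M_i$ whose successive quotients are alternately nilpotent Cartier modules (the Matlis duals of the $N_j/L_j$) and simple Cartier modules with nonzero action (the Matlis duals of the $L_j/N_{j-1}$). By Proposition \ref{characterization of anti-nilpotency interms of Lyubeznik's filtration}, $R$ has FH-finite length precisely when every nilpotent Cartier quotient appearing in this dual filtration has finite length over $R$.

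Next, for each $P\in\Spec R-\{\m\}$ I localize this filtration at $P$ to obtain a filtration of $(M_i)_P$ as a Cartier module over $R_P$; by Lemma \ref{simple cartier module localize} the nilpotent factors remain nilpotent, while simple factors become either zero or simple with nonzero action. Passing to the completion $\widehat{R_P}$ (which is complete and $F$-finite, so Theorem \ref{matlis dual induce equivalence} applies) and Matlis dualizing over $\widehat{R_P}$, I use the isomorphism $(H_\m^i(R)^{\vee})_P^{\vee_P} \cong H_{PR_P}^{i-\dim R/P}(R_P)$ recorded in the preamble to obtain a finite filtration of $H_{PR_P}^{i-\dim R/P}(R_P)$ whose nonzero successive quotients are either $F$-nilpotent or simple with nonzero Frobenius action. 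Applying Proposition \ref{characterization of anti-nilpotency interms of Lyubeznik's filtration} to $\widehat{R_P}$, this module is anti-nilpotent if and only if all $F$-nilpotent quotients in such a filtration vanish; as $i$ varies, all cohomological degrees for $R_P$ are accounted for.

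With this setup, both directions follow cleanly. For $(2)\Rightarrow(1)$: if $R$ has FH-finite length, every nilpotent Cartier quotient of $M_i$ has finite $R$-length, hence is supported only at $\m$ and vanishes upon localizing at any $P\neq\m$; so each $H_{PR_P}^j(R_P)$ is anti-nilpotent, and Theorem \ref{anti-nilpotency is equiv to stably FH finite} together with Lemma \ref{reduce to F-split} yield that $R_P$ is stably FH-finite. Conversely, for $(1)\Rightarrow(2)$: if $R$ does not have FH-finite length, some nilpotent Cartier quotient $X$ of some $M_i$ has positive-dimensional $R$-support, so there is $P\neq\m$ in $\supp X$; then $X_P$ is a nonzero nilpotent Cartier quotient of $(M_i)_P$ that remains nonzero after completion and Matlis-dualizes to a nonzero $F$-nilpotent quotient in the filtration of $H_{PR_P}^{i-\dim R/P}(R_P)$, obstructing anti-nilpotency and hence obstructing stable FH-finiteness of $R_P$. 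The main obstacle I expect is the bookkeeping in the previous paragraph: verifying that localizing-then-completing the Cartier filtration of $M_i$ and Matlis-dualizing over $\widehat{R_P}$ really does produce, as claimed, a filtration of $H_{PR_P}^{i-\dim R/P}(R_P)$ of the type governed by Proposition \ref{characterization of anti-nilpotency interms of Lyubeznik's filtration}; this reduces to compatibility of Matlis duality with localization and completion via the stated local-duality isomorphism, combined with the behavior of simple and nilpotent Cartier modules under localization recorded in Lemma \ref{simple cartier module localize}.
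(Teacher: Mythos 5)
Your proposal is correct and follows essentially the same route as the paper: dualize Lyubeznik's filtration of $H_\m^i(R)$ into a Cartier-module filtration, localize at $P$ using Lemma \ref{simple cartier module localize}, dualize back via $(H_\m^i(R)^{\vee})_P^{\vee_P}\cong H_{PR_P}^{i-\dim R/P}(R_P)$, and compare the nilpotent factors with the finite-length criterion of Proposition \ref{characterization of anti-nilpotency interms of Lyubeznik's filtration}. The only cosmetic differences are that you pass explicitly through $\widehat{R_P}$ before dualizing and argue the two implications separately, whereas the paper dualizes over $R_P$ directly and runs a single chain of equivalences.
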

\begin{proof}
By Theorem \ref{Lyubeznik's result}, for every $H_\m^i(R)$, $0\leq i\leq d$, we have a filtration
\begin{equation}
\label{filtration of R[F]-modules}
0= L_0\subseteq N_0\subseteq L_1\subseteq N_1\subseteq \cdots\subseteq L_s\subseteq N_s=H_\m^i(R)
\end{equation}
of $R\{F\}$-modules such that every $N_j/L_j$ is $F$-nilpotent while every $L_j/N_{j-1}$ is simple in the category of $R\{F\}$-modules, with nontrivial $F$-action. Now we take the Matlis dual of the above filtration (\ref{filtration of R[F]-modules}), we have \[H_\m^i(R)^{\vee}=N_s^{\vee}\twoheadrightarrow L_s^{\vee}\twoheadrightarrow \cdots \twoheadrightarrow N_0^{\vee}\twoheadrightarrow L_0^{\vee}= 0\] such that each $\ker(L_j^{\vee}\twoheadrightarrow N_{j-1}^{\vee})$ is Noetherian as an $R$-module and is simple as a Cartier module with nontrivial $C$-action, and each $\ker(N_j^{\vee}\twoheadrightarrow L_j^{\vee})$ is Noetherian as an $R$-module and is nilpotent as a Cartier module by Lemma \ref{matlis dual induce equivalence}. When we localize at $P\neq \m$, we get \[(H_\m^i(R)^{\vee})_P=(N_s^{\vee})_P\twoheadrightarrow (L_s^{\vee})_P\twoheadrightarrow \cdots \twoheadrightarrow (N_0^{\vee})_P\twoheadrightarrow(L_0^{\vee})_P= 0\] with each $\ker((L_j^{\vee})_P\twoheadrightarrow (N_{j-1}^{\vee})_P)$ a simple Cartier module over $R_P$ whose $C_P$ action is nontrivial if it is nonzero, and each $\ker((N_j^{\vee})_P\twoheadrightarrow (L_{j}^{\vee})_P)$ a nilpotent Cartier module over $R_P$ by Lemma \ref{simple cartier module localize}. Now, when we take the Matlis dual over $R_P$, we get a filtration of $R_P\{F\}$-modules
\begin{equation}
\label{dualized filtration}
0= L'_0\subseteq N'_0\subseteq  L'_1\subseteq \cdots\subseteq L'_s\subseteq N'_s=H_{PR_P}^{i-\dim R/P}(R_P)
\end{equation}
where $L'_j=(L_j^{\vee})_P^{\vee_P}$,  $N'_j=(N_j^{\vee})_P^{\vee_P}$, $N'_j/L'_j$ is $F$-nilpotent and each $L'_j/N'_{j-1}$ is either $0$ or simple as an $R_P\{F\}$-module with nontrivial $F$-action by Lemma \ref{matlis dual induce equivalence} again. And we notice that
\begin{eqnarray*}
\label{chain of equivalence relations}
&&N'_j/L'_j=0, \forall P\in \Spec R-\{\m\}\\
&\Leftrightarrow&(N_j^{\vee})_P^{\vee_P}/(L_j^{\vee})_P^{\vee_P}=0, \forall P\in \Spec R-\{\m\}\\
&\Leftrightarrow& \ker((N_j^{\vee})_P\twoheadrightarrow(L_j^{\vee})_P)=0, \forall P\in\Spec R-\{\m\}\\
&\Leftrightarrow& l_R(\ker(N_j^{\vee}\twoheadrightarrow L_j^{\vee}))<\infty\\
&\Leftrightarrow& l_R(N_j/L_j)<\infty
\end{eqnarray*}

$R_P$ is stably FH-finite for every $P\in \Spec R-\{\m\}$ if and only if $H_{PR_P}^{i-\dim R/P}(R_P)$ is anti-nilpotent for every $0\leq i\leq d$ and every $P\in \Spec R-\{\m\}$. This is because when $0\leq i\leq d$, $i-\dim R/P$ can take all values between $0$ and $\dim R_P$, and if $i-\dim R/P$ is out of this range, then the local cohomology is $0$ so it is automatically anti-nilpotent. By Proposition \ref{characterization of anti-nilpotency interms of Lyubeznik's filtration} $(2)$, this is equivalent to the condition that for every $P\in \Spec R-\{\m\}$, the corresponding $N'_j/L'_j$ is $0$. By the above chain of equivalence relations, this is equivalent to the condition that each $N_j/L_j$ have finite length as an $R$-module. By Proposition \ref{characterization of anti-nilpotency interms of Lyubeznik's filtration} $(1)$, this is equivalent to the condition that $R$ have FH-finite length.
\end{proof}

\begin{corollary}
\label{F-pure on the punctured spec implies FH-finite length for complete F-finite}
Let $(R,\m)$ be a complete and $F$-finite local ring. If $R_P$ is $F$-pure for every $P\neq\m$, then $R$ has FH-finite length.
\end{corollary}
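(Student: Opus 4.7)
The plan is to reduce the corollary directly to the two main ingredients already established in the excerpt: Theorem \ref{FH-finite length is equiv to stably FH-finite on punctured spec}, which characterizes FH-finite length for complete $F$-finite local rings in terms of stable FH-finiteness on the punctured spectrum, and Theorem \ref{F-pure implies FH-finite}, which says that any $F$-pure local ring is stably FH-finite. Since the hypothesis $R_P$ is $F$-pure for every $P \neq \m$ is exactly what is needed to invoke the second theorem pointwise on the punctured spectrum, the corollary should follow almost formally.

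Concretely, I would proceed as follows. First, observe that because $R$ is assumed to be complete and $F$-finite, Theorem \ref{FH-finite length is equiv to stably FH-finite on punctured spec} applies: it reduces the desired conclusion ``$R$ has FH-finite length'' to verifying that $R_P$ is stably FH-finite for every $P \in \Spec R - \{\m\}$. Second, fix such a prime $P$. By hypothesis $R_P$ is $F$-pure, and Theorem \ref{F-pure implies FH-finite} states that any $F$-pure local ring is stably FH-finite. Applying this theorem to $R_P$ gives precisely the required property. Combining these two steps yields the conclusion.

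There is no substantive obstacle here; the only subtlety is making sure the hypotheses of each theorem we invoke are satisfied. For Theorem \ref{FH-finite length is equiv to stably FH-finite on punctured spec} we need $R$ complete and $F$-finite, which is given. For Theorem \ref{F-pure implies FH-finite} applied to $R_P$ we only need $R_P$ to be an $F$-pure (Noetherian) local ring of characteristic $p$; no completeness or $F$-finiteness of $R_P$ is required, since Theorem \ref{F-pure implies FH-finite} itself was reduced to the $F$-split case via Lemma \ref{reduce to F-split} by passing to the completion. Hence the proof is essentially a two-line citation argument, with the bulk of the technical work having already been done in Section 3 and in Theorem \ref{FH-finite length is equiv to stably FH-finite on punctured spec}.
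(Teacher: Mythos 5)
Your proposal is correct and is essentially the paper's own proof: the paper also deduces the corollary directly by combining Theorem \ref{F-pure implies FH-finite} (applied to each $R_P$, $P\neq\m$) with Theorem \ref{FH-finite length is equiv to stably FH-finite on punctured spec}. Your extra remark that $R_P$ need not be complete or $F$-finite for the first theorem is accurate and consistent with how the paper uses it.
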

\begin{proof}
This is clear from Theorem \ref{F-pure implies FH-finite} and Theorem \ref{FH-finite length is equiv to stably FH-finite on punctured spec}.
\end{proof}

We end this section by proving the dual result of Theorem \ref{F-pure implies FH-finite} and Corollary \ref{F-pure on the punctured spec implies FH-finite length for complete F-finite} for Cartier modules. Our result holds for a large class of $F$-finite rings (not necessarily local). So from now on till the end of this section, we only require that $R$ be an $F$-finite Noetherian ring of equal characteristic $p>0$.

We first recall an important result in \cite{BlickleBoeckleCartierModulesFiniteness} on Cartier modules. This can be viewed as the dual form of Theorem \ref{Lyubeznik's result} in the local $F$-finite case. However, we emphasize here that this result holds for all $F$-finite rings (not necessarily local).
\begin{theorem}[cf. Proposition 4.23 in \cite{BlickleBoeckleCartierModulesFiniteness}]
\label{Cartier module filtration}
Let $M$ be a Cartier module which is Noetherian over an $F$-finite ring $R$. Then there exists a finite sequence
\begin{equation}
\label{Cartier filtration}
M=M_0\supseteq\underline{M_0}\supseteq M_1\supseteq \underline{M_1}\supseteq\cdots\supseteq M_s\supseteq\underline{M_s}=0
\end{equation}
such that each $M_i/\underline{M_i}$ is nilpotent and each $\underline{M_i}/M_{i+1}$ is simple with a non-trivial $C$-action.
\end{theorem}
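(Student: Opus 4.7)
The plan is to build the filtration by iteratively alternating two operations: (i) passing from a Noetherian Cartier module $N$ to its ``stable image'' $\underline{N} := \bigcap_{e\geq 0} C^e(N)$, peeling off a nilpotent quotient $N/\underline{N}$ on top; and (ii) on a Cartier module $M'$ satisfying $C(M')=M'$, picking a maximal proper Cartier submodule $K$, exposing a simple Cartier quotient $M'/K$ with non-trivial $C$-action. Starting from $M=M_0$, alternating (i) and (ii) yields the chain $M_0 \supseteq \underline{M_0} \supseteq M_1 \supseteq \underline{M_1} \supseteq \cdots$ of the statement.

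For step (i), observe first that $C(N)$ is an $R$-submodule of $N$: for any $r\in R$ and $y=C(x)\in C(N)$ we have $ry=rC(x)=C(r^p x)\in C(N)$. Hence the iterates $C^e(N)$ form a descending chain of $R$-submodules. A key non-formal input is that this chain stabilizes, so that $\underline{N}=C^{e_0}(N)$ for some $e_0\gg 0$; Noetherianness of $N$ only yields ACC on $R$-submodules, so this stabilization must be extracted from $F$-finiteness of $R$ together with properties of Cartier modules (it is one of the basic facts established early in \cite{BlickleBoeckleCartierModulesFiniteness}). Granted stabilization, $C(\underline{N})=\underline{N}$ and $C^{e_0}$ annihilates $N/\underline{N}$, so $N/\underline{N}$ is nilpotent. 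For step (ii), ACC on Cartier submodules of $M'$ (inherited from ACC on $R$-submodules) provides a maximal proper Cartier submodule $M_{i+1}\subsetneq \underline{M_i}=M'$; by maximality the quotient is simple as a Cartier module, and surjectivity of $C$ on $M'$ descends to a surjective (hence non-trivial) $C$-action on $M'/M_{i+1}$.

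The main obstacle is to show the alternating iteration terminates after finitely many steps. Because $M$ is Noetherian rather than Artinian over $R$, DCC on $R$-submodules is unavailable, and termination must come from finer structural features of Cartier modules over $F$-finite rings developed in \cite{BlickleBoeckleCartierModulesFiniteness}: the associated primes of each simple Cartier subquotient $\underline{M_i}/M_{i+1}$ lie in $\mathrm{Ass}_R(M)$, and for each such prime only finitely many (isomorphism classes of) simple Cartier subquotients can appear. Combining these with Noetherian induction on $\Supp(M)$ forces the chain to stop. As a conceptually cleaner alternative in the complete local $F$-finite case, one could instead apply Matlis duality to turn $M$ into an Artinian $R\{F\}$-module, invoke Theorem \ref{Lyubeznik's result} to obtain the corresponding ascending $R\{F\}$-filtration, and dualize back via Theorem \ref{matlis dual induce equivalence}, which preserves both nilpotence and simplicity, recovering a finite descending Cartier filtration of the required form.
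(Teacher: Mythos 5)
A preliminary remark: the paper itself gives no proof of this statement --- it is quoted from Proposition 4.23 of \cite{BlickleBoeckleCartierModulesFiniteness} --- so your proposal has to stand on its own. Its skeleton is the natural one and matches the shape of the filtration: alternately replace a module $N$ by its stable image $\underline{N}=C^{e}(N)$ for $e\gg 0$ (peeling off a nilpotent quotient) and pass from a module with surjective $C$-action to a maximal proper Cartier submodule (peeling off a simple quotient with surjective, hence non-trivial, $C$-action). Treating the stabilization of the descending chain $C(N)\supseteq C^{2}(N)\supseteq\cdots$ as a quoted input is defensible: it is indeed a genuine, non-formal theorem proved early in \cite{BlickleBoeckleCartierModulesFiniteness} and is logically prior to the statement at hand. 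Likewise your use of ACC to produce a maximal proper Cartier submodule, and the descent of surjectivity of $C$ to the quotient, are correct.

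The genuine gap is the termination of the alternating process, and this is not a loose end but the entire content of the theorem: the existence of a finite chain of this shape is exactly the assertion that $M$ has finite length in the category of Cartier modules up to nilpotence, which is the main finiteness theorem of \cite{BlickleBoeckleCartierModulesFiniteness}, proved there by induction on the dimension of the support together with a delicate analysis at the generic points (a Kashiwara-type reduction and finiteness statements for Cartier submodules of modules with surjective $C$). Your sketch does not substitute for this. The claim that the associated primes of the simple subquotients $\underline{M_i}/M_{i+1}$ lie in $\mathrm{Ass}_R(M)$ is unjustified (subquotients acquire new associated primes in general; the correct finiteness statement about associated primes of Cartier subquotients is in \cite{BlickleBoeckleCartierModulesFiniteness} obtained together with, or as a consequence of, the finite-length theorem, so invoking it here is essentially circular). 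Moreover, even granting finitely many isomorphism classes of simple subquotients for each relevant prime, this does not bound the number of steps: a filtration may repeat the same simple factor arbitrarily often, just as a finite-length module over a local ring has all factors isomorphic to the residue field but unbounded length; and ``Noetherian induction on $\Supp(M)$'' does not engage, since consecutive factors need not have strictly smaller support. Finally, the fallback via Matlis duality, Theorem \ref{matlis dual induce equivalence} and Theorem \ref{Lyubeznik's result}, only yields the statement for complete local $F$-finite rings, whereas the theorem is stated --- and is used in Theorem \ref{dual form of main result} --- for arbitrary $F$-finite rings; the paper explicitly stresses this extra generality, and no reduction of the general case to the complete local case is offered.
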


\begin{remark}
\label{correspondence between Cartier filtration and Lyubeznik's filtration}
When $M$ is a Cartier module which is Noetherian over an $F$-finite {\it local} ring $R$, it is straightforward to check (or using Theorem \ref{matlis dual induce equivalence}) that the Matlis dual of the sequence (\ref{Cartier filtration}) gives a filtration of $M^{\vee}$ by $R\{F\}$-modules as (\ref{Lyubeznik's filtration}) in Theorem \ref{Lyubeznik's result} (basically, $N_i=\ker(M_0^\vee\rightarrow \underline{M_i}^\vee)$ and $L_i=\ker(M_0^\vee\rightarrow M_i^\vee)$). Moreover, it is easy to see that $(M_i/\underline{M_i})^{\vee}$ corresponds to $N_i/L_i$.
\end{remark}

For any $F$-finite ring $R$, by results of Gabber \cite{Gabber.tStruc}, there exists a (normalized) dualizing complex $\omega_R^{\bullet}$ of $R$ (we refer to \cite{HartshorneResidues} for a detailed definition of the normalized dualizing complex). We define the canonical module $\omega_R$ to be $h^{-d}\omega_R^\bullet$ where $d=\dim R$. When $R$ is Cohen-Macaulay, $\omega_R$ is the only non-trivial cohomology of $\omega_R^\bullet$. And when $(R,\m)$ is $F$-finite and local, this is the usual definition of canonical module (i.e., a finitely generated module whose Matlis dual is $H_\m^d(R)$).

Applying the results in \cite{HartshorneResidues} to the Frobenius map $F$: $R\rightarrow R$, we know that there exists a canonical trace map $F_*F^{!}\omega_R^\bullet \rightarrow \omega_R^\bullet$ where $F_*$ denotes the Frobenius pushforward and $F^{!}\omega_R^\bullet=\mathbf{R}\Hom_R(F_*R, \omega_R^\bullet)$ (see also section 2.4 in \cite{BlickleBoeckleCartierModulesFiniteness}). In particular, we see that if $F^{!}\omega_R^\bullet\cong\omega_R^\bullet$, then there exists a canonical Cartier module structure on $\omega_R$, and when we localize at any prime $P\in \Spec R$, this Cartier module structure is the dual of the canonical $R_P\{F\}$-module structure of $H_{PR_P}^{\height P}(R_P)$.

\begin{remark}
To the best of our knowledge, it is still unknown whether $F^{!}\omega_R^\bullet\cong\omega_R^\bullet$ is true for all $F$-finite rings. It is always true that $F^{!}\omega_R^\bullet$ is a dualizing complex of $R$ (see \cite{HartshorneResidues}), so in our case $F^{!}\omega_R^\bullet$ differs from $\omega_R^\bullet$ by tensoring a rank 1 projective module. However, in many cases we do have $F^{!}\omega_R^\bullet\cong\omega_R^\bullet$. In \cite{BlickleBoeckleCartierModulesFiniteness}, it is proved that for an $F$-finite ring $R$, $F^{!}\omega_R^\bullet\cong\omega_R^\bullet$ holds if either $R$ is essentially of finite type over a Gorenstein local ring or $R$ is sufficiently affine. We refer to Proposition 2.20 and Proposition 2.21 in \cite{BlickleBoeckleCartierModulesFiniteness} as well as \cite{HartshorneResidues} for more details on this question.
\end{remark}

\begin{theorem}
\label{dual form of main result}
Let $R$ be an $F$-finite ring of dimension $d$. Let $\omega_R^\bullet$ be the normalized dualizing complex of $R$ and $\omega_R\cong h^{-d}\omega_R^\bullet$ be the canonical module of $R$. Suppose $F^{!}\omega_R^\bullet\cong\omega_R^\bullet$ (for example if $R$ is essentially of finite type over an $F$-finite field), then we have the following:
\begin{enumerate}
\item If $R$ is $F$-pure, then $\omega_R$ only has finitely many Cartier submodules.
\item If $R_P$ is $F$-pure for every non-maximal prime $P$, then $\omega_R$ has finite length in the category of Cartier modules.
\end{enumerate}
\end{theorem}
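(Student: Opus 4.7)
The strategy is to dualize Theorem~\ref{F-pure implies FH-finite} and Corollary~\ref{F-pure on the punctured spec implies FH-finite length for complete F-finite} via Matlis duality (Theorem~\ref{matlis dual induce equivalence}), applied locally at each prime of $R$, and to read the global finiteness off from the Cartier filtration of $\omega_R$. The assumption $F^{!}\omega_R^\bullet\cong\omega_R^\bullet$ makes the canonical Cartier structure on $\omega_R$ behave well under localization: at each prime $\bp$, the Matlis dual of $(\omega_R)_\bp^\wedge$ recovers the top local cohomology of $\widehat{R_\bp}$, and the Cartier filtration corresponds under duality to Lyubeznik's filtration via Remark~\ref{correspondence between Cartier filtration and Lyubeznik's filtration}.

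First I would apply Theorem~\ref{Cartier module filtration} to $\omega_R$ to obtain
\[
\omega_R = M_0 \supseteq \underline{M_0} \supseteq M_1 \supseteq \underline{M_1} \supseteq \cdots \supseteq \underline{M_s} = 0,
\]
with $M_i/\underline{M_i}$ nilpotent and $\underline{M_i}/M_{i+1}$ simple with a nontrivial $C$-action. For (1), since $R$ is $F$-finite and $F$-pure, each $\widehat{R_\m}$ is $F$-pure, so by Theorem~\ref{F-split implies anti-nilpotency} every $H_\m^i(\widehat{R_\m})$ is anti-nilpotent. Proposition~\ref{characterization of anti-nilpotency interms of Lyubeznik's filtration}(2) together with Remark~\ref{correspondence between Cartier filtration and Lyubeznik's filtration} then forces the nilpotent Cartier factors of $(\omega_R)_\m^\wedge$ to vanish for every maximal $\m$, and by faithful flatness $M_i/\underline{M_i} = 0$ globally. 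The filtration of $\omega_R$ now has only simple factors; combining this with the local finiteness of Cartier submodules of each $(\omega_R)_\m^\wedge$ (the Matlis dual of Theorem~\ref{F-pure implies FH-finite} applied to $\widehat{R_\m}$), I would conclude that a Cartier submodule $N \subseteq \omega_R$ is determined by the pattern of whether $(N \cap M_i + M_{i+1})/M_{i+1}$ equals $0$ or $M_i/M_{i+1}$ (finitely many patterns), together with local Cartier data at the finitely many maximal ideals lying in the union of the supports of the simple factors.

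For (2), with (1) in hand I would apply it to each $R_P$ at a non-maximal prime $P$ (which is $F$-pure and $F$-finite by hypothesis), concluding that the Cartier filtration of $\omega_{R_P}$ has no nilpotent factors. Since localization of a Cartier filtration is a Cartier filtration of the localization (Lemma~\ref{simple cartier module localize}), this gives $(M_i/\underline{M_i})_P = 0$ for every non-maximal $P$; hence each nilpotent factor $M_i/\underline{M_i}$ is supported only on closed points of $\Spec R$, and being a subquotient of the Noetherian module $\omega_R$, it has finite length as an $R$-module. The Cartier analogue of Proposition~\ref{characterization of anti-nilpotency interms of Lyubeznik's filtration}(1)---namely, that finite $R$-length of each nilpotent filtration factor is equivalent to finite length of the module in the category of Cartier modules---then yields (2). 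The main obstacle is the global gluing step in (1): since $R$ may have infinitely many maximal ideals, the argument must use the finiteness of the Cartier filtration together with the fact that each simple factor is supported on a single irreducible closed subset to reduce the gluing to finite data, and checking that compatible local Cartier data pins down a global Cartier submodule is the crux of the proof.
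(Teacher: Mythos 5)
Your part (2) and the first half of (1) follow the paper's route: take the filtration from Theorem \ref{Cartier module filtration}, localize, and use $F$-purity of the local rings (via Matlis duality over the $F$-finite local ring together with Theorem \ref{F-split implies anti-nilpotency}, Proposition \ref{characterization of anti-nilpotency interms of Lyubeznik's filtration} and Remark \ref{correspondence between Cartier filtration and Lyubeznik's filtration}) to force the nilpotent factors $M_i/\underline{M_i}$ to vanish, respectively to be supported only at maximal ideals and hence of finite $R$-length; adding up lengths along the filtration then gives (2). That much is sound and matches the paper.

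The genuine gap is the final step of (1): from a filtration of $\omega_R$ with simple Cartier factors you never actually obtain finiteness of the set of Cartier submodules, and the gluing you sketch would fail as stated. A Cartier submodule $N\subseteq\omega_R$ is \emph{not} determined by the pattern recording which factors $(N\cap M_i+M_{i+1})/M_{i+1}$ are zero or full: when two factors are isomorphic, distinct (possibly infinitely many) submodules realize the same pattern, which is exactly why this finiteness statement is nontrivial. Moreover, ``the finitely many maximal ideals lying in the union of the supports of the simple factors'' is not available: a simple Cartier module typically has irreducible support of positive dimension (for instance $\omega_R$ itself when $R$ is a strongly $F$-regular domain), hence containing infinitely many maximal ideals, so there is no finite collection of local data to glue; you yourself flag that pinning down a global submodule from local data is ``the crux,'' and that crux is precisely what is missing. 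The paper avoids any gluing: since every successive quotient of the filtration is simple with surjective $C$-action, an easy induction along the filtration shows that \emph{every} Cartier submodule $N$ of $\omega_R$ satisfies $C(N)=N$, and then Corollary 4.20 of \cite{BlickleBoeckleCartierModulesFiniteness} — finiteness of the set of Cartier submodules on which $C$ acts surjectively, for a coherent Cartier module — gives the conclusion. Without that global finiteness theorem (or an equivalent substitute), your argument for (1) does not close.
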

\begin{proof}
By the discussion above, there exists a canonical Cartier module structure on $\omega_R$. So by Theorem \ref{Cartier module filtration}, we have a finite sequence of Cartier modules:
\begin{equation}
\label{cartier module filtration}
\omega_R=M_0\supseteq\underline{M_0}\supseteq M_1\supseteq \underline{M_1}\supseteq\cdots\supseteq M_s\supseteq\underline{M_s}=0.
\end{equation}
For every maximal ideal $\m\in\Spec R$, the above sequence localizes to give a corresponding sequence of Cartier modules over $R_\m$:
\begin{equation}
\label{localized cartier module filtration}
\omega_{R_\m}=(M_0)_\m\supseteq(\underline{M_0})_\m\supseteq (M_1)_\m\supseteq (\underline{M_1})_\m\supseteq\cdots\supseteq (M_s)_\m\supseteq(\underline{M_s})_\m=0.
\end{equation}

If $R$ is $F$-pure, then we claim that in (\ref{cartier module filtration}), $M_i/\underline{M_i}=0$ for every $i$. If not, pick $\m\in\Spec R$ such that $(M_i/\underline{M_i})_\m\neq0$. We look at the corresponding sequence of Cartier modules in (\ref{localized cartier module filtration}). Since $R_\m$ is local, by Remark \ref{correspondence between Cartier filtration and Lyubeznik's filtration}, the Matlis dual of this sequence gives a filtration of $\omega_{R_\m}^{\vee_m}$ of $R_\m\{F\}$-modules which corresponds to the filtration (\ref{Lyubeznik's filtration}) in Theorem \ref{Lyubeznik's result}. In particular, we know that $\omega_{R_\m}^{\vee_m}\cong H_\m^d(R_\m)$ has a filtration as in Theorem \ref{Lyubeznik's result} with some $N_i/L_i\neq0$. This shows that $H_\m^d(R_\m)$ is not anti-nilpotent by Proposition \ref{characterization of anti-nilpotency interms of Lyubeznik's filtration}. But this contradicts the condition that $R_\m$ be $F$-split by Theorem \ref{F-split implies anti-nilpotency} (note that here $F$-pure is the same as $F$-split since $R_\m$ is $F$-finite). A very similar argument shows that if $R_P$ is $F$-pure for every non-maximal prime $P\in\Spec R$, then $M_i/\underline{M_i}$ is supported only at the maximal ideals of $R$ for every $i$, in particular, each $M_i/\underline{M_i}$ has finite length as an $R$-module. Combine with (\ref{cartier module filtration}), we already proved (2).

For (1), notice that $M_i/\underline{M_i}=0$ for every $i$ implies that $\omega_R$ has a Cartier module filtration such that each successive quotient is a simple Cartier module with non-trivial (in particular, surjective) $C$-action. It follows that every Cartier submodule $N$ of $\omega_R$ satisfies $C(N)=N$. Now it follows from Corollary 4.20 in \cite{BlickleBoeckleCartierModulesFiniteness} that $\omega_R$ only has finitely many Cartier submodules.

\end{proof}

\section{F-purity on the punctured spectrum implies FH-finite length for excellent local rings}
In this section we prove that for excellent local rings, $F$-purity on the punctured spectrum implies FH-finite length. In fact, if we assume $R$ is complete and $F$-finite, this is exactly Corollary \ref{F-pure on the punctured spec implies FH-finite length for complete F-finite}. Our main point is to make use of the $\Gamma$-construction introduced in \cite{HochsterHunekeFRegularityTestElementsBaseChange} to prove the general case. We also prove that the properties such as having FH-finite length, being FH-finite, and being stably FH-finite localize. We start with a brief review on the $\Gamma$-construction. We refer to \cite{HochsterHunekeFRegularityTestElementsBaseChange} for details.

Let $K$ be a field of positive characteristic $p>0$ with a $p$-base $\Lambda$. Let $\Gamma$ be a fixed cofinite subset of $\Lambda$. For $e\in \mathbb{N}$ we denote by $K^{\Gamma,e}$ the purely inseparable field extension of $K$ that is the result of adjoining $p^e$-th roots of all elements in $\Gamma$ to $K$.

Now suppose that $(R,\m)$ is a complete local ring with $K\subseteq R$ a coefficient field. Let $x_1,\dots,x_d$ be a system of parameters for $R$, so that $R$ is module-finite over $A=K[[x_1,\dots,x_d]]\subseteq R$. Let $A^\Gamma$ denote \[\bigcup_{e\in\mathbb{N}}K^{\Gamma,e}[[x_1,\dots,x_d]],\] which is a regular local ring that is faithfully flat and purely inseparable over $A$. The maximal ideal of $A$ expands to that of $A^{\Gamma}$. Let $R^{\Gamma}$ denote $A^{\Gamma}\otimes_AR$, which is module-finite over the regular ring $A^{\Gamma}$ and is faithfully flat and purely inseparable over $R$. The maximal ideal of $R$ expands to the maximal ideal of $R^{\Gamma}$. The residue field of $R^{\Gamma}$ is $K^{\Gamma}=\bigcup_{e\in\mathbb{N}}K^{\Gamma,e}$. It is of great importance that $R^{\Gamma}$ is $F$-finite. Moreover, we can preserve some good properties of $R$ if we choose a sufficiently small cofinite subset $\Gamma$:
\begin{lemma}[{\it cf.} Lemma 6.13 in \cite{HochsterHunekeFRegularityTestElementsBaseChange}]
\label{gamma construction preserves prime}
Let $R$ be a complete local ring. If $P$ is a prime ideal of $R$ then there exists a cofinite set $\Gamma_0\subseteq \Lambda$ such that $Q=PR^\Gamma$ is a prime ideal in $R^\Gamma$ for all $\Gamma\subseteq \Gamma_0$.
\end{lemma}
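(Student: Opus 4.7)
The plan is to compute $R^{\Gamma}/PR^{\Gamma}$ directly and reduce the claim to a linear disjointness statement at the residue field. Since $A^{\Gamma}$ is faithfully flat over $A$, flatness gives
\[
R^{\Gamma}/PR^{\Gamma}\;=\;A^{\Gamma}\otimes_A(R/P).
\]
Setting $\bar R:=R/P$, which is a complete local domain module-finite over $A$ (through $A/(P\cap A)$), the task becomes: show $A^{\Gamma}\otimes_A\bar R$ is a domain for all $\Gamma$ in some cofinite $\Gamma_0\subseteq\Lambda$.

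Next I would observe that $\bar R\to A^{\Gamma}\otimes_A\bar R$ is integral and purely inseparable; these properties pass from $A\to A^{\Gamma}$ through the module-finiteness of $\bar R$. Consequently $\Spec(A^{\Gamma}\otimes_A\bar R)\to\Spec\bar R$ is a universal homeomorphism, so $A^{\Gamma}\otimes_A\bar R$ has a unique minimal prime over $(0)$ and being a domain is equivalent to being reduced. By flatness, $A^{\Gamma}\otimes_A\bar R$ embeds into $A^{\Gamma}\otimes_A L$ with $L:=\Frac(\bar R)$, and writing $A^{\Gamma}=\varinjlim_e K^{\Gamma,e}[[x_1,\dots,x_d]]$ it is enough to verify that $L\otimes_K K^{\Gamma,e}$ is a field for every $e$ and every $\Gamma\subseteq\Gamma_0$.

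The heart of the argument is then a linear disjointness computation. The field $L$ is a finitely generated extension of $K$, so its maximal purely inseparable subextension over $K$ is generated by $p^e$-th roots of only finitely many elements of the $p$-base $\Lambda$; let $S\subseteq\Lambda$ denote this finite subset and set $\Gamma_0:=\Lambda\setminus S$. For any cofinite $\Gamma\subseteq\Gamma_0$, the extension $K^{\Gamma,e}/K$ adjoins $p^e$-th roots of elements disjoint from $S$, and a standard $p$-base argument yields that $L$ and $K^{\Gamma,e}$ are linearly disjoint over $K$. Hence $L\otimes_K K^{\Gamma,e}$ is a field, $A^{\Gamma}\otimes_A\bar R$ is reduced and therefore a domain, and $PR^{\Gamma}$ is prime.

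The principal obstacle is this final linear disjointness step, which must be handled carefully in the presence of a possibly infinite $p$-base $\Lambda$: one needs to locate the finitely many $p$-base directions actually used by the finitely generated extension $L/K$ and invoke the standard $p$-base machinery to produce $\Gamma_0$. This is precisely the content of Lemma~6.13 in \cite{HochsterHunekeFRegularityTestElementsBaseChange}, whose proof I would ultimately quote.
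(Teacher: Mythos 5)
The paper offers no argument for this lemma at all: it is quoted verbatim from Hochster--Huneke (Lemma 6.13 of \cite{HochsterHunekeFRegularityTestElementsBaseChange}), so a bare citation is what the paper itself does, and your final sentence does ultimately defer to that source. Your preliminary reductions are fine: flatness of $A\to A^{\Gamma}$ gives $R^{\Gamma}/PR^{\Gamma}\cong A^{\Gamma}\otimes_A(R/P)$, pure inseparability gives a unique minimal prime so that ``domain'' is equivalent to ``reduced'', and flatness embeds $A^{\Gamma}\otimes_A(R/P)$ into $A^{\Gamma}\otimes_A L$ with $L=\Frac(R/P)$.

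However, the sketch you give for the remaining step contains genuine gaps. First, $L$ is \emph{not} a finitely generated field extension of $K$ once $\dim R/P\geq 1$ (already $\Frac(K[[x_1]])$ has infinite transcendence degree over $K$), so the claim that the purely inseparable part of $L/K$ involves only finitely many elements of the $p$-base $\Lambda$ does not follow as stated; isolating the finite ``bad'' subset of $\Lambda$ is exactly the nontrivial content, and in Hochster--Huneke it comes from the module-finiteness of $R/P$ over $A$ (so that $L$ is finite over $\Frac(A)$), not from finite generation of $L$ over $K$. Second, the reduction to ``$L\otimes_K K^{\Gamma,e}$ is a field'' is not valid: $A^{\Gamma}$ is the union of the \emph{complete} rings $K^{\Gamma,e}[[x_1,\dots,x_d]]$, which, because $\Gamma$ is infinite, are strictly larger than the image of $K^{\Gamma,e}\otimes_K K[[x_1,\dots,x_d]]$; power series whose coefficients involve infinitely many elements of $\Gamma$ are precisely where the difficulty lies, and reducedness of the subring $L\otimes_K K^{\Gamma,e}$ does not by itself yield reducedness of $L\otimes_A A^{\Gamma}$. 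Finally, saying that the linear disjointness step ``is precisely the content of Lemma 6.13'' is circular, since that lemma is the statement being proved; if you intend to quote Hochster--Huneke, quote it for the whole lemma (as the paper does) rather than as the last step of an argument that otherwise does not go through.
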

\begin{lemma}[{\it cf.} Lemma 2.9 and Lemma 4.3 in \cite{MelandFlorianFrobenius}]
\label{gamma construction preserves F-injective}
Let $R$ be a complete local ring. Let $W$ be an $R\{F\}$-module that is Artinian as an $R$-module such that the $F$-action is injective. Then for any sufficiently small choice of $\Gamma$ cofinite in $\Lambda$, the action of $F$ on $R^{\Gamma}\otimes_RW$ is also injective. Moreover, if $R^{\Gamma}$ is FH-finite (resp. has FH-finite length), then so is $R$.
\end{lemma}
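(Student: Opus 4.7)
For the first claim (injectivity of $F$ on $R^\Gamma\otimes_R W$), the Frobenius is the $R^\Gamma$-semilinear map $\tilde F(a\otimes w)=a^p\otimes F(w)$. Writing $R^\Gamma=\bigcup_e R_e$ with $R_e:=A^{\Gamma,e}\otimes_A R$ finite free over $R$, one has $R^\Gamma\otimes_R W=\bigcup_e R_e\otimes_R W$ and $\tilde F$ restricts compatibly. The plan, following Lemma 2.9 of \cite{MelandFlorianFrobenius}, is to use the Artinian hypothesis on $W$ to reduce the problem to a finite amount of data controlling whether $\ker\tilde F\neq 0$. Since $W$ is the directed union of its finite-length submodules $(0:_W \m^n)$, one isolates the finitely many annihilator relations (of the form $\Ann_R(w)$ versus $\Ann_R(F(w))$ for appropriate test elements $w$) that could witness a nontrivial kernel element after base change; one then excludes from the $p$-base $\Lambda$ the finitely many elements whose adjoined $p^e$-th roots could produce such kernel elements, and takes $\Gamma$ to be the complement. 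This forces $\ker\tilde F=0$ uniformly across the directed union.

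For the second claim (descent of FH-finiteness and FH-finite length), following Lemma 4.3 of \cite{MelandFlorianFrobenius}, I would invoke faithful flatness of $R\hookrightarrow R^\Gamma$ together with $\m R^\Gamma=\m_{R^\Gamma}$ to obtain
\[
H_{\m_{R^\Gamma}}^i(R^\Gamma)\cong R^\Gamma\otimes_R H_\m^i(R)
\]
compatibly with the Frobenius actions. Any $F$-compatible $R$-submodule $N\subseteq H_\m^i(R)$ extends to the $F$-compatible $R^\Gamma$-submodule $R^\Gamma\otimes_R N$, and by faithful flatness the assignment $N\mapsto R^\Gamma\otimes_R N$ is injective on the poset of $F$-compatible submodules and preserves strict inclusions (via the standard identity $N=(R^\Gamma\otimes_R N)\cap H_\m^i(R)$). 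Hence finitely many $F$-compatible $R^\Gamma$-submodules upstairs force finitely many $F$-compatible $R$-submodules downstairs, and the analogous argument on ascending chains descends finite length over $R^\Gamma\{F\}$ to finite length over $R\{F\}$.

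The main obstacle is the first assertion. The Frobenius on the base change blends both the $R^\Gamma$-Frobenius and the original Frobenius on $W$, so injectivity of $\tilde F$ does not follow from faithful flatness and injectivity of $F$ on $W$ alone; the careful shrinking of $\Gamma$ is needed precisely to exclude kernel elements that would otherwise be manufactured out of the newly adjoined $p^e$-th roots. Once the first assertion is in hand, the second is essentially formal from faithful flatness and flat base change for local cohomology.
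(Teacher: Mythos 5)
You should first be aware that the paper does not prove this lemma at all: it is imported verbatim, with the bracketed citation, from Lemma 2.9 and Lemma 4.3 of \cite{MelandFlorianFrobenius}, so the only question is whether your sketch stands on its own. Your second half does: since $\m R^\Gamma$ is the maximal ideal of $R^\Gamma$ and $R\to R^\Gamma$ is faithfully flat, the \v{C}ech complex gives $H^i_{\m R^\Gamma}(R^\Gamma)\cong R^\Gamma\otimes_R H^i_\m(R)$ with the Frobenius acting by $a\otimes z\mapsto a^p\otimes F(z)$; the assignment $N\mapsto R^\Gamma\otimes_R N$ sends $F$-compatible submodules to $F$-compatible submodules, and purity (a consequence of faithful flatness) gives $N=(R^\Gamma\otimes_R N)\cap H^i_\m(R)$, so strict inclusions are preserved and both FH-finiteness and FH-finite length descend. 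Note, though, that this half is completely independent of the first assertion, contrary to your closing sentence ``once the first assertion is in hand, the second is essentially formal.''

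The genuine gap is the first assertion, which is exactly the nontrivial content of Lemma 2.9 of \cite{MelandFlorianFrobenius}, and your sketch does not prove it. Two concrete problems. First, the reduction begins with the claim that $R_e=A^{\Gamma,e}\otimes_A R$ is finite free over $R$; this is false in the situation the $\Gamma$-construction is designed for, since when $\Lambda$ is infinite the field $K^{\Gamma,e}$ is an infinite extension of $K$, and $R_e$ is only faithfully flat and purely inseparable over $R$. Second, and more seriously, the key point --- that one may discard only \emph{finitely many} elements of the $p$-base, uniformly in $e$ and uniformly over all of $W$, so that the twisted map $a\otimes w\mapsto a^p\otimes F(w)$ acquires no kernel --- is simply asserted (``one isolates the finitely many annihilator relations \dots and excludes the finitely many elements whose adjoined roots could produce kernel elements''). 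No mechanism is given for why the set of offending $p$-base elements is finite, and annihilators of individual elements cannot control the kernel of a $p$-semilinear map after base change: a kernel element is a finite sum $\sum a_i\otimes w_i$ with new coefficients $a_i\in R^\Gamma$ and cannot be analyzed one $w$ at a time via $\Ann_R(w)$ versus $\Ann_R(F(w))$. In the standard treatment the required finiteness comes from a chain-condition argument (the kernel/nilpotent part of $R^\Gamma\otimes_R W$ shrinks and stabilizes as $\Gamma$ shrinks, and the stable value is forced into $W$, where $F$ is injective), using the Artinian hypothesis in an essential way; nothing of this sort appears in your outline. As written, the first assertion remains unproved; either supply such a stabilization argument or do what the paper does and cite Lemma 2.9 of \cite{MelandFlorianFrobenius} outright.
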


Now we start proving our main theorems. We first show that $F$-purity is preserved under nice base change. This is certainly well-known to experts. We refer to \cite{HashimotoF-purehomomorphisms} and \cite{SchwedeandZhangBertinitheoremsforFsingularities} for some (even harder) results on base change problems. Since in most of these references the results are only stated for $F$-finite rings, we provide a proof which works for all excellent rings. In fact, the proof follows from essentially the same argument as in the proof of Theorem 7.24 in \cite{HochsterHunekeFRegularityTestElementsBaseChange}. First we recall the notion of {\it Frobenius closure}: for any ideal $I\subseteq R$, $I^F=\{x\in R|x^{p^e}\in I^{[p^e]}$ for some $e\}$. If $R$ is $F$-pure, then every ideal is Frobenius closed. We will see that under mild conditions on the ring, the converse also holds.

We also need the notion of {\it approximately Gorenstein ring } introduced in \cite{HochsterCyclicPurity}: $(R,\m)$ is approximately Gorenstein if there exists a decreasing sequence of $\m$-primary ideals $\{I_t\}$ such that every $R/I_t$ is a Gorenstein ring  and the $\{I_t\}$ are cofinal with the powers of $\m$. That is, for every $N>0$, $I_t\subseteq \m^N$ for all $t\gg 1$. We will call such a sequence of ideals an {\it approximating sequence of ideals}. Note that for an $\m$-primary ideal $I$, $R/I$ is Gorenstein if and only if $I$ is an irreducible ideal, i.e. it is not the intersection of two strictly larger ideals. Every reduced excellent local ring is approximately Gorenstein (see \cite{HochsterCyclicPurity}).
\begin{lemma}
\label{characterization of F-pure in terms of Frobenius closure}
Let $(R,\m)$ be an excellent local ring. Then $R$ is $F$-pure if and only if there exists an approximating sequence of ideals $\{I_t\}$ such that $I_t^F=I_t$.
\end{lemma}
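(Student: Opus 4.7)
The plan is to reduce both directions to the following criterion of Hochster (\cite{HochsterCyclicPurity}) for purity in approximately Gorenstein local rings: if $(R,\m)$ is approximately Gorenstein with approximating sequence $\{I_t\}$, then a ring map $\phi\colon R \to S$ is pure if and only if the induced maps $R/I_t \to S/I_tS$ are injective for every $t$. Applying this with $\phi = F\colon R \to R$ the Frobenius and with the target $R$ regarded as an $R$-algebra via $F$, the extended ideal $I_t\cdot_F R$ equals $I_t^{[p]}$ and the induced map is $r + I_t \mapsto r^p + I_t^{[p]}$, so $F$-purity of $R$ becomes equivalent to injectivity of all the maps $R/I_t \to R/I_t^{[p]}$.

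For the forward direction, if $R$ is $F$-pure then $R$ is reduced, and being excellent it is therefore approximately Gorenstein by the theorem of Hochster cited just before the lemma, so there exists an approximating sequence $\{I_t\}$. Since $F$-pure rings have every ideal Frobenius-closed (as recalled in the lemma's discussion), one has $I_t^F = I_t$ for all $t$ as required.

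For the converse, suppose $\{I_t\}$ is an approximating sequence with $I_t^F = I_t$. For each $t$, pick $u_t \in R$ whose residue generates the one-dimensional socle of the Artinian Gorenstein ring $R/I_t$; such an element exists because $I_t$ is irreducible, and by construction $u_t \notin I_t$. Since $R/I_t$ has simple socle, the kernel of the Frobenius-induced map $R/I_t \to R/I_t^{[p]}$ is zero if and only if it fails to contain the socle generator $u_t + I_t$, equivalently $u_t^p \notin I_t^{[p]}$. Were $u_t^p \in I_t^{[p]}$, the very definition of Frobenius closure (taking $e=1$) would force $u_t \in I_t^F = I_t$, contradicting the choice of $u_t$. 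Thus each such map is injective, and Hochster's criterion then yields that $F$ is pure, i.e., $R$ is $F$-pure.

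The main obstacle is invoking the approximately-Gorenstein cyclic-purity machinery of Hochster; once in place, the argument reduces to a short socle computation in the Gorenstein quotient. A minor subtlety worth flagging is that the hypothesis $I_t^F = I_t$ along the approximating sequence is \emph{a priori} weaker than requiring $J^F = J$ for every ideal $J$ of $R$, and it is precisely the approximately-Gorenstein reduction that lets us get by with the former.
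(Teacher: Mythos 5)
Your proposal is correct and takes essentially the same approach as the paper: the forward direction is identical, and for the converse the paper simply proves the needed instance of Hochster's cyclic-purity criterion inline, by writing $E_R=\varinjlim_t R/I_t$ and checking injectivity of $R/I_t\to R^{(1)}/I_tR^{(1)}$ from $I_t^F=I_t$, which is precisely the black box you invoke. (Your socle detour is harmless but unnecessary: $r^p\in I_t^{[p]}$ already forces $r\in I_t^F=I_t$ for every $r$, not just the socle generator.)
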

\begin{proof}
If $(R,\m)$ is excellent and $F$-pure then $R$ is reduced, hence approximately Gorenstein. So there exists an approximating sequence of ideals $\{I_t\}$. $I_t^F=I_t$ follows because $R$ is $F$-pure. 
For the other direction, we use $R^{(1)}$ to denote the target ring under the Frobenius endomorphism $R\rightarrow R^{(1)}$ and we want to show $R\rightarrow R^{(1)}$ is pure. It suffices to show that $E_R\hookrightarrow R^{(1)}\otimes_RE_R$ is injective where $E_R$ denotes the injective hull of the residue field of $R$. But it is easy to check that $E_R=\D\varinjlim_t\frac{R}{I_t}$. Hence $E_R\hookrightarrow R^{(1)}\otimes_RE_R$ is injective if $\D\frac{R}{I_t}\hookrightarrow \frac{R^{(1)}}{I_tR^{(1)}}$ is injective for all $t$. But this is true because $I_t^F=I_t$.
\end{proof}
\begin{proposition}
\label{F-pure preserved under flat map with regular fibres}
Let $(R,\m)\rightarrow (S,\n)$ be a faithfully flat extension of excellent local rings such that the closed fibre $S/\m S$ is Gorenstein and $F$-pure. If $R$ is $F$-pure, then $S$ is $F$-pure.
\end{proposition}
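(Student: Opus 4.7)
The plan is to invoke Lemma \ref{characterization of F-pure in terms of Frobenius closure} for $S$: we construct a descending family $\{K_{t,s}\}$ of $\n$-primary ideals of $S$, cofinal with powers of $\n$, with each $S/K_{t,s}$ Gorenstein and each $K_{t,s}^F=K_{t,s}$. Start by applying the lemma to $R$ to get an approximating sequence $\{I_t\}$ with $I_t^F=I_t$ and $R/I_t$ Gorenstein; since $S/\m S$ is an excellent Gorenstein $F$-pure ring, for any system of parameters $\bar y_1,\ldots,\bar y_d$ there the parameter ideals $(\bar y_1^s,\ldots,\bar y_d^s)$ form an approximating sequence with Gorenstein Artinian quotients, each Frobenius closed. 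Lift $\bar y_i$ to $y_i\in S$ and set
\[ K_{t,s}:=I_tS+(y_1^s,\ldots,y_d^s). \]

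Three properties are routine: $K_{t,s}$ is $\n$-primary, since its image in $S/\m S$ is the $\n/\m S$-primary ideal $(\bar y_1^s,\ldots,\bar y_d^s)$ and $I_tS$ absorbs a power of $\m S$; the family is cofinal with powers of $\n$, since $I_t\subseteq\m^{N_t}$ with $N_t\to\infty$ gives $I_tS\subseteq\n^{N_t}$, while $y_i^s\in\n^s$; and $S/K_{t,s}$ is Gorenstein, because the flat local extension $R/I_t\to S/I_tS$ of Gorenstein rings with Gorenstein closed fibre $S/\m S$ makes $S/I_tS$ Gorenstein, and $\bar y_1,\ldots,\bar y_d$ is then a system of parameters in this Cohen-Macaulay local ring, hence a regular sequence, so killing it yields the Artinian Gorenstein ring $S/K_{t,s}$.

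The crux is to verify $K_{t,s}^F=K_{t,s}$. If $z^{p^e}\in K_{t,s}^{[p^e]}=I_t^{[p^e]}S+(y_1^{sp^e},\ldots,y_d^{sp^e})$, then reducing modulo $\m S$ and invoking the Frobenius closedness of $(\bar y_1^s,\ldots,\bar y_d^s)$ in $S/\m S$ yields a decomposition $z=v+u$ with $v\in(y_1^s,\ldots,y_d^s)$ and $u\in\m S$; the identity $u^{p^e}=z^{p^e}-v^{p^e}$ in characteristic $p$ reduces to showing $u\in K_{t,s}$. This is equivalent to Frobenius injectivity of the Artinian Gorenstein ring $S/K_{t,s}$, which itself is flat over the Artinian Gorenstein $F$-pure ring $R/I_t$ with Artinian Gorenstein $F$-pure closed fibre $(S/\m S)/(\bar y_1^s,\ldots,\bar y_d^s)$ inherited from the $F$-purity of $S/\m S$.

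I expect this final Artinian-over-Artinian instance to be the main obstacle. One natural strategy is to factor the Frobenius of the extension $R/I_t\to S/K_{t,s}$ through the flat base change of Frobenius (which is pure, being the base change of the pure map $R/I_t\to F_*(R/I_t)$ along the flat $R/I_t$-algebra $S/K_{t,s}$) followed by the relative Frobenius, whose purity is to be extracted from the $F$-purity of the closed fibre together with the Gorenstein (one-dimensional-socle) structure of $S/K_{t,s}$. This parallels the interplay between flatness, test elements, and approximately Gorenstein reductions in the proof of Theorem 7.24 of \cite{HochsterHunekeFRegularityTestElementsBaseChange}, and it is the one place where all of the hypotheses—$R$ $F$-pure, $S/\m S$ $F$-pure, and $S/\m S$ Gorenstein—are simultaneously used.
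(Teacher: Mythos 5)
Your reduction is the same as the paper's up to the decisive point: you build the approximating sequence $K_{t,s}=I_tS+(y_1^s,\dots,y_d^s)$, check $\n$-primariness, cofinality and Gorensteinness of $S/K_{t,s}$, and correctly observe that everything hinges on $K_{t,s}^F=K_{t,s}$. But that crux is not proved. Your fiber argument only shows that a potential counterexample $z$ can be replaced by $u=z-v\in\m S$ with $u^{p^e}\in K_{t,s}^{[p^e]}$, which is the original problem again (and the phrase ``Frobenius injectivity of the Artinian Gorenstein ring $S/K_{t,s}$'' is a misstatement: what is needed is injectivity of $S/K_{t,s}\to F^e_*(S/K_{t,s}^{[p^e]})$ for all $e$, not injectivity of the Frobenius endomorphism of $S/K_{t,s}$, which fails for any non-reduced Artinian ring). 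The strategy you then sketch contains a false step: $R/I_t\to F_*(R/I_t)$ is \emph{not} pure --- it is the Frobenius endomorphism of a non-reduced Artinian local ring, hence not even injective. What $F$-purity of $R$ actually gives after base change is purity of $S/K_{t,s}\to F_*R\otimes_R S/K_{t,s}$; the remaining ``relative Frobenius'' map is precisely where the $F$-purity and Gorensteinness of the closed fibre must enter, and you explicitly leave its purity ``to be extracted.'' So the essential content of the proposition is missing.

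The paper closes this gap with a socle argument rather than a factorization of Frobenius. Since $S/K_{t,s}$ is Artinian Gorenstein, its socle is one-dimensional and is generated by the image of $vw$, where $v$ is a socle representative of $R/I_t$ and $w$ a socle representative of $(S/\m S)/(\bar{y}_1^s,\dots,\bar{y}_d^s)$; hence Frobenius closedness of $K_{t,s}$ is equivalent to $vw\notin K_{t,s}^F$. If $v^qw^q\in I_t^{[q]}S+(y_1^{sq},\dots,y_d^{sq})$, pass to $S/(y_1^{sq},\dots,y_d^{sq})$, which is faithfully flat over $R$ (the $y_i$ lift a system of parameters of the Cohen--Macaulay fibre), so colon ideals base change: one gets $w^q\in(I_t^{[q]}:_Rv^q)S+(y_1^{sq},\dots,y_d^{sq})$. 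Frobenius closedness of $I_t$ in the $F$-pure ring $R$ forces $(I_t^{[q]}:v^q)\subseteq\m$ (otherwise $v\in I_t^F=I_t$), and reducing mod $\m S$ yields $\bar{w}\in(\bar{y}_1^{s},\dots,\bar{y}_d^{s})^F$ in $S/\m S$, contradicting the $F$-purity of the fibre. If you want to salvage your write-up, this colon-ideal/flatness computation (or a genuinely proved purity statement for the relative Frobenius, as in Hashimoto's work on $F$-pure homomorphisms) is the piece you must supply.
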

\begin{proof}
Let $\{I_k\}$ be an approximating sequence of ideals in $R$. Let $x_1,\dots,x_n$ be elements in $S$ such that their image form a system of parameters in $S/\m S$. Since $S/\m S$ is Gorenstein, $(x_1^t,\dots,x_n^t)$ is an approximating sequence of ideals in $S/\m S$. So that $I_k+(x_1^t,\dots,x_n^t)$ is an approximating sequence of ideals in $S$ (see the proof of Theorem 7.24 in \cite{HochsterHunekeFRegularityTestElementsBaseChange}).

To show that $S$ is $F$-pure, it suffices to show every $I_k+(x_1^t,\dots,x_n^t)$ is Frobenius closed by Lemma \ref{characterization of F-pure in terms of Frobenius closure}. Therefore we reduce to showing that if $I$ is an irreducible ideal primary to $\m$ in $R$ and $x_1,\dots,x_n$ are elements in $S$ such that their image form a system of parameters in $S/\m S$, then $(IS+(x_1,\dots,x_n))^F=IS+(x_1,\dots,x_n)$ in $S$.

Let $v$ and $w$ be socle representatives of $R/I$ and $(S/\m S)/(x_1,\dots,x_n)(S/\m S)$ respectively. It suffices to show $vw\notin (IS+(x_1,\dots,x_n))^F$. Suppose we have \[v^qw^q\in I^{[q]}S+(x_1^q,\dots,x_n^q).\] Taking their image in $S/(x_1^q,\dots,x_n^q)$, we have \[\overline{v}^q\overline{w}^q\in \overline{I^{[q]}S},\] therefore \[\overline{w}^q\in (\overline{I^{[q]}S}:\overline{v}^q)=(I^{[q]}:v^q)(S/(x_1^q,\dots,x_n^q)S)\] where the second equality is because $S/(x_1^q,\dots,x_n^q)S$ is faithfully flat over $R$. So we have \[w^q\in (I^{[q]}:v^q)S+(x_1^q,\dots,x_n^q).\] Since $R$ is $F$-pure, $(I^{[q]}:v^q)\in \m$. So after taking image in $S/\m S$, we get that \[w^q\in(x_1^q,\dots,x_n^q)(S/\m S).\] Hence $w\in (x_1^q,\dots,x_n^q)^F$ in $S/\m S$, this contradict to the condition that $S/\m S$ be $F$-pure.
\end{proof}

Next we want to show the openness of {\it $F$-pure locus} when $R$ is $F$-finite, where the $F$-pure locus is defined as the set $\{P\in \Spec R| R_P $ is $F$-pure $\}$.
\begin{lemma}
\label{openness of F-pure locus}
Let $(R,\m)$ be an $F$-finite local ring. Then the $F$-pure locus is open in $\Spec R$ in the Zariski topology.
\end{lemma}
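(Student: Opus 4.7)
The plan is to realize the $F$-pure locus as the complement of the vanishing locus of a single ideal of $R$, constructed via the evaluation map on the Frobenius $\Hom$-module. Since $R$ is $F$-finite, every localization $R_P$ is $F$-finite as well, and for such rings $F$-pure is equivalent to $F$-split (as already noted in Section~2 of the paper). I would let $F_*R$ denote $R$ regarded as an $R$-module via the Frobenius; this is a finitely generated, hence finitely presented, $R$-module, and the canonical map $R \to F_*R$ splits $R$-linearly precisely when the evaluation map
\[
\varepsilon\colon \Hom_R(F_*R, R) \longrightarrow R, \qquad \phi \mapsto \phi(1),
\]
has $1 \in R$ in its image. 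Since $\varepsilon$ is $R$-linear, its image is an ideal $I \subseteq R$, and $R$ is $F$-pure if and only if $I = R$.

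Next I would verify that this characterization localizes cleanly. Because $F_*R$ is finitely presented and satisfies $(F_*R)_P \cong F_*(R_P)$ as $R_P$-modules, formation of $\Hom_R(F_*R, R)$ commutes with localization, yielding a natural identification
\[
\Hom_R(F_*R, R)_P \;\cong\; \Hom_{R_P}\bigl(F_*R_P,\, R_P\bigr),
\]
under which the evaluation-at-$1$ map for $R_P$ coincides with the localization of $\varepsilon$. Its image is therefore the extended ideal $I R_P$, so $R_P$ is $F$-pure if and only if $I R_P = R_P$, equivalently $I \not\subseteq P$.

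Putting these two observations together, the $F$-pure locus of $R$ equals $\{P \in \Spec R : I \not\subseteq P\} = \Spec R \setminus V(I)$, which is a basic open subset in the Zariski topology. There is no serious obstacle in this argument: it is really just bookkeeping around the ``splitting ideal'' of the Frobenius. The only points meriting care are the finite presentation of $F_*R$ (needed so that $\Hom$ commutes with localization) and checking that the evaluation maps match under this identification; both are routine, so I expect the proof to be quite short.
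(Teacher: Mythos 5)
Your proof is correct and follows essentially the same route as the paper: both arguments hinge on the facts that $F$-purity equals $F$-splitness for $F$-finite (local) rings and that, because $F_*R$ is a finitely generated module over the Noetherian ring $R$, formation of $\Hom_R(F_*R,R)$ and of the evaluation-at-$1$ map commutes with localization, so the splitting condition can be tested locally. The only difference is packaging --- the paper spreads surjectivity of $\Hom_R(F_*R,R)\to R$ from $P$ to a principal open neighborhood $D(x)$, while you record its image as a ``splitting ideal'' $I$ and identify the $F$-pure locus with $\Spec R\setminus V(I)$ (note this set is open, but it is not in general a \emph{basic} open set as you call it).
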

\begin{proof}
We will use $R^{(1)}$ to denote the target ring under the Frobenius endomorphism $R\rightarrow R^{(1)}$. Since $R$ is $F$-finite, $R^{(1)}$ is finitely generated as an $R$-module.

It suffice to show that if $R_P$ is $F$-pure, then there exists $x\notin P$ such that $R_x$ is $F$-pure. Since $R$ is $F$-finite, so is $R_P$. Therefore $R_P$ is $F$-split. That is, the map $R_P\hookrightarrow R^{(1)}_P$ splits as $R$-modules. This implies that the map \[\Hom_{R_P}(R^{(1)}_P, R_P)\rightarrow \Hom_{R_P}(R_P, R_P)\] is surjective. Since $R^{(1)}$ is finitely generated as an $R$-module, $\Hom_{R_P}(R^{(1)}_P, R_P)\cong\Hom_R(R^{(1)}, R)_P$. So we know that \[\Hom_{R}(R^{(1)}, R)_P\rightarrow \Hom_{R}(R, R)_P\] is surjective. Hence we may pick $x\notin P$ such that \[\Hom_{R}(R^{(1)}, R)_x\rightarrow \Hom_{R}(R, R)_x\] is surjective. This proves that $R_x$ is $F$-split, hence $F$-pure.
\end{proof}

Now we show that $F$-purity on the punctured spectrum is preserved under the $\Gamma$-construction when we pick $\Gamma$ sufficiently small and cofinite in $\Lambda$.
\begin{proposition}
\label{use gamma construction to get F-pure on punctured spec}
Let $(R,\m)$ be a complete local ring such that $R_P$ is $F$-pure on the punctured spectrum $\Spec{R}-\{\m\}$. Then for any sufficiently small choice of $\Gamma$ cofinite in $\Lambda$, $R^\Gamma$ is $F$-pure on the punctured spectrum $\Spec{R^\Gamma}-\{\m R^\Gamma\}$.
\end{proposition}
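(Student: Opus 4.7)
The plan is to exploit three ingredients: $R^\Gamma$ is $F$-finite, so its non-$F$-pure locus is closed by Lemma~\ref{openness of F-pure locus}; the map $\Spec R^\Gamma \to \Spec R$ is a homeomorphism, since $R \to R^\Gamma$ is integral, faithfully flat, and purely inseparable; and Proposition~\ref{F-pure preserved under flat map with regular fibres} propagates $F$-purity along faithfully flat local maps whose closed fibre is a field. I would denote by $\widetilde{Y}_\Gamma \subseteq \Spec R$ the image of the non-$F$-pure locus of $R^\Gamma$ under this homeomorphism, so that $\widetilde{Y}_\Gamma$ is closed in $\Spec R$, and the goal becomes showing $\widetilde{Y}_\Gamma \subseteq \{\m\}$ for all sufficiently small cofinite $\Gamma$.

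First I would establish the monotonicity $\widetilde{Y}_{\Gamma'} \subseteq \widetilde{Y}_\Gamma$ whenever $\Gamma' \subseteq \Gamma$ are cofinite in $\Lambda$. The inclusion $R^{\Gamma'} \hookrightarrow R^\Gamma$ is faithfully flat by miracle flatness (both $A^{\Gamma'}$ and $A^\Gamma$ are regular local of dimension $d$ with the same parameters $x_1,\dots,x_d$, giving flatness of $A^{\Gamma'}\to A^\Gamma$, which is preserved by the base change to $R$), hence pure; and a pure subring of an $F$-pure ring is $F$-pure. Thus if $(R^\Gamma)_Q$ is $F$-pure, so is $(R^{\Gamma'})_{Q \cap R^{\Gamma'}}$, which yields the claimed inclusion after translating via the homeomorphism. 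By Noetherianity of $\Spec R$, this descending directed family of closed sets admits a minimum: there exists a cofinite $\Gamma^* \subseteq \Lambda$ with $\widetilde{Y}_\Gamma = \widetilde{Y}_{\Gamma^*}$ for every cofinite $\Gamma \subseteq \Gamma^*$.

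I would then argue by contradiction: suppose $\widetilde{Y}_{\Gamma^*}$ contains some $P_0 \neq \m$. Apply Lemma~\ref{gamma construction preserves prime} to $P_0$ to find a cofinite $\Gamma_1 \subseteq \Gamma^*$ for which $Q_0 := P_0 R^{\Gamma_1}$ is prime. The induced map $R_{P_0} \to (R^{\Gamma_1})_{Q_0}$ is then a faithfully flat local homomorphism between excellent local rings ($R_{P_0}$ is a localization of a complete local ring, and $(R^{\Gamma_1})_{Q_0}$ a localization of the $F$-finite ring $R^{\Gamma_1}$), and its closed fibre equals $(R^{\Gamma_1}/P_0 R^{\Gamma_1})_{Q_0} = \kappa(Q_0)$, a field, hence Gorenstein and $F$-pure. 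Because $R_{P_0}$ is $F$-pure by hypothesis, Proposition~\ref{F-pure preserved under flat map with regular fibres} implies $(R^{\Gamma_1})_{Q_0}$ is $F$-pure. Therefore $P_0 \notin \widetilde{Y}_{\Gamma_1} = \widetilde{Y}_{\Gamma^*}$, contradicting the choice of $P_0$. So $\widetilde{Y}_{\Gamma^*} \subseteq \{\m\}$, and by monotonicity the same conclusion holds for every cofinite $\Gamma \subseteq \Gamma^*$, as desired.

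The main obstacle, as I see it, is uniformity in $\Gamma$. Lemma~\ref{gamma construction preserves prime} only provides, for each individual non-maximal prime $P$, a cofinite $\Gamma_0(P)$ making $PR^{\Gamma_0(P)}$ prime, but there are infinitely many non-maximal primes and a naive intersection of all the $\Gamma_0(P)$ need not remain cofinite. The Noetherian stabilization step above is precisely what converts per-prime control into a single $\Gamma^*$ that works uniformly; combined with openness of the $F$-pure locus (which makes the bad set closed in the first place), it reduces the problem to killing one prime at a time.
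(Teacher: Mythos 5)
Your proof is correct and follows essentially the same strategy as the paper's: openness of the $F$-pure locus for the $F$-finite rings $R^\Gamma$, a Noetherian extremality argument (you minimize the non-$F$-pure locus where the paper maximizes the $F$-pure locus, which is the same thing), and then Lemma \ref{gamma construction preserves prime} together with Proposition \ref{F-pure preserved under flat map with regular fibres} to rule out a single bad non-maximal prime, using descent of $F$-purity along the faithfully flat map $R^{\Gamma'}\to R^\Gamma$ for $\Gamma'\subseteq\Gamma$. Your extra care with the monotonicity in $\Gamma$ (via miracle flatness for $A^{\Gamma'}\to A^\Gamma$) and with the "for all sufficiently small $\Gamma$" quantifier only makes explicit what the paper leaves implicit.
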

\begin{proof}
Because $R^\Gamma$ is purely inseparable over $R$, for all $P\in \Spec R$ there is a unique prime ideal $P^{\Gamma}\in \Spec {R^{\Gamma}}$ lying over $P$. In particular, $\Spec{R^\Gamma}\cong\Spec R$. Since $R^{\Gamma}$ is $F$-finite, we know the $F$-pure locus of each $R^{\Gamma}$, call it $X_\Gamma$, is open in $\Spec{R^\Gamma}=\Spec R$ by Lemma \ref{openness of F-pure locus}. Since open sets in $\Spec R$ satisfy ACC, we know that there exists $\Gamma$ such that $X_\Gamma$ is maximal. We will show that $X_\Gamma\supseteq \Spec R-\{\m\}$. This will prove $R^\Gamma$ is $F$-pure on the punctured spectrum $\Spec{R^\Gamma}-\{\m R^\Gamma\}$.

Suppose there exists $Q\neq \m$ such that $Q\notin X_\Gamma$. We may pick $\Gamma'\subseteq \Gamma$ sufficiently small and cofinite in $\Lambda$ such that $QR^{\Gamma'}$ is prime (that is, $QR^{\Gamma'}=Q^{\Gamma'}$) by Lemma \ref{gamma construction preserves prime}. So $R_Q\rightarrow R^{\Gamma'}_{Q^{\Gamma'}}$ is faithfully flat whose closed fibre is a field. By Proposition \ref{F-pure preserved under flat map with regular fibres}, $R^{\Gamma'}_{Q^{\Gamma'}}$ is $F$-pure. Since $\Gamma'\subseteq \Gamma$, $R^{\Gamma'}\rightarrow R^{\Gamma}$ is faithfully flat so $R^{\Gamma'}_{P^{\Gamma'}}\rightarrow R^\Gamma_{P^\Gamma}$ is faithfully flat for each $P\in \Spec R$. Now for $P\in X_\Gamma$, $R^\Gamma_{P^\Gamma}$ is $F$-pure, hence so is $R^{\Gamma'}_{P^{\Gamma'}}$. So $X_{\Gamma'}\supseteq X_{\Gamma}\cup \{Q\}$, which is a contradiction since we assume that $X_\Gamma$ is maximal.
\end{proof}

\begin{theorem}
Let $(R,\m)$ be an excellent local ring such that $R_P$ is $F$-pure for every $P\in \Spec R - \{\m\}$. Then $R$ has FH-finite length.
\end{theorem}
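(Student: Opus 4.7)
The plan is to reduce to the complete, $F$-finite setting of Corollary \ref{F-pure on the punctured spec implies FH-finite length for complete F-finite} in two steps: first I pass to the completion $\widehat{R}$, and then I apply the $\Gamma$-construction to obtain an $F$-finite ring.

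For the first step, by Lemma \ref{reduce to F-split}, $R$ has FH-finite length if and only if $\widehat R$ does, so it suffices to verify that the hypothesis of $F$-purity on the punctured spectrum is transferred to $\widehat R$. Let $Q\in\Spec\widehat R$ with $Q\neq\m\widehat R$ and set $P=Q\cap R$. Since $\m\widehat R$ is the unique maximal ideal of $\widehat R$, we have $P\neq\m$, so $R_P$ is $F$-pure by hypothesis. Because $R$ is excellent, the map $R\to\widehat R$ has geometrically regular fibres; localizing, $R_P\to\widehat R_Q$ is a faithfully flat map of excellent local rings whose closed fibre $\widehat R_Q/P\widehat R_Q$ is a localization of the (geometrically) regular ring $\widehat R\otimes_R \kappa(P)$, hence is regular, in particular Gorenstein and $F$-pure. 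Proposition \ref{F-pure preserved under flat map with regular fibres} then gives that $\widehat R_Q$ is $F$-pure. Thus I may assume $R$ is complete.

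For the second step, Proposition \ref{use gamma construction to get F-pure on punctured spec} provides a sufficiently small cofinite $\Gamma\subseteq\Lambda$ such that $R^\Gamma$ is $F$-finite (by construction) and $F$-pure on its punctured spectrum. Every $F$-finite Noetherian ring is excellent (Kunz) and $F$-finiteness is preserved under completion, so $\widehat{R^\Gamma}$ is complete, $F$-finite, and excellent; repeating verbatim the argument of the previous paragraph, with $R^\Gamma$ in place of $R$, shows that $\widehat{R^\Gamma}$ is again $F$-pure on its punctured spectrum. Now Corollary \ref{F-pure on the punctured spec implies FH-finite length for complete F-finite} applies and yields that $\widehat{R^\Gamma}$ has FH-finite length. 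Lemma \ref{reduce to F-split} then gives that $R^\Gamma$ itself has FH-finite length, and a final application of Lemma \ref{gamma construction preserves F-injective} descends FH-finite length from $R^\Gamma$ back to $R$, completing the proof.

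The main point requiring care is the verification that $F$-purity on the punctured spectrum survives each of the three change-of-rings maneuvers $R\to\widehat R$, $R\to R^\Gamma$, and $R^\Gamma\to\widehat{R^\Gamma}$. The middle case is precisely Proposition \ref{use gamma construction to get F-pure on punctured spec}; the first and third are routine consequences of Proposition \ref{F-pure preserved under flat map with regular fibres} provided one knows that the rings in question are excellent. The need for a second completion after applying the $\Gamma$-construction (since $R^\Gamma$ is not a priori complete) is what forces us to invoke Kunz's theorem that $F$-finite rings are excellent, but apart from that the whole argument is a straightforward chain of reductions to previously established results.
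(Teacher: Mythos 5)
Your proposal is correct and follows essentially the same route as the paper: both pass along the chain $R\to\widehat R\to\widehat R^{\Gamma}\to\widehat{\widehat R^{\Gamma}}$, using excellence together with Proposition \ref{F-pure preserved under flat map with regular fibres} to transfer $F$-purity on the punctured spectrum across the two completions, Proposition \ref{use gamma construction to get F-pure on punctured spec} for the $\Gamma$-step, Kunz's theorem for excellence of the $F$-finite ring, and then Lemmas \ref{reduce to F-split} and \ref{gamma construction preserves F-injective} to descend FH-finite length back to $R$. Your citation of Corollary \ref{F-pure on the punctured spec implies FH-finite length for complete F-finite} is just the paper's packaging of Theorems \ref{F-pure implies FH-finite} and \ref{FH-finite length is equiv to stably FH-finite on punctured spec}, so there is no substantive difference.
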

\begin{proof}
We look at the chain of faithfully flat ring extensions: \[R\rightarrow \widehat{R}\rightarrow \widehat{R}^\Gamma\rightarrow \widehat{\widehat{R}^\Gamma}.\] Since $R$ is excellent, for every $Q_0\in\Spec{\widehat{R}}-\{\m\widehat{R}\}$ lying over $P$ in $R$, $R_P\rightarrow \widehat{R}_{Q_0}$ has geometrically regular fibres, so $\widehat{R}_{Q_0}$ is $F$-pure by Proposition \ref{F-pure preserved under flat map with regular fibres}. Hence $\widehat{R}$ is $F$-pure on the punctured spectrum. So by Proposition \ref{use gamma construction to get F-pure on punctured spec}, we can pick $\Gamma$ sufficiently small and cofinite in $\Lambda$ such that $\widehat{R}^\Gamma$ is $F$-pure on the punctured spectrum.

For every $Q\in\Spec{\widehat{\widehat{R}^\Gamma}}-\{\m\widehat{\widehat{R}^\Gamma}\}$, let $Q_1\neq \m{\widehat{R}^\Gamma}$  be the contraction of $Q$ to $\widehat{R}^\Gamma$. Since $\widehat{R}^\Gamma_{Q_1}$ is $F$-finite, it is excellent by \cite{KunzOnNoetherianRingsOfCharP}. So the closed fibre of $\widehat{R}^\Gamma_{Q_1}\rightarrow \widehat{\widehat{R}^\Gamma}_Q$ is geometrically regular. So $\widehat{\widehat{R}^\Gamma}_Q$ is $F$-pure by Proposition \ref{F-pure preserved under flat map with regular fibres}.

It follows that, for sufficiently small choice of $\Gamma$ cofinite in $\Lambda$, $\widehat{\widehat{R}^\Gamma}_Q$ is $F$-pure for every $Q\in\Spec{\widehat{\widehat{R}^\Gamma}}-\{\m\widehat{\widehat{R}^\Gamma}\}$. Now by Theorem \ref{F-pure implies FH-finite} applied to $\widehat{\widehat{R}^\Gamma}_Q$ and Theorem \ref{FH-finite length is equiv to stably FH-finite on punctured spec} applied to $\widehat{\widehat{R}^\Gamma}$, we know $\widehat{\widehat{R}^\Gamma}$ has FH-finite length. Hence so does $R$ by Lemma \ref{reduce to F-split} and Lemma \ref{gamma construction preserves F-injective}.
\end{proof}

\begin{lemma}
\label{gamma construction preserves simple}
Let $R$ be a complete local ring. Let $W$ be a simple $R\{F\}$-module that is Artinian over $R$ with nontrivial $F$-action. Then for any sufficiently small choice of $\Gamma$ cofinite in $\Lambda$, $W\otimes_RR^\Gamma$ is a simple $R^\Gamma\{F\}$-module with nontrivial $F$-action.
\end{lemma}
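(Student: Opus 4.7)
First I would handle the easy parts. Since $W$ is simple with nontrivial $F$-action, the kernel of $F$ on $W$ is a proper $R\{F\}$-submodule and hence zero, so $F$ acts injectively on $W$. By Lemma~\ref{gamma construction preserves F-injective}, for any sufficiently small cofinite $\Gamma\subseteq\Lambda$ the action of $F$ on $W\otimes_R R^\Gamma$ is also injective; fix such a $\Gamma$ for the rest of the argument. Nontriviality of the $F$-action on $W\otimes R^\Gamma$ is then automatic: picking $w\in W$ with $F(w)\neq 0$, the element $F(w\otimes 1)=F(w)\otimes 1$ is nonzero in $W\otimes R^\Gamma$ by faithful flatness of $R^\Gamma$ over $R$ (which guarantees that $W\hookrightarrow W\otimes R^\Gamma$ is injective).

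The remaining and main task is to prove simplicity of $W\otimes R^\Gamma$ as an $R^\Gamma\{F\}$-module. The crucial observation is a kind of ``Frobenius concentration'': since every $r\in R^\Gamma$ lies in $K^{\Gamma,e}[[x_1,\ldots,x_d]]\otimes_A R$ for some $e$, and since elements of $K^{\Gamma,e}$ have their $p^e$-th powers in $K$, we have $r^{p^e}\in R$. Consequently, for any finite sum $x=\sum_i w_i\otimes r_i\in W\otimes_R R^\Gamma$, choosing $e$ large enough that $r_i^{p^e}\in R$ for every $i$ yields
\[
F^e(x)=\sum_i F^e(w_i)\otimes r_i^{p^e}=\Bigl(\sum_i r_i^{p^e}F^e(w_i)\Bigr)\otimes 1,
\]
which lies in the canonical image of $W$ inside $W\otimes R^\Gamma$.

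To conclude, given a nonzero $R^\Gamma\{F\}$-submodule $N'\subseteq W\otimes R^\Gamma$ and a nonzero $x\in N'$, I would apply $F^e$ for an appropriate $e$ so that $F^e(x)$ lies in the copy of $W$; by the $F$-injectivity established above this element is still nonzero, and by $F$-stability of $N'$ it lies in $N'$. Hence $N'\cap W$ is a nonzero $R\{F\}$-submodule of $W$, which by simplicity of $W$ must equal $W$ itself. Therefore $W\subseteq N'$, and taking the $R^\Gamma$-span gives $N'=R^\Gamma\cdot W=W\otimes R^\Gamma$. The main obstacle is precisely this simplicity step, and it is handled by the concentration trick above, which crucially exploits that $R^\Gamma$ is purely inseparable over $R$.
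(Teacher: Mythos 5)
Your proof is correct and follows essentially the same route as the paper: arrange $F$-injectivity on $W\otimes_R R^\Gamma$ via Lemma~\ref{gamma construction preserves F-injective}, use pure inseparability of $R^\Gamma$ over $R$ to push a nonzero element of any $F$-compatible submodule into the image of $W$ by a power of $F$, intersect with $W$, and conclude by simplicity plus the $R^\Gamma$-span. The only (harmless) difference is that the paper first invokes Theorem~\ref{Lyubeznik's result} to extract a simple submodule $L$, whereas you argue directly with an arbitrary nonzero $R^\Gamma\{F\}$-submodule, and you spell out the nontriviality of the induced action, which the paper leaves implicit.
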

\begin{proof}
By Lemma \ref{gamma construction preserves F-injective}, we may pick $\Gamma$ sufficiently small and cofinite in $\Lambda$ such that $F$ acts injectively on $W\otimes_RR^{\Gamma}$. I claim such a $W\otimes_RR^{\Gamma}$ must be a simple $R^\Gamma\{F\}$-module. If not, then by Theorem \ref{Lyubeznik's result}, we have $0\subsetneqq L \subsetneqq W\otimes_RR^{\Gamma}$ where $L$ is simple with nontrivial Frobenius action. Now we pick $0\neq x\in L$. Because $R^{\Gamma}$ is purely inseparable over $R$, there exists $e$ such that $0\neq F^{e}(x)\in W$. Hence $L\cap W\neq 0$. But it is straightforward to check that $L\cap W$ is an $R\{F\}$-submodule of $W$. So $L\cap W=W$ since $W$ is simple. Hence $L\supseteq W\otimes_RR^\Gamma$ which is a contradiction.
\end{proof}

\begin{proposition}
\label{FH-finite length preserved under gamma construction}
Let $(R,\m)$ be a complete local ring. Then
\begin{enumerate}
\item If $R$ has FH-finite length , then so does $R^\Gamma$ for $\Gamma$ sufficiently small and cofinite in $\Lambda$.
\item If $R$ is stably FH-finite, then so is $R^\Gamma$ for $\Gamma$ sufficiently small and cofinite in $\Lambda$.
\end{enumerate}
\end{proposition}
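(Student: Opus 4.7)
The plan is to transfer Lyubeznik filtrations along the faithfully flat extension $R\to R^{\Gamma}$. By Theorem \ref{Lyubeznik's result}, for each $0\leq i\leq d$ we fix a filtration
\[
0=L_0^{(i)}\subseteq N_0^{(i)}\subseteq L_1^{(i)}\subseteq N_1^{(i)}\subseteq\cdots\subseteq L_{s_i}^{(i)}\subseteq N_{s_i}^{(i)}=H_{\m}^{i}(R)
\]
whose successive quotients $N_j^{(i)}/L_j^{(i)}$ are $F$-nilpotent and whose successive quotients $L_j^{(i)}/N_{j-1}^{(i)}$ are simple $R\{F\}$-modules with nontrivial $F$-action. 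There are only finitely many such simple factors in total as $i$ ranges from $0$ to $d$, and each is Artinian as an $R$-module since $H_{\m}^{i}(R)$ is.

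By flat base change (and the fact that $\m R^{\Gamma}$ is the maximal ideal of $R^{\Gamma}$), $H_{\m}^{i}(R)\otimes_RR^{\Gamma}\cong H_{\m R^{\Gamma}}^{i}(R^{\Gamma})$, and under this identification the natural $R^{\Gamma}\{F\}$-structure on the right-hand side corresponds to the Frobenius action $F(z\otimes s)=F(z)\otimes s^{p}$ on the left. Since $R^{\Gamma}$ is faithfully flat over $R$, tensoring the Lyubeznik filtration with $R^{\Gamma}$ yields a chain of $R^{\Gamma}\{F\}$-submodules of $H_{\m R^{\Gamma}}^{i}(R^{\Gamma})$ whose successive quotients are $(N_j^{(i)}/L_j^{(i)})\otimes_RR^{\Gamma}$ and $(L_j^{(i)}/N_{j-1}^{(i)})\otimes_RR^{\Gamma}$. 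The first kind remains $F$-nilpotent because $F^{e}=0$ is preserved under base change via the above formula for $F$. Applying Lemma \ref{gamma construction preserves simple} to each of the (finitely many) simple factors and intersecting the resulting cofinite subsets of $\Lambda$, we find a single $\Gamma$ cofinite in $\Lambda$ such that every $(L_j^{(i)}/N_{j-1}^{(i)})\otimes_RR^{\Gamma}$ is a simple $R^{\Gamma}\{F\}$-module with nontrivial $F$-action. Therefore the tensored chain is itself a Lyubeznik filtration of $H_{\m R^{\Gamma}}^{i}(R^{\Gamma})$.

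Now both claims follow from Proposition \ref{characterization of anti-nilpotency interms of Lyubeznik's filtration}. For (1), if $R$ has FH-finite length then each $N_j^{(i)}/L_j^{(i)}$ has finite length over $R$; since such a module is supported only at $\m$ and admits a composition series with quotients $R/\m$, tensoring with $R^{\Gamma}$ yields a module with a composition series of the same length whose quotients are the residue field $R^{\Gamma}/\m R^{\Gamma}$. Hence every nilpotent factor in the Lyubeznik filtration of $H_{\m R^{\Gamma}}^{i}(R^{\Gamma})$ has finite length over $R^{\Gamma}$, and Proposition \ref{characterization of anti-nilpotency interms of Lyubeznik's filtration}(1) gives that $R^{\Gamma}$ has FH-finite length. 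For (2), if $R$ is stably FH-finite, then by Theorem \ref{anti-nilpotency is equiv to stably FH finite} and Proposition \ref{characterization of anti-nilpotency interms of Lyubeznik's filtration}(2) each $N_j^{(i)}/L_j^{(i)}=0$, so after tensoring the nilpotent factors vanish as well; applying Proposition \ref{characterization of anti-nilpotency interms of Lyubeznik's filtration}(2) and Theorem \ref{anti-nilpotency is equiv to stably FH finite} in the other direction gives that $R^{\Gamma}$ is stably FH-finite.

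The only delicate point, and hence the main thing to verify, is that the base-changed filtration is genuinely a Lyubeznik filtration, i.e.\ that the simple factors remain simple over $R^{\Gamma}\{F\}$ with a nontrivial $F$-action. This is exactly the content of Lemma \ref{gamma construction preserves simple}, and since only finitely many simple factors appear across the filtrations of $H_{\m}^{0}(R),\ldots,H_{\m}^{d}(R)$, a common choice of sufficiently small cofinite $\Gamma$ handles all of them simultaneously.
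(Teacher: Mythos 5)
Your proposal is correct and follows essentially the same route as the paper: tensor the Lyubeznik filtration of each $H_\m^i(R)$ with the faithfully flat extension $R^\Gamma$, invoke Lemma \ref{gamma construction preserves simple} (for the finitely many simple factors, with a common sufficiently small $\Gamma$) to see the base-changed chain is again a filtration as in Theorem \ref{Lyubeznik's result}, and conclude via Proposition \ref{characterization of anti-nilpotency interms of Lyubeznik's filtration}. Your added verifications (flat base change for local cohomology, nilpotence and finite length of the factors being preserved) are details the paper leaves implicit, not a different argument.
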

\begin{proof}
By Theorem \ref{Lyubeznik's result}, for every $0\leq i\leq d=\dim R$, we have a filtration of $R\{F\}$-modules \[0= L_0\subseteq N_0\subseteq L_1\subseteq N_1\subseteq \cdots\subseteq L_s\subseteq N_s=H_\m^i(R)\] with each $N_j/L_j$ $F$-nilpotent and $L_j/L_{j-1}$ simple as an $R\{F\}$-module with nonzero $F$-action. By Lemma \ref{gamma construction preserves simple}, we can pick $\Gamma$ sufficiently small and cofinite in $\Lambda$ such that, for all $i$, all $L_j/L_{j-1}\otimes_RR^\Gamma$ are simple with nonzero $F$-action. Hence \[0= L_0^\Gamma\subseteq N_0^\Gamma\subseteq L_1^\Gamma\subseteq N_1^\Gamma\subseteq \cdots\subseteq L_s^\Gamma\subseteq N_s^\Gamma=H_\m^i(R^\Gamma)\] where $L_j^\Gamma=L_j\otimes_RR^\Gamma$ and $N_j^\Gamma=N_j\otimes_RR^\Gamma$ is a corresponding filtration of $H_\m^i(R^\Gamma)$. Now both (1) and (2) are clear from Proposition \ref{characterization of anti-nilpotency interms of Lyubeznik's filtration}.
\end{proof}

\begin{theorem}
\label{FH-finite localize}
Let $(R,\m)$ be a local ring that has FH-finite length (resp. is FH-finite or stably FH-finite). Then the same holds for $R_P$ for every $P\in \Spec{R}$.
\end{theorem}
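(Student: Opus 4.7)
The plan is to reduce to the case where $R$ is complete and $F$-finite, invoke Theorem \ref{FH-finite length is equiv to stably FH-finite on punctured spec} to conclude that every $R_Q$ with $Q\neq\m$ is stably FH-finite, and then descend to the original $R_P$ through a faithfully flat purely inseparable extension.

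For the reduction, by Lemma \ref{reduce to F-split} I may replace $R$ by $\widehat{R}$, and by Proposition \ref{FH-finite length preserved under gamma construction} I may further replace $\widehat{R}$ by $\widehat{R}^\Gamma$ for $\Gamma$ sufficiently small and cofinite in $\Lambda$, obtaining a complete and $F$-finite ring that still enjoys the given property. Proposition \ref{FH-finite length preserved under gamma construction} as stated only covers FH-finite length and stably FH-finite, so for the FH-finite case I first observe that FH-finite implies FH-finite length (any chain of $F$-compatible submodules stabilizes) and work with the latter, knowing that the final conclusion will be the stronger property stably FH-finite. Shrinking $\Gamma$ once more via Lemma \ref{gamma construction preserves prime} I also arrange that $P^\Gamma := P\widehat{R}^\Gamma$ is prime.

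Now, continuing to denote the complete $F$-finite ring by $R$ by abuse of notation, observe that each of the three hypotheses on $R$ implies $R$ has FH-finite length: this is clear for FH-finite length itself and for stably FH-finite (which forces every nilpotent factor in the Lyubeznik filtration to be zero, in particular of finite length, by Proposition \ref{characterization of anti-nilpotency interms of Lyubeznik's filtration}), while the FH-finite case was reduced to FH-finite length above. Thus Theorem \ref{FH-finite length is equiv to stably FH-finite on punctured spec} applies and shows that $R_Q$ is stably FH-finite for every $Q \in \Spec R - \{\m\}$. In particular $\widehat{R}^\Gamma_{P^\Gamma}$ is stably FH-finite; the case $P = \m$ is trivial since then $R_P = R$ already has the given property.

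It remains to descend from $\widehat{R}^\Gamma_{P^\Gamma}$ back to the original $R_P$ via the faithfully flat purely inseparable local extension $R_P \hookrightarrow \widehat{R}^\Gamma_{P^\Gamma}$. By flat base change, $H_{PR_P}^i(R_P)$ embeds Frobenius-equivariantly into $H_{P^\Gamma \widehat{R}^\Gamma_{P^\Gamma}}^i(\widehat{R}^\Gamma_{P^\Gamma})$, and extension of scalars sends distinct $F$-compatible submodules $V$ of the former to distinct $F$-compatible submodules $V \otimes_{R_P} \widehat{R}^\Gamma_{P^\Gamma}$ of the latter by faithful flatness; a short check then shows that each of FH-finite, FH-finite length, and anti-nilpotency (hence stably FH-finite) descends. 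Thus $R_P$ inherits stably FH-finite, and in particular whichever of the three properties $R$ had. The main obstacle is the FH-finite case, which neither Proposition \ref{FH-finite length preserved under gamma construction} nor the Lyubeznik filtration characterizes cleanly; this is bypassed by reducing to the FH-finite length setting at the start and reading off ``stably FH-finite $\Rightarrow$ FH-finite'' at the end.
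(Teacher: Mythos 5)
Your overall architecture coincides with the paper's (reduce all three properties to FH-finite length, build a complete $F$-finite model via the $\Gamma$-construction, apply Theorem \ref{FH-finite length is equiv to stably FH-finite on punctured spec}, then descend to $R_P$), but two steps in your reduction fail as written. First, $\widehat{R}^\Gamma$ is $F$-finite but \emph{not} complete: $A^\Gamma=\bigcup_e K^{\Gamma,e}[[x_1,\dots,x_d]]$ is a strictly increasing union of complete rings and is not complete, so neither is $\widehat{R}^\Gamma$ in general, and Theorem \ref{FH-finite length is equiv to stably FH-finite on punctured spec} genuinely requires completeness (its proof runs through Matlis and local duality via Theorem \ref{matlis dual induce equivalence}); you therefore cannot invoke it for $\widehat{R}^\Gamma$ directly. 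The repair is exactly the paper's: complete once more and work with $B=\widehat{\widehat{R}^\Gamma}$, which is complete, $F$-finite, still has FH-finite length by Lemma \ref{reduce to F-split}, and has maximal ideal $\m B$. Second, you cannot ``arrange that $P\widehat{R}^\Gamma$ is prime'' via Lemma \ref{gamma construction preserves prime}: that lemma applies to a prime of the complete ring $\widehat{R}$, whereas your $P$ lives in $R$, and $P\widehat{R}$ may fail to be prime for reasons no choice of $\Gamma$ can fix --- since $\Spec \widehat{R}^\Gamma\cong\Spec\widehat{R}$, if $P\widehat{R}$ has two minimal primes then so does $P\widehat{R}^\Gamma$ for every $\Gamma$. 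Again the fix is the paper's: for $P\neq\m$ pick any $Q\in\Spec B-\{\m B\}$ minimal over $PB$, so that $R_P\to B_Q$ is a flat local (hence faithfully flat) map with $PB_Q$ primary to $QB_Q$, and apply the punctured-spectrum theorem to $B$ at $Q$.

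With those corrections, your final descent is sound and is where you genuinely deviate from the paper: the paper completes $R_P$ and $B_Q$, notes $\widehat{R_P}\to\widehat{B_Q}$ is split, and applies Corollary \ref{FH-finite for split couple}, whereas you descend anti-nilpotency directly along the faithfully flat local map using flat base change $H^i_{PR_P}(R_P)\otimes_{R_P}B_Q\cong H^i_{QB_Q}(B_Q)$ together with purity of faithfully flat maps; the ``short check'' you wave at (distinct $F$-compatible submodules extend to distinct $F$-compatible $B_Q$-spans, and $y\otimes 1\notin V\otimes B_Q$ whenever $y\notin V$) is precisely what faithful flatness supplies, so this variant works and avoids the appeal to Proposition \ref{key prop}, at the cost of writing out that check. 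One more cosmetic slip: $R_P\to B_Q$ is not purely inseparable (the map $R\to\widehat{R}$ is not), but nothing in your argument uses that. Your opening reduction --- treating FH-finite by passing to FH-finite length and recovering the stronger conclusion ``stably FH-finite'' for $R_P$ with $P\neq\m$, the case $P=\m$ being trivial --- is the same as the paper's.
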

\begin{proof}
It suffices to show that if $(R,\m)$ has FH-finite length, then $R_P$ is stably FH-finite for every $P\neq \m$. We first notice that $\widehat{R}$ has FH-finite length by Lemma \ref{reduce to F-split}. We pick $\Gamma$ sufficiently small and cofinite in $\Lambda$ such that $\widehat{R}^\Gamma$ still has FH-finite length by Proposition \ref{FH-finite length preserved under gamma construction}. Now we complete again, and we get that $B=\widehat{\widehat{R}^\Gamma}$ is an $F$-finite complete local ring that has FH-finite length by Lemma \ref{reduce to F-split}, and the maximal ideal in $B$ is $\m B$. Notice that $R\rightarrow B$ is faithfully flat, hence for every $P\neq \m$, we may pick $Q\in \Spec {B}-\{\m B\}$ such that $R_P\rightarrow B_Q$ is faithfully flat (in particular, pure) and $P B_Q$ is primary to $Q B_Q$. So $\widehat{R_P}\rightarrow \widehat{B_Q}$ is split. By Theorem \ref{FH-finite length is equiv to stably FH-finite on punctured spec} applied to $B$, $B_Q$ is stably FH-finite. Hence so is $\widehat{B_Q}$ by Lemma \ref{reduce to F-split}. Now we apply Corollary \ref{FH-finite for split couple}, we see that $\widehat{R_P}$ is stably FH-finite. Hence so is $R_P$ by Lemma \ref{reduce to F-split}.
\end{proof}

\section{some examples}
Since stably FH-finite trivially implies $F$-injective, it is quite natural to ask whether FH-finite implies $F$-injective. The following example studied in \cite{MelandFlorianFrobenius} shows this does not hold in general.
\begin{example}[{\it cf.} Example 2.15 in \cite{MelandFlorianFrobenius}]
Let $R=K[[x,y,z]]/(x^3+y^3+z^3)$ where $K$ is a field of characteristic different from $3$. This is a Gorenstein ring of dimension $2$. And it can be checked that the only nontrivial $F$-compatible submodule in $H_\m^2(R)$ is its socle, a copy of $K$. Hence $R$ is FH-finite. But it is known that $R$ is $F$-pure (equivalently, $F$-injective since $R$ is Gorenstein) if and only if the characteristic of $K$ is congruent to $1$ mod $3$. Hence if the characteristic is congruent to $2$ mod $3$, we get an example of FH-finite ring which is not $F$-injective.
\end{example}

Another natural question to ask is whether the converse of Theorem
\ref{F-pure implies FH-finite} is true. The next example will show
this is also false in general. We recall a theorem in
\cite{SmithFRatImpliesRat}:
\begin{theorem}[{\it cf.} Theorem 2.6 in \cite{SmithFRatImpliesRat} or Proposition 2.12 in \cite{MelandFlorianFrobenius}]
\label{smith's theorem} Let $(R,\m)$ be an excellent Cohen-Macaulay
local ring of dimension $d$. Then $R$ is $F$-rational if and only if
$H_\m^d(R)$ is a simple $R\{F\}$-module.
\end{theorem}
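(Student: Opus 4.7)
The plan is to reduce to the case where $R$ is complete (faithful flatness of completion preserves Cohen-Macaulayness, $F$-rationality, and the $R\{F\}$-module structure on $H_\m^d(R)$), and then argue via the tight-closure characterization of $F$-rationality: for Cohen-Macaulay $R$, $R$ is $F$-rational if and only if $0^*_{H_\m^d(R)}=0$.

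For the easier direction $(\Leftarrow)$, I would observe that the tight-closure submodule $0^*_{H_\m^d(R)}$ is automatically $F$-compatible directly from its definition (if $cF^{e}(\eta)=0$ for all $e\gg 0$, then the same witness $c$ works for $F(\eta)$ with exponents shifted). The excellence hypothesis supplies a test element $c\in R^\circ$ by Hochster--Huneke, and a standard argument shows $0^*_{H_\m^d(R)}\subsetneq H_\m^d(R)$: the class of $[1/(x_1\cdots x_d)]$ in the direct-limit description of $H_\m^d(R)$ is not annihilated by any fixed $cF^{e}$. Simplicity of $H_\m^d(R)$ as an $R\{F\}$-module then forces $0^*_{H_\m^d(R)}=0$, which is precisely $F$-rationality for Cohen-Macaulay $R$.

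For the harder direction $(\Rightarrow)$, assume $R$ is $F$-rational and suppose for contradiction that $0\neq N\subsetneq H_\m^d(R)$ is an $F$-compatible submodule. Matlis-dualizing, $N$ corresponds to a proper nonzero Cartier submodule $K\subsetneq\omega_R$, where $\omega_R=H_\m^d(R)^\vee$ carries the Cartier structure dual to the Frobenius action on $H_\m^d(R)$. Since $F$-rational implies $F$-injective, $F$ acts injectively on $H_\m^d(R)$, so dually no nonzero Cartier subquotient of $\omega_R$ can be nilpotent; combining this with the Cartier-module filtration (Theorem \ref{Cartier module filtration}) forces $C$ to act surjectively on the nonzero piece $K$. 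The parameter test module $\tau(\omega_R)$ is, by definition, the unique smallest nonzero Cartier submodule on which $C$ is surjective, so $K\supseteq\tau(\omega_R)$. But $F$-rationality is equivalent to $\tau(\omega_R)=\omega_R$, giving $K=\omega_R$, a contradiction.

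The main obstacle is the $(\Rightarrow)$ direction, specifically upgrading ``$K$ is nonzero'' to ``$K$ contains $\tau(\omega_R)$'', which requires combining $F$-injectivity on the dual side with the Cartier-module structure theorem, and then invoking the characterization $\tau(\omega_R)=\omega_R\Leftrightarrow R\text{ is }F\text{-rational}$. The $(\Leftarrow)$ direction is essentially formal, using only the definition of tight closure, the existence of test elements supplied by excellence, and the simplicity hypothesis.
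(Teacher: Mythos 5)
The paper itself does not prove this statement (it is quoted from Smith and from Enescu--Hochster), so your argument has to stand on its own against the standard proof. Your $(\Leftarrow)$ direction is essentially right, with one small blemish: Hochster--Huneke test elements require $R$ reduced, which you do not yet know in that direction. Fortunately you do not need a test element at all: if $c\in R^\circ$ satisfied $cF^e\bigl([1/(x_1\cdots x_d)]\bigr)=0$ for all $e\gg 0$, then Cohen--Macaulayness gives $c\in\bigcap_e(x_1^{p^e},\dots,x_d^{p^e})=0$ by Krull intersection, a contradiction; so $0^*_{H^d_\m(R)}$ is a proper $F$-compatible submodule and simplicity forces it to vanish.

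The $(\Rightarrow)$ direction has a genuine gap at the step ``$F$ acts injectively on $H_\m^d(R)$, so dually no nonzero Cartier subquotient of $\omega_R$ can be nilpotent \dots\ forces $C$ to act surjectively on $K$.'' $F$-injectivity dualizes only to surjectivity of $C$ on $\omega_R$ itself; surjectivity of $C$ on \emph{every} nonzero Cartier submodule (equivalently, absence of nonzero nilpotent Cartier subquotients) is exactly the Matlis dual of anti-nilpotency of $H_\m^d(R)$, which is essentially the conclusion you are trying to reach, so the argument is circular at this point. You also cannot reach anti-nilpotency via Theorem \ref{F-split implies anti-nilpotency}, since $F$-rational rings need not be $F$-pure (Example \ref{F-pure is stronger than stably FH-finite}), and Example \ref{stably FH-finite is stronger than F-injective} exhibits a complete $F$-injective Cohen--Macaulay domain where your inference from $F$-injectivity fails. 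Similarly, the parameter test submodule is not ``by definition'' the smallest nonzero Cartier submodule on which $C$ is surjective; the property you actually invoke, namely that every nonzero Cartier submodule of $\omega_R$ contains $\tau(\omega_R)$, is precisely the Matlis dual of the key lemma in Smith's proof (every proper $F$-stable submodule of $H_\m^d(R)$ lies in $0^*_{H^d_\m(R)}$), so asserting it by fiat begs the question; proving it requires using that $F$-rational rings are normal domains (so any nonzero Cartier submodule agrees with $\omega_R$ generically), a fact you never use, and the Cartier-module machinery (Theorems \ref{matlis dual induce equivalence} and \ref{Cartier module filtration}) additionally needs $F$-finiteness, which completion alone does not provide. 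The standard repair is far more elementary and avoids Cartier modules entirely: after completing, $R$ is a normal domain, so for $0\neq N\subsetneq H_\m^d(R)$ $F$-stable, $N^{\vee}$ is a quotient of the rank-one torsion-free module $\omega_R$ by a nonzero submodule, hence torsion, so $cN=0$ for some $c\neq 0$; $F$-stability then gives $N\subseteq 0^*_{H^d_\m(R)}=0$, a contradiction.
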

\begin{corollary}
\label{F-rational implies stably FH-finite} Let $(R,\m)$ be an
excellent $F$-rational local ring of dimension $d$. Then $R$ is
stably FH-finite.
\end{corollary}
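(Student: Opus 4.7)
The plan is to reduce to checking anti-nilpotency of all local cohomology modules, which by Theorem \ref{anti-nilpotency is equiv to stably FH finite} is equivalent to stable FH-finiteness, and then to invoke Smith's theorem (Theorem \ref{smith's theorem}) to handle the only nonzero module.

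First I would use that $F$-rational implies Cohen-Macaulay, so that $H_\m^i(R) = 0$ for every $i \neq d$. These vanishing modules are trivially anti-nilpotent, so the entire question reduces to showing that $H_\m^d(R)$ is anti-nilpotent. By Smith's theorem, the $F$-rationality hypothesis tells us that $H_\m^d(R)$ is a simple object in the category of $R\{F\}$-modules.

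The key observation will then be that a simple $R\{F\}$-module $W$ with a nontrivial Frobenius action is automatically anti-nilpotent. Indeed, the only $F$-compatible submodules are $0$ and $W$; the quotient $W/W = 0$ is trivially one on which $F$ acts injectively, and for the quotient $W/0 = W$ itself, the kernel of $F$ on $W$ is an $F$-compatible submodule of $W$ and hence is either $0$ or $W$, and the latter is ruled out since the $F$-action is nontrivial. The only thing to verify is that the Frobenius action on $H_\m^d(R)$ is not identically zero; this follows either because $R$ is $F$-rational (hence $F$-injective, so $F$ acts injectively on $H_\m^d(R)$) or simply because an $R\{F\}$-module on which $F$ acts as zero is just an $R$-module, and $H_\m^d(R)$ is not simple as an $R$-module for a local ring of positive dimension.

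Combining these steps, every $H_\m^i(R)$ is anti-nilpotent, so by Theorem \ref{anti-nilpotency is equiv to stably FH finite} the ring $R$ is stably FH-finite. There is no real obstacle here; this is essentially a short bookkeeping argument built on top of Smith's theorem and the implication that $F$-rational excellent rings are $F$-injective and Cohen-Macaulay.
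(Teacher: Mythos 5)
Your proof is correct and takes essentially the same route as the paper: reduce to anti-nilpotency of the local cohomology modules via Theorem \ref{anti-nilpotency is equiv to stably FH finite}, invoke Smith's theorem to see $H_\m^d(R)$ is a simple $R\{F\}$-module, and note that a simple $R\{F\}$-module with nontrivial (indeed injective) Frobenius action is anti-nilpotent. You merely spell out the points the paper leaves implicit, namely that excellent $F$-rational rings are Cohen--Macaulay (so the other $H_\m^i(R)$ vanish) and that the $F$-action on $H_\m^d(R)$ is nontrivial.
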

\begin{proof}
This follows immediately from Theorem \ref{smith's theorem} and
Theorem \ref{anti-nilpotency is equiv to stably FH finite} because
when $H_\m^d(R)$ is a simple $R\{F\}$-module, it is obviously
anti-nilpotent.
\end{proof}
\begin{example}[{\it cf.} Example 7.15 in \cite{HochsterHunekeTightClosureOfParameterIdealsAndSplitting}]
\label{F-pure is stronger than stably FH-finite} Let $R=K[t, xt^4,
x^{-1}t^4, (x+1)^{-1}t^4]_{\m}\subseteq K(x,t)$ where $\m=(t, xt^4,
x^{-1}t^4, (x+1)^{-1}t^4)$. Then $R$ is $F$-rational but not
$F$-pure. Hence by our Corollary \ref{F-rational implies stably
FH-finite}, $R$ is a stably FH-finite Cohen-Macaulay ring that is
not $F$-pure.
\end{example}

Also note that even when $R$ is Cohen-Macaulay and $F$-injective, it
is not always FH-finite, and does not even always have FH-finite length. We have the following example:
\begin{example}[{\it cf.} Example 2.16 in \cite{MelandFlorianFrobenius}]
\label{stably FH-finite is stronger than F-injective} Let $k$ be an
infinite perfect field of characteristic $p>2$, $K=k(u,v)$, where
$u$ and $v$ are indeterminates, and let $L=K[y]/(y^{2p}+uy^p-v)$.
Let $R=K+xL[[x]]\subseteq L[[x]]$. Then $R$ is a complete
$F$-injective Cohen-Macaulay domain of dimension $1$ which is not
FH-finite. Notice that by Theorem \ref{anti-nilpotency is equiv to stably FH finite}, $R[[x]]$ is an $F$-injective Cohen-Macaulay domain of dimension $2$ that does not have FH-finite length (this was not pointed out in \cite{MelandFlorianFrobenius}).
\end{example}

\section*{Acknowledgement} I would like to thank Mel Hochster for many helpful and valuable discussions on the problem. I would also like to thank Alberto F. Boix, Florian Enescu, Karl Schwede, Rodney Sharp and Wenliang Zhang for their interest in this work and for their valuable comments. Finally I would like to thank the referee for suggesting the dual form of the main theorems which leads to Theorem \ref{dual form of main result}.

\bibliographystyle{skalpha}
\bibliography{CommonBib}

\end{document}